\def\cA{\mathcal A}
\newcommand{\cL}{\mathcal L}
\newcommand{\norm}[1]{\| #1\|}
\def\cA{          \mathcal A}
\def\cC{          \mathcal C}
\def\cD{          \mathcal D}
\def\Eu{           E^{u}}
\def\Es{           E^{s}}
\def\Ec{           E^{c}}
\def\cWu{          \mathcal W^{u}}
\def\cWs{          \mathcal W^{s}}
\def\clb{   \color{black}}
\let\cal\mathcal
\def \R{{\mathbb R}}
\def \Z{{\mathbb Z}}
\def \N{{\mathbb N}}
\newcommand{\T}{{\mathbb T}}
\newcommand{\prf}{{\begin{proof}}}
\newcommand{\epf}{{\end{proof}}}
\newcommand{\EV}{{\mathbb E}}
\newcommand{\PP}{{\mathbb P}}
\newcommand{\GG}{{\mathbb G}}
\DeclareMathOperator{\diff}{Diff}
\newtheorem{theo}{\sc Theorem}
\newtheorem{prop}{\sc Proposition}
\newcommand{\ary}{\begin{eqnarray}}
\newcommand{\eary}{\end{eqnarray}}
\newcommand{\aryst}{\begin{eqnarray*}}
\newcommand{\earyst}{\end{eqnarray*}}
\newcommand{\enmt}{\begin{enumerate}}
\newcommand{\eenmt}{\end{enumerate}}
\newtheorem{lemma}{\sc lemma}
\newtheorem{cor}{\sc corollary}
\newtheorem{conj}[theo]{\sc Conjecture}
\theoremstyle{definition}
\def\bee{\begin{equation}}
\def\eee{\end{equation}}
\newtheorem{defi}{\sc Definition}
\newtheorem{nota}{\sc Notation}
\theoremstyle{rema}
\newtheorem{rema}{\sc Remark}
\newcommand{\Diff}{\text{Diff}}
\newcommand{\pdvr}[2]
{\dfrac{\partial^{#2} #1}{\partial \theta^{#2_1} \partial r^{#2_2}}}
\newcommand{\pdvrs}[2]
{\partial^{#2} #1 /\partial \theta^{#2_1} \partial r^{#2_2}}
\newtheorem{thm}{Theorem}
\numberwithin{equation}{section}
\author{Zhiyuan Zhang}
\begin{document}

\title[Stable transitivity of group actions]{On stable transitivity of finitely generated groups of volume preserving diffeomorphisms}

\date{\today}

\maketitle

\begin{abstract} 
In this paper, we provide a new criterion for the stable transitivity of volume preserving finite generated group on any compact Riemannian manifold. As one of our applications, we generalised a result of Dolgopyat and Krikorian in \cite{DK} and obtained stable transitivity for random rotations on the sphere in any dimension. As another application, we showed that for $\infty \geq r \geq 2$, any $C^r$ volume preserving partially hyperbolic diffeomorphism $g$ on any compact Riemannian manifold $M$ having sufficiently H\"older stable or unstable distribution, for any sufficiently large integer $K$, for any $(f_i)_{i=1}^{K}$ in a $C^1$ open $C^r$ dense subset of $\textnormal{\Diff}^r(M,m)^K$, the group generated by $g, f_1,\cdots, f_K$ acts transitively.
\end{abstract}

\tableofcontents
\addtocontents{toc}{\protect\setcounter{tocdepth}{1}}

\section{Introduction}

Let $(M, d_M)$ be a smooth $d-$dimensional compact  Riemannian manifold endowed with the volume form $\mu$ given by the Riemannian metric.
We denote by $Homeo(M)$ ( resp. $Homeo(M, \mu)$ ) the set of ( resp. volume preserving ) homeomorphisms of $M$.
For any $r \in \R_{\geq 1} \bigcup \{\infty\}$, we denote by $\Diff^{r}(M)$ ( resp. $\Diff^{r}(M,\mu)$ ) the set of $r-$differentiable ( resp. volume preserving ) diffeomorphisms of $M$. 
Given an integer $m \geq 2$ and $m$ diffeomorphisms $f_1, \cdots, f_m \in \Diff^{1}(M)$, we are interested in the dynamics of the iterations of these $m$ diffeomorphisms.

Following the definitions in the previous literatures, we have the following.
\begin{defi}\label{defi basic stuff}
For any integer $m \geq 2$, $f_1,\cdots, f_m \in Homeo(M)$, we say that $\{f_i\}_{i=1}^{m}$ is an \textit{IFS (i.e. iterated function system)}.  
We say that $\{f_i\}_{i=1}^m$ is \textit{transitive} or equivalently, \textit{the semigroup generated by $\{f_i\}_{i=1}^m$ acts transitively} if there exists a point $x \in M$ such that for any open set $U \subset M$, there exists an integer $\ell \geq 1$, and a finite word $\omega = \omega_1\omega_2 \cdots \omega_{\ell} \in \{ 1,\cdots, m\}^{\ell}$ such that $f_{\omega_{\ell}} \cdots f_{\omega_1}(x) \in U$. If $\{f_i\}_{i=1}^m$ preserve a common invariant measure $\nu$, we say that $\{f_i\}_{i=1}^{m}$ is \textit{ergodic} with respect to $\nu$ if any $\{f_i\}_{i=1}^m-$invariant Borel set $A \subset M$ has $\nu-$measure $0$ or $1$. 
\end{defi}

Just as that any ergodic volume preserving homeomorphism on $M$ is transitive, any IFS $\{ f_i \}_{i=1}^{ m} \subset Homeo(M, \mu)$ is transitive if $\{f_i\}_{i=1}^{m}$ is ergodic with respect to $\mu$.

A general paradigm in the study of iterations of multiple diffeomorphisms is that transitivity, or even ergodicity is fairly generic in the volume preserving setting. This paper is an attempt to address this issue. We will mainly focus on volume preserving IFS.

\begin{defi}
For any  $1 \leq r \leq s \leq \infty$, an IFS $\{f_i\}_{i=1}^{m} \subset \textnormal{\Diff}^{s}(M, \mu)$ is said to be $C^r-$\textit{stably transitive} ( resp. $C^r-$\textit{stably ergodic} with respect to $\mu$ ) \textit{ in $\diff^{s}(M, \mu)$} if any IFS $\{g_i\}_{i=1}^{m} \subset  \Diff^{s}(M,\mu)$ such that  $g_i $ is sufficiently $C^r-$close to $f_i$ for all $1 \leq i \leq m$ is transitive ( resp. ergodic with respect to $\mu$ ).
\end{defi}

Precise conjectures are formulated in \cite{DK}, \cite{KN}.
\begin{conj}[in \cite{DK}]\label{conj 1}
Let $M$ be a compact Riemannian manifold and $r$ be a suffciently large number. Let $m \geq 2$ be an integer. Then $C^r-$stable ergodicity is open and dense among $m-$tuples of $C^r$ volume preserving diffeomorphisms of $M$.
\end{conj}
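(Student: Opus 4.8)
\emph{Proof proposal (for Conjecture~\ref{conj 1}).} This conjecture is far from routine — only special cases are within reach — so what follows is a strategy rather than a proof. The plan is to isolate a $C^1$-open, $C^r$-dense property $(\star)$ of $m$-tuples, of the type that the robust criterion of this paper is designed to verify, which is strong enough to force $\mu$-ergodicity, and then establish openness and density of $(\star)$ separately. The property $(\star)$ should assert, roughly, that the semigroup generated by $\{f_i\}$ contains an element possessing a \emph{volume-preserving horseshoe} $\Lambda$ whose stable (or unstable) lamination behaves like a \emph{blender} inside a box $B$, together with a finite family of words $\omega$ in $f_1,\dots,f_m$ for which the images $f_\omega(B)$ cover $M$. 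All three ingredients — the hyperbolic set, the blender property, and the covering — are persistent under $C^1$-small perturbations, so the set of $m$-tuples satisfying $(\star)$ is $C^1$-open, hence a fortiori $C^r$-open since $r \geq 1$; combined with the implication $(\star)\Rightarrow$ ergodicity this gives the openness half of the conjecture.

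For density, start from an arbitrary $m$-tuple $(f_1,\dots,f_m)$ of $C^r$ volume-preserving diffeomorphisms (take $\dim M \geq 2$; the circle case is handled separately). \textbf{Step 1.} By a $C^r$-small, $\mu$-preserving perturbation supported in a small ball, arrange that $f_1$ has a hyperbolic fixed point $p$ whose derivative is a fixed volume-preserving hyperbolic linear map; such linear maps exist in every dimension $d\ge 2$, e.g. $\mathrm{diag}(2,\tfrac12)$ for $d=2$ and $\mathrm{diag}(2,\dots,2,2^{-(d-1)})$ in general, and a standard local construction plants one without disturbing $\mu$. A further local perturbation produces a genuine horseshoe $\Lambda\ni p$. \textbf{Step 2.} Following the conservative analogue of the Bonatti--D\'iaz blender construction, perform another localized $C^r$-small $\mu$-preserving perturbation so that the stable set of $\Lambda$ acquires the blender property in a box $B$: every small $C^1$ disk of the appropriate dimension lying in $B$ meets $W^s(\Lambda)$, robustly. \textbf{Step 3.} Perturb $f_2,\dots,f_m$ ($C^r$-small, $\mu$-preserving) so that some finite collection of words in $f_1,\dots,f_m$ carries $B$ across all of $M$; this is possible because the pseudo-semigroup of local $C^r$ volume-preserving maps acts locally transitively, so one can successively push copies of $B$ along a fine net in $M$. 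After these perturbations the $m$-tuple satisfies $(\star)$, proving $C^r$-density.

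\textbf{From $(\star)$ to ergodicity.} This is the genuinely new content and, I expect, the main obstacle. Transitivity — indeed minimality of the forward semigroup action — follows quickly: any ball is eventually mapped into $B$ by the covering words, and the blender then spreads the iterates of $W^u(\Lambda)$ densely. Upgrading to $\mu$-ergodicity demands a Hopf-type mechanism adapted to semigroup actions with \emph{no} ambient partially hyperbolic structure: one must show that a $\{f_i\}$-invariant set $A$ of positive measure is ``blender-saturated'', and then exploit the exact mixing of the volume-preserving horseshoe element inside its local product structure, together with $\mu = f_i^*\mu$ for all $i$, to force $\mu(A)=1$. Making this saturation work purely from the blending property and $\mu$-invariance, without accessibility classes, is the crux; it is also where the hypothesis that $r$ be sufficiently large should enter, through distortion control for the hyperbolic element.

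\textbf{Expected difficulties.} Beyond the ergodicity upgrade, the second serious difficulty is that \emph{every} perturbation above must be $C^r$ and $\mu$-preserving simultaneously: there is no $C^1$ closing or connecting lemma to fall back on, so Steps 1--3 have to be realized by explicit local constructions, and in particular the covering in Step 3 cannot be obtained ``for free'' from a generic-orbit argument. For these reasons I would expect that the statement one can actually prove requires either extra structure on one generator (as in the partially hyperbolic application announced in the abstract, where $g$ supplies the hyperbolicity) or a restriction on $M$, with the conjecture in full generality remaining open.
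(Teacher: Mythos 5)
This statement is a \emph{conjecture} in the paper, not a theorem: the paper attributes it to Dolgopyat--Krikorian, explicitly says that to the author's knowledge it is open for every manifold of dimension at least two, and makes no attempt to prove it. There is therefore no ``paper's own proof'' to compare against. You clearly understand this — you frame your text as a strategy, not a proof, and you flag the two genuine obstructions (the ergodicity upgrade and the requirement that all perturbations be simultaneously $C^r$ and $\mu$-preserving). On that score your write-up is honest and your assessment of where the difficulty lies is correct.

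It is still worth comparing your proposed mechanism with what the paper actually develops, because they are quite different routes toward the same target and the contrast is instructive. Your scheme rests on planting a volume-preserving horseshoe with a blender inside one generator and covering $M$ by images of the blender box under words in the semigroup; this is a pointwise, geometric mechanism, and robustness comes from the structural stability of the horseshoe and the $C^1$-openness of the blender and covering properties. The paper's criterion (Proposition~\ref{prop main criterion}) is instead a probabilistic, averaged condition: $(n_0,\kappa_1,\kappa_2)$-uniformity asks that for \emph{every} $(x,E)$ the expected contraction transverse to $Df_\omega^{n_0}(x,E)$ beats the expected contraction inside it, with uniform constants. This averaged domination is what produces stable manifolds of uniformly bounded size (Proposition~\ref{prop uniform to uniform bound}), and then absolute continuity plus Poincar\'e recurrence force any closed invariant set to have zero or full volume (Proposition~\ref{main prop}). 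Notice that this last step gives only what the paper calls ``topological ergodicity'' — closed invariant sets are trivial — and the author is explicit that genuine $\mu$-ergodicity is out of reach by this method. That limitation is structurally the same obstruction you identify for the blender route: both approaches produce transitivity (and a bit more) cheaply, and both stall at the Hopf-type saturation argument needed to handle arbitrary positive-measure invariant sets without an accessibility or local product structure to lean on. Where they differ is in how robustness is obtained: your blender is $C^1$-open by construction but requires delicate $C^r$ conservative surgery to produce; the paper's uniformity is also $C^1$-open but is \emph{verified} rather than constructed, either from rapid mixing of random rotations (Theorem~\ref{thm generalization DK}) or from a dominated splitting on one fixed generator plus genericity of the others (Theorem~\ref{thm transitive group}, via the $\eta$-nontransversality of Proposition~\ref{prop unif to trans}). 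The paper thus trades generality for verifiability, which is exactly the compromise you anticipate in your closing paragraph.

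One small caution on your Step 3: ``pushing copies of $B$ along a fine net'' by perturbing $f_2,\dots,f_m$ is plausible heuristically, but for a \emph{fixed} $m$ it is not clear you have enough generators to realize a covering family of words after only $C^r$-small perturbations; the paper's analogous step (Theorem~\ref{thm transitive group}) takes the number of auxiliary maps $L$ to be large precisely so that a transversality/Hausdorff-dimension count (the $\Sigma_\delta$ argument at the end of Section~\ref{Proof of the main theorems}) closes. For Conjecture~\ref{conj 1} with $m=2$ this count would fail, so either a different density mechanism is needed or the covering must come from long words rather than many generators.
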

\begin{conj}[in \cite{KN}]\label{conj 2}
For any $r \in \N \bigcup \{\infty\}$, a $C^r$ generic pair of volume-preserving diffeomorphisms generates  a transitive iterated function system.
\end{conj}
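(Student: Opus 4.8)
The plan is to combine a Baire category reduction with a perturbation that installs, inside the IFS, a word to which the stable transitivity criterion of this paper applies.

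Fix a countable basis $\{U_n\}_{n\ge 1}$ of $M$ and for integers $m,n\ge 1$ put
\[
\mathcal{T}_{m,n}:=\Big\{(f_1,f_2)\in\Diff^{r}(M,\mu)^2:\ \exists\,\ell\ge 1,\ \omega\in\{1,2\}^{\ell},\ f_{\omega_\ell}\cdots f_{\omega_1}(U_m)\cap U_n\ne\emptyset\Big\}.
\]
Each $\mathcal{T}_{m,n}$ is open even in the $C^{0}$ topology, since for a fixed finite word $\omega$ the condition $f_\omega(U_m)\cap U_n\ne\emptyset$ persists under small perturbations of $f_1,f_2$. If every $\mathcal{T}_{m,n}$ is dense then $\mathcal{R}:=\bigcap_{m,n}\mathcal{T}_{m,n}$ is residual --- also when $r=\infty$, as $\Diff^{\infty}(M,\mu)$ is a Baire space --- and every $(f_1,f_2)\in\mathcal{R}$ generates a transitive IFS, because the set of points with dense forward orbit is $\bigcap_{n\ge1}\bigcup_{\omega}f_\omega^{-1}(U_n)$, a countable intersection of open dense sets. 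So the whole problem is the density of the $\mathcal{T}_{m,n}$. For $r=1$ this is immediate: by the theorem of Bonatti--Crovisier a $C^1$-generic conservative diffeomorphism is transitive, so $C^1$-generically $f_1$ alone already has a dense forward orbit and the IFS is transitive using only the words $1^{\ell}$. \emph{Hence assume $r\ge 2$ (or $r=\infty$), where the real content of the conjecture lies.}

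The naive route to density of $\mathcal{T}_{m,n}$ --- a connecting construction --- is blocked for $r\ge 2$. Indeed, if $f_\omega(U_m)$ never meets $U_n$ then $K:=\overline{\bigcup_\omega f_\omega(U_m)}$ is a proper closed set with $f_i(K)\subset K$ and $\mu(K)\ge\mu(U_m)>0$, and one would like to $C^r$-perturb so as to push a piece of $K$ across $\partial K$ and, iterating, into $U_n$; using Poincar\'e recurrence and pigeonholing over bounded-length words one reduces to a localized perturbation near a recurrent chain on $\partial K$, i.e. to an IFS version of the connecting lemma. But a $C^r$-small perturbation with $r\ge 2$ cannot displace a ball of radius $\rho$ by a distance comparable to $\rho$, and no $C^r$ analogue of Hayashi's $C^1$ connecting lemma is available --- indeed even the single-map question ($C^r$-generic transitivity of one conservative diffeomorphism) is open in this range. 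So a different mechanism is needed.

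The \textbf{main obstacle}, and the step I expect to be genuinely hard, is the following replacement for the connecting lemma. Given $(f_1,f_2)$ and $\varepsilon>0$, I would show that after an $\varepsilon$-$C^r$ perturbation one may assume the IFS contains a word $h=f_{\omega^{\ast}}$ that is partially hyperbolic with a sufficiently H\"older unstable distribution: build $h$ by a perturbation supported near a periodic orbit of a long word, using volume preservation to spread the center--unstable Lyapunov exponents (in the spirit of the $C^1$ results of Baraviera--Bonatti and Bochi--Viana, whose $C^r$ counterparts one would need to develop) and a H\"older graph transform to upgrade the regularity of $E^{u}$. One then applies the criterion of the present paper to the enlarged generating set $\{h,f_{\omega^{(1)}},\dots,f_{\omega^{(K)}}\}$ --- all of whose members are words in $f_1,f_2$, so that transitivity of this enlarged IFS implies transitivity of $\{f_1,f_2\}$ --- after a further generic perturbation placing the words $f_{\omega^{(j)}}$ in the relevant $C^1$-open $C^r$-dense subset (which requires choosing the $\omega^{(j)}$ so that $(f_1,f_2)\mapsto(f_{\omega^{(1)}},\dots,f_{\omega^{(K)}})$ behaves like a submersion, a point to be verified). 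This yields $C^r$-stable, a fortiori $C^r$-generic, transitivity and circumvents the connecting lemma entirely. The difficulty is concentrated in the first step: the $C^r$-generic creation of a H\"older partially hyperbolic word with no a priori hyperbolicity subsumes $C^r$-generic transitivity of a single diffeomorphism, and is at the current frontier. Realistic intermediate targets are low-dimensional $M$, the case where a dominated splitting is present from the start, or large but finite $r$ with explicit quantitative H\"older control.
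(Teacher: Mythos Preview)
The statement is Conjecture~\ref{conj 2}, which the paper explicitly records as \emph{open} in dimension $\ge 3$ (for surfaces it quotes Theorem~\ref{thmKN} from \cite{KN}). There is no proof in the paper to compare against; what you have written is a strategy outline, and you say so yourself. Your Baire reduction to density of the $\mathcal T_{m,n}$ and the $r=1$ case via Bonatti--Crovisier are fine and standard.

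Beyond the hard step you already flag --- producing by $C^r$-perturbation a word $h=f_{\omega^{\ast}}$ with a $\theta$-H\"older dominated distribution, $\theta>\tfrac{d}{d+1}$ --- there is a second genuine gap in the plan to invoke Theorem~\ref{thm transitive group}. That theorem \emph{fixes} $g$ and perturbs the remaining $L$ maps \emph{independently}: its proof builds a family $(f_i(B,\cdot))_{i=1}^{L}$ with $L|\mathcal A|$ free parameters and runs a submersion/transversality count against a target $Gr(M,\ell)^{L}$ of dimension $Ld_{\ell}$ (see \eqref{construct V}--\eqref{lower det}). In your setting all of $h,f_{\omega^{(1)}},\dots,f_{\omega^{(K)}}$ are words in the \emph{same} pair $(f_1,f_2)$, so perturbing $(f_1,f_2)$ moves them in a correlated way and moves $h$ --- hence the distribution $E_1$ --- as well. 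The assertion that $(f_1,f_2)\mapsto(f_{\omega^{(1)}},\dots,f_{\omega^{(K)}})$ ``behaves like a submersion'' is not a point to be verified but a real problem: a localized perturbation of $f_1$ near a point $p$ affects every word whose itinerary visits $p$, while the $\eta$-nontransversality condition of Proposition~\ref{prop unif to trans} must hold at \emph{every} $(x,E)\in Gr(M,\ell)$, not along one orbit. Making this work would require choosing the $\omega^{(j)}$ so that, uniformly in $(x,E)$, their itineraries decouple enough to recover $K$ effectively independent directions of perturbation, while simultaneously keeping the pinching of $h$ (cf.\ Corollary~\ref{cor pinch}) intact. That is a substantial additional obstacle on top of the one you identify, not a routine check.
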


It is straightforward to see that the transitivity ( resp. ergodicity ) of an IFS is implied by the transitivity ( resp. ergodicity ) of one of the maps in the IFS. The converse implication is however false : one can take two translations on $\T^2$, defined by $f_1(x,y) = (x+\alpha, y)$, $f_2(x,y) = (x, y+\beta)$, with $\alpha,\beta$ rationally independent. Then both $f_1,f_2$ are not transitive, while their product $f_1f_2$ is ergodic with respect to the volume. 

In both conjectures, we do not require the diffeomorphisms to be orientation preserving. When $M$ is a circle with Lebesgue measure, the above conjectures are easy exercises. However, when $M$ equals to any given surface, the above conjectures already require considerable new ideas ( see \cite{KN} ). To the best of the author's knowledge, Conjecture \ref{conj 1} is open for any manifold of dimension at least $2$, and Conjecture \ref{conj 2} is open for any manifold of dimension at least $3$. 

In \cite{DK}, the authors showed that any IFS $\{f_i\}_{i=1}^m$ consisted of $C^{\infty}$ volume preserving diffeomorphisms of the $d-$dimensional sphere $\mathbb{S}^d$ ( $d \geq 2$ )  which are $C^r$ close to  a set of topological generators of $\mathbb{SO}_{d+1}$  for $r \gg 1$, can be simultanenously conjugate to rotations by a $C^{\infty}-$diffeomorphism, if there exists at least one stationary measure  for $\{f_i\}_{i=1}^m$  with vanishing Lyapunov exponents. As a consequence, they showed the following interesting stable ergodicity result.
\begin{thm}[ Corollary 2 in \cite{DK} ]\label{thmDK}
If $d \geq 2$ is even and $R_1,\cdots, R_m$ topologically generate $\mathbb{SO}_{d+1}$, then the system $\{R_{i}\}_{i=1}^{m}$ is $C^r-$stably ergodic with respect to the volume in $\diff^{\infty}(\mathbb{S}^d, Vol)$  for sufficiently large $r \geq 1$. Namely, there exist $r_0 \geq 1$, $\epsilon > 0$ such that if for each $1 \leq i \leq m$, $f_{i} \in \diff^{\infty}(\mathbb{S}^{d}, Vol)$ and $d_{C^{r_0}}(f_{i}, R_{i}) < \epsilon$, then the IFS $\{f_{i}\}_{i = 1}^m$ is ergodic with respect to the volume form.
\end{thm}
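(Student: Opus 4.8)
The plan is to deduce Theorem \ref{thmDK} from the main rigidity result of \cite{DK} cited immediately above, together with the new stable transitivity criterion developed in this paper. First I would recall the dichotomy supplied by \cite{DK}: for $r$ large, an IFS $\{f_i\}_{i=1}^m$ that is $C^{r_0}$-close to topological generators $R_1,\dots,R_m$ of $\mathbb{SO}_{d+1}$ either has \emph{no} stationary measure with all Lyapunov exponents zero, or it is simultaneously $C^\infty$-conjugate to an IFS of genuine rotations. In the latter case the conjugated system is $C^{r_0}$-close to $\{R_i\}$ (shrinking $\epsilon$), and since the $R_i$ topologically generate $\mathbb{SO}_{d+1}$ — an open condition in the $m$-tuple — the conjugated rotations still topologically generate $\mathbb{SO}_{d+1}$; Haar measure on the sphere is then ergodic for the resulting rotation IFS, hence so is the volume for $\{f_i\}$ by invariance of ergodicity under $C^\infty$ conjugacy. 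So the whole problem reduces to excluding the first alternative.

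The heart of the matter, and the place where the even-dimensionality of $d$ and the results of this paper enter, is to rule out the existence of a stationary measure with vanishing Lyapunov exponents. Here I would invoke the stable transitivity machinery: a system with nonzero exponents on some stationary measure produces, via Pesin-theoretic and invariance-principle arguments, enough expansion/contraction to force transitivity; conversely, the obstruction to applying the criterion directly is exactly a stationary measure that is "rotation-like." The key observation for even $d$ is topological: $\mathbb{S}^{d}$ with $d$ even has nonzero Euler characteristic, so no continuous line field or nontrivial invariant splitting of the tangent bundle exists, which obstructs the kind of degenerate behaviour that a zero-exponent stationary measure would impose. I would make this precise by arguing that a zero-exponent stationary measure, combined with the Ledrappier invariance principle, yields a measurable invariant conformal structure or a reduction of structure group that, when $\chi(\mathbb{S}^d)\neq 0$, contradicts either the dynamics being close to irreducible rotations or a cohomological/index obstruction. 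Combined with an accessibility/transitivity statement from the paper's main theorem, this upgrades transitivity to ergodicity.

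The main obstacle I expect is the zero-exponent case: showing that the only stationary measures $C^{r_0}$-close to the rotation-invariant volume with all Lyapunov exponents vanishing force the conjugacy-to-rotations scenario, rather than some exotic intermediate behaviour. This is essentially the content one borrows wholesale from \cite{DK}, so the real work on this paper's side is verifying that the hypotheses of that rigidity theorem are met (sufficient regularity $r\geq r_0$, closeness in $C^{r_0}$, and that $\{R_i\}$ are topological generators), and then checking the two routine-but-not-trivial stability points: that topological generation of $\mathbb{SO}_{d+1}$ is $C^0$-open among $m$-tuples, and that ergodicity of an IFS is preserved under smooth conjugacy. I would assemble the argument as: (i) quote the \cite{DK} dichotomy; (ii) in the conjugate-to-rotations branch, conclude ergodicity from ergodicity of generic rotation IFS on $\mathbb{S}^d$ plus conjugacy-invariance; (iii) in the zero-exponent branch, use $\chi(\mathbb{S}^d)\neq 0$ (here even $d$ is used) together with the invariance principle to show this branch is empty unless it falls into (ii); (iv) fix $r_0,\epsilon$ uniformly from the finitely many closeness requirements. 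The clean write-up is short precisely because the technical core is imported; the delicate point to get right is the uniformity of $r_0$ and $\epsilon$ across all these steps.
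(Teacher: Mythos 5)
This statement is quoted verbatim as Corollary~2 of \cite{DK} and the paper offers no proof of it; it is an imported result, so the appropriate comparison is against \cite{DK}'s own argument, which the surrounding text of the paper summarizes. Your reconstruction gets the overall dichotomy right (zero-exponent stationary measure $\Rightarrow$ smooth conjugacy to rotations, and the conjugate branch is handled via ergodicity of the rotation action), but it then misidentifies both \emph{which} branch requires work and \emph{how} even dimensionality enters, so the core of your argument would not go through.

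Concretely: you say ``the whole problem reduces to excluding the first alternative,'' i.e.\ excluding the case of no zero-exponent stationary measure. That case cannot be excluded --- it is the generic one, and it is precisely the case that must be \emph{handled}, not ruled out. In \cite{DK} that branch is treated by showing the random dynamics admits a uniform splitting, and --- as the introduction of the present paper spells out --- when $d$ is even the arithmetic structure of the Lyapunov spectrum of random rotations on $\mathbb{S}^d$ forces all exponents to be nonzero, so the splitting is genuinely hyperbolic (stable plus unstable, no center). Ergodicity then follows from a Hopf-type argument. The role of $d$ even is therefore a linear-algebraic/arithmetic fact about the spectrum, not a topological one. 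Your proposed replacement --- invoking $\chi(\mathbb{S}^d)\neq 0$ to forbid an invariant splitting --- does not apply: the Pesin-theoretic splitting in this branch is only \emph{measurable}, and the Euler class obstructs continuous plane fields, not measurable ones. You also lean on ``the paper's main theorem'' to supply transitivity here, but Theorem~\ref{thmDK} is a 2007 result of Dolgopyat--Krikorian cited in the introduction; it cannot and does not depend on the present paper's machinery (whose point, as stated explicitly, is precisely to extend the conclusion to odd $d$, where zero exponents may appear and the Hopf argument fails). So the missing ingredient in your sketch is the Hopf argument, and the even-dimension input needs to be the nonvanishing of the random-rotation Lyapunov spectrum, not an Euler characteristic obstruction.
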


Much is known about the topological generators of semisimple Lie groups, such as $\mathbb{SO}_{d}, d \geq 3$. For example, by \cite{Au}, for any $d \geq 3$, pairs generating $\mathbb{SO}_{d}$ form an open and dense subset of $\mathbb{SO}_{d}^2$.

In \cite{KN}, the authors proved Conjecture \ref{conj 2} for compact orientable surfaces. 
\begin{thm}\label{thmKN}
Let $r \in \N \bigcup \{\infty\}$, and let $S$ be a compact orientable surface with a smooth volume form $\mu$. There is a residual set $\cal{R} \subset \textnormal{\Diff}^{r}(S,\mu)\times \textnormal{\Diff}^r(S,\mu)$ in the product $C^r$ topology such that if $(f,g) \in \cal{R}$ then the $IFS$ $\{f,g\}$ is transitive.
\end{thm}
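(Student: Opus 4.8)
The plan is to obtain transitivity as a residual property via the Baire category theorem. Fix a countable basis $\{U_n\}_{n\ge 1}$ of nonempty open subsets of $S$, and for each ordered pair $(i,j)$ let $\mathcal T_{i,j}\subset\Diff^r(S,\mu)^2$ be the set of pairs $(f,g)$ for which there is a finite nonempty word $w$ in the symbols $f,g$ with $f_w(U_i)\cap U_j\ne\emptyset$, where $f_w$ denotes the associated composition. Each $\mathcal T_{i,j}$ is open, since the displayed condition is open in $(f,g)$. If each $\mathcal T_{i,j}$ is also dense, then $\mathcal R:=\bigcap_{i,j}\mathcal T_{i,j}$ is residual, and for $(f,g)\in\mathcal R$ every open set $W_n:=\bigcup_w f_w^{-1}(U_n)$ is dense (because $U_m\cap W_n\ne\emptyset$ is precisely the statement $(f,g)\in\mathcal T_{m,n}$), so $\bigcap_n W_n$ is a dense $G_\delta$ consisting of points whose forward semigroup-orbit is dense; in particular $\{f,g\}$ is transitive. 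Thus the whole theorem reduces to the following: given $(f,g)$, indices $i,j$, and $\epsilon>0$, there are $f',g'\in\Diff^r(S,\mu)$ that are $\epsilon$-$C^r$-close to $f,g$ and a word $w$ with $f'_w(U_i)\cap U_j\ne\emptyset$. Write $U=U_i$, $V=U_j$.

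First I would install two hyperbolic saddles. By the known genericity theorems for area-preserving surface diffeomorphisms — generic existence of hyperbolic periodic points, each carrying a transverse homoclinic orbit, in the circle of ideas of Newhouse, Pixton and Oliveira — after an $\epsilon/3$-perturbation, still called $f$, the map has a hyperbolic saddle periodic point $p$ of some period $\tau$ with a transverse homoclinic orbit, hence a horseshoe $\Lambda\ni p$, and $W^u(p)$ is a non-compact immersed curve. By a second $\epsilon/3$-perturbation of $f$, supported in a small ball $B$ with $\overline{B}\subset V$ chosen disjoint from $\Lambda$, I would also create a hyperbolic saddle periodic point $p'\in B$ of period $\tau'$ with $\overline{W^u_{loc}(p')}\subset B\subset V$; call the result $f$ as well. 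From then on only $g$ is perturbed, by modifications localized in balls chosen pairwise disjoint and disjoint from a neighborhood of $\Lambda\cup W^s_{loc}(p)\cup\overline{W^u_{loc}(p')}$ and of the chosen homoclinic orbit, so that $f$, $p$, $p'$ and all the local hyperbolic pictures are preserved.

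Then comes the connecting step: using a finite cascade of such localized, area-preserving perturbations of $g$, with pairwise disjoint supports so that the total $C^r$-size stays below $\epsilon/3$, I would arrange (a) that for some word $w_1$ the open set $f_{w_1}(U)$ contains an arc crossing $W^s(p)$ transversally, and (b) that for some word $w_2$ the curve $f_{w_2}(W^u(p))$ crosses $W^s(p')$ transversally, say at $f_{w_2}(q)$ with $q\in W^u(p)$. Granting (a) and (b), the connecting word is assembled by two applications of the inclination ($\lambda$-)lemma. Choosing a compact sub-arc $D\subset W^u(p)$ that contains $q$ and is long enough, the $\lambda$-lemma for $f^{\tau}$ gives, for all large $n$, an arc $D_1\subset f^{n\tau}(f_{w_1}(U))$ that is $C^1$-close to $D$; then $f_{w_2}(D_1)$ is $C^1$-close to the sub-arc $f_{w_2}(D)$ of $f_{w_2}(W^u(p))$ through $f_{w_2}(q)$, so for $n$ large enough it still crosses $W^s(p')$ transversally there. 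Applying the $\lambda$-lemma for $f^{\tau'}$ to a transverse sub-arc of $f_{w_2}(D_1)$, for all large $m$ the set $f^{m\tau'}(f_{w_2}(D_1))$ contains an arc $C^1$-close to a compact sub-arc of $W^u_{loc}(p')$ whose closure lies in $V$, hence meeting $V$. Since this arc lies in $f_w(U)$ for $w=f^{m\tau'}\,w_2\,f^{n\tau}\,w_1$, one gets $f_w(U)\cap V\ne\emptyset$, and all the intersections used are transverse crossings or meetings of open sets and hence robust; so $(f,g')\in\mathcal T_{i,j}$, which establishes density.

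The hard part will be the connecting step — precisely, realizing (a) and (b): forcing, by arbitrarily small area-preserving perturbations, a semigroup word that carries $U$ across $W^s(p)$ and $W^u(p)$ across $W^s(p')$. A single localized perturbation moves the image of a point only slightly, so one cannot simply drag the orbit of $q$ along an arc into $W^s(p')$; one must instead exploit that $g$ is area-preserving, so that the semigroup-orbit of $W^u(p)$ cannot be confined to a proper invariant open set without that set leaking under an arbitrarily small perturbation, together with the two-dimensionality of $S$, so that unstable manifolds are curves the $\lambda$-lemma can spread and general position of arcs suffices for transversality. This is what forces the cascade of disjoint-support perturbations and is the technical core of \cite{KN}. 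A further difficulty is that for $r\ge 2$ no connecting lemma is available, so these connections must be built by hand and uniformly in $r$ (including $r=\infty$); this is exactly what makes the surface case of Conjecture \ref{conj 2} substantially deeper than the circle case.
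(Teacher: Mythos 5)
The paper you are reading does not prove Theorem~\ref{thmKN} at all: it states it as background, explicitly attributing it to Koropecki and Nassiri \cite{KN}, and only uses it to motivate Conjecture~\ref{conj 2} and to contrast with the methods developed here (which are applied to Theorems~\ref{thm generalization DK} and~\ref{thm transitive group}, not to Theorem~\ref{thmKN}). So there is no proof in this paper to compare your proposal against; any comparison would have to be with the argument in \cite{KN} itself, which is not reproduced here.

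On the merits of your proposal taken in isolation: the Baire-category reduction (openness of $\mathcal T_{i,j}$ via robustness of a single witnessing word, density of each $\mathcal T_{i,j}$ as the target) is correct and standard, and your step of first making $f$ generic so as to install a horseshoe and then a local saddle inside $V$ is reasonable, modulo citing the appropriate genericity theorems (existence of hyperbolic periodic points and genericity of transverse homoclinic points for conservative surface diffeomorphisms). But your steps (a) and (b) --- forcing, by arbitrarily small area-preserving perturbations of $g$ alone, some semigroup word to carry $U$ across $W^s(p)$ and another to carry $W^u(p)$ across $W^s(p')$ --- are not a reformulation of a routine lemma; they essentially restate the density you are trying to establish, localized to stable/unstable curves instead of open sets. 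The heuristic you invoke, that the forward $\{f,g\}$-saturation of $U$ cannot stay a proper invariant open set without ``leaking'' under small perturbations, is not something one can use directly: the saturated set changes discontinuously with $(f,g)$, and for $r\ge2$ there is no conservative connecting lemma to make an orbit hit a prescribed target. You flag this yourself, but it means the proposal, as written, reduces the theorem to an unproved statement whose resolution is the entire technical content of \cite{KN}; it is a plausible outline, not a proof.
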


Compared to Theorem \ref{thmDK}, the proof of Theorem \ref{thmKN} is mostly by topological arguments. So far Conjecture \ref{conj 1}, namely the density of stable ergodicity, is still unknown for surface diffemorphisms. We mention another line of research which focus on the classification of ergodic stationary measures for random dynamics. A recent breakthrough in this direction is obtained by Brown-Rodriguez Hertz in \cite{BRH}, for random dynamics of $C^2$ surface diffeomorphisms. 

Besides its intrinsic interest, one can find applications of IFS based arguments in the study of the stable ergodicity conjecture for partially hyperbolic systems, and the instability problem in Hamiltonian systems. We refer the readers to the discussions in \cite{ACW, DK,KN,NP}.

The problem of $C^1$ generic transitivity is known even for a single diffeomorphism. More precisely, Bonatti and Crovisier in \cite{BC} showed that a $C^1$ generic conservative diffeomorphism ( in any dimension) is transitive. The analogous statement is false for $C^r$ with sufficiently large $r$ by $KAM$ theory. This indicates that the validity of the above conjectures would necessarily rely on features specific to multiple diffeomorphisms.

In this paper, we provide a criterion of transitivity ( Propositions \ref{prop main criterion} ) for multiple diffeomorphisms. As one application, 
we partially generalise the result in \cite{DK} to any dimension and obtain the following.
\begin{thm} \label{thm generalization DK}
Given an integer $d \geq 2$, there exists a number $r_0 > 1$ such that for any integer $m \geq 2$, for any set of rotations $R_{1},\cdots, R_{m}$ in $\mathbb{SO}_{d+1}$ such that $R_{1}, \cdots, R_{m}$ topologically generate $\mathbb{SO}_{d+1}$, there exists $\epsilon > 0$ such that any $\{ f_i \}_{i=1}^{m} \subset \diff^{\infty}(\mathbb{S}^d, Vol)$ with $\max_{i}d_{C^{r_0}}(R_{i}, f_{i}) < \epsilon$, any closed $\{f_{i}\}_{i=1}^m-$invariant set has zero or total Lebesgue measure. In particular, the IFS $\{f_{i}\}_{i=1}^m$ is transitive. 
\end{thm}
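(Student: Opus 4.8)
\textbf{Proof proposal for Theorem \ref{thm generalization DK}.}

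The plan is to derive Theorem \ref{thm generalization DK} as an application of the general transitivity criterion (Proposition \ref{prop main criterion}) combined with the structural results of \cite{DK} on random rotations. The key observation is that since $R_1,\dots,R_m$ topologically generate $\mathbb{SO}_{d+1}$, the random walk they define on $\mathbb{S}^d$ is uniquely ergodic with the Lebesgue measure as its unique stationary measure, and all its Lyapunov exponents vanish. The perturbative analysis of \cite{DK} shows that for $r_0$ large enough and $\epsilon$ small enough, an IFS $\{f_i\}_{i=1}^m$ that is $C^{r_0}$-close to $\{R_i\}$ either has a stationary measure with a nonzero Lyapunov exponent, in which case one is in a ``hyperbolic'' regime and strong mixing-type properties hold, or else all stationary measures have vanishing exponents, in which case the $f_i$ are simultaneously smoothly conjugate to rotations (the local rigidity theorem of \cite{DK}) and the conclusion is immediate since rotations of $\mathbb{S}^d$ by elements topologically generating $\mathbb{SO}_{d+1}$ act minimally, hence every closed invariant set is empty or all of $\mathbb{S}^d$.

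First I would reduce to the case in which some stationary measure has a nonzero exponent. In that regime, I would invoke the criterion of Proposition \ref{prop main criterion}: the point is to verify its hypotheses, which (from the structure of the excerpt) should amount to checking a local accessibility or ``expansion along some orbit'' condition together with the absence of nontrivial closed invariant sets of intermediate measure. Concretely, a nonzero Lyapunov exponent for a stationary measure $\nu$ produces, via Pesin theory for random products (or the Ledrappier--Young / random Oseledets machinery), stable and unstable manifolds for $\nu$-typical orbits; one then uses the minimality-up-to-measure-zero of the underlying rotation walk to propagate these invariant manifolds, obtaining that any closed $\{f_i\}$-invariant set $A$ with $0 < \mathrm{Leb}(A) < 1$ would have to contain, and be disjoint from, pieces of the same unstable leaf — a contradiction. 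This is exactly the mechanism by which Proposition \ref{prop main criterion} should be designed to conclude ``any closed invariant set has zero or total measure''.

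The main obstacle I anticipate is the non-uniformity of the hyperbolicity: the nonzero exponent is only guaranteed for some stationary measure, not uniformly over $\mathbb{S}^d$, so the stable/unstable manifolds exist only on a positive-measure (not open) set, and their sizes are controlled only by a tempered (slowly varying) function along orbits. Making the propagation argument work therefore requires a careful Pesin-block / Lusin-set argument: one restricts to a compact set of almost-full measure on which manifold sizes are uniformly bounded below, then uses a density point of $A$ together with the recurrence of the random walk to that compact set to bring a definite-size unstable plaque into a neighborhood of the density point. Closing this argument — i.e. showing a closed invariant set of intermediate measure is genuinely impossible and not merely improbable — is where the quantitative content of Proposition \ref{prop main criterion} must be used, and I expect this to be the technical heart of the reduction.

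Finally, I would assemble the pieces: choose $r_0$ and $\epsilon$ so that the dichotomy of \cite{DK} applies; in the vanishing-exponent branch use smooth conjugacy to rotations and minimality of the rotation action (which holds precisely because $R_1,\dots,R_m$ topologically generate the compact group $\mathbb{SO}_{d+1}$, so the only closed invariant subsets of the homogeneous space $\mathbb{S}^d = \mathbb{SO}_{d+1}/\mathbb{SO}_d$ are $\emptyset$ and $\mathbb{S}^d$); in the nonzero-exponent branch apply Proposition \ref{prop main criterion}. In both cases every closed $\{f_i\}_{i=1}^m$-invariant set has zero or total Lebesgue measure, and transitivity follows since the complement of a maximal proper closed invariant set would otherwise be a nonempty open invariant set of full measure whose orbit closure is all of $\mathbb{S}^d$, giving the required dense orbit. \carre
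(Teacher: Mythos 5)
Your high-level plan matches the paper's: invoke the dichotomy of \cite{DK} (linearizability vs.\ hyperbolicity), dispose of the linearizable branch by the group-theoretic structure of rotations, and apply Proposition~\ref{prop main criterion} in the remaining branch. The linearizable branch is handled slightly differently --- the paper cites ergodicity from \cite{DK}, you argue minimality directly --- but both are fine.

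The genuine gap is in your reduction to Proposition~\ref{prop main criterion}. You phrase the hyperbolic branch as \lq\lq some stationary measure has a nonzero Lyapunov exponent\rq\rq\ and then correctly flag that this by itself cannot suffice, since the resulting Pesin stable manifolds only have tempered, non-uniform lower bounds on their size; you then defer to \lq\lq the quantitative content of Proposition~\ref{prop main criterion}\rq\rq\ without actually closing the gap. But the hypothesis of Proposition~\ref{prop main criterion} is not an accessibility condition or a statement about stationary measures --- it is the $(n_0,\kappa_1,\kappa_2)$-\emph{uniformity} of Definition~\ref{def what is uniform}, a uniform averaged contraction/expansion estimate holding at \emph{every} $(x,E)\in Gr(M,d-b)$, not merely at $\nu$-typical points. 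As the paper's introduction stresses, existence of a nonzero exponent for a stationary measure does \emph{not} imply such a uniform splitting, so the passage from your hypothesis to the proposition's hypothesis is exactly the step your sketch cannot supply and that a blind argument cannot conjure from Pesin theory alone. The paper instead observes that the dichotomy of \cite{DK} is sharper than you assume: in the non-linearizable branch, the rapid-mixing estimate (specifically formula (45) in the proof of Corollary 4(a) of \cite{DK}, taken with $r=j=d$) directly yields $(n_0,\kappa_1,\kappa_2,1)$-uniformity with $\kappa_1=\lambda_1-\varepsilon$ and $\kappa_2=\frac{d-3}{d-1}\lambda_1+\varepsilon$. That quantitative input from \cite{DK} is what bridges the gap you identified; without it your proposal stops short of a proof.
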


The proof of Theorem \ref{thm generalization DK} is built upon the works in \cite{DK}. The main technical result in \cite{DK} is a dichotomy, which asserts that under the condition of Theorem \ref{thm generalization DK}, either one can linearise the dynamics to rigid rotations, or there is a \lq\lq uniform splitting \rq\rq for the associated random dynamics ( the formal definition is in Definition \ref{def what is uniform}). A similar dichotomy was obtain by Avila-Viana \cite{AM} for \lq\lq smooth cocycles  \rq\rq over hyperbolic homeomorphisms, such as the shift map on $\{1,\cdots, m\}^{\Z}$ ( see Example 2.1 in \cite{AM} ). As an application of the main result in \cite{AM}, any IFS $\{f_i\}_{i=1}^{m} \subset \diff^{1}(M)$, volume preserving or not, has vanishing Lyapunov exponents for a stationary measure only if such stationary measure is a common invariant measure for $\{f_i\}_{i=1}^m$. However the presence of positive Lyapunov exponents does not in general imply the existence of a uniform splitting, such as the one defined in this paper. In the case of Theorem \ref{thm generalization DK}, such uniform splitting is obtained as a consequence of the \lq\lq rapidly mixing \rq\rq of the joint actions of random rotations on $\mathbb{S}^d, d \geq 2$. We refer the readers to  \cite{DK} Section 4 and \cite{Do2} for more on rapidly mixing dynamics.

In \cite{DK}, the authors exploited the above uniform splitting in even dimensions, in which case there is no zero Lyapunov exponent. By using a Hopf-type argument, they conclude ergodicity. While in odd dimensions, the possibility of having a zero Lyapunov exponent prevented the use of such Hopf-type argument.  The main technical result in our paper shows that such a uniform splitting ( regardless of the dimension ) always imply transitivity.

We note that our criterion is $C^1-$robust in the presence of some additional structure called uniformly dominated splitting ( Definition \ref{def partially hyperbolic} ). As an application, we verify a weaker version of Conjecture 2 in certain subspaces of tuples of multiple diffeomorphisms. 

\begin{thm} \label{thm transitive group}
Let $M$ be a compact Riemannian manifold of dimension $d \geq 2$ with volume form $\mu$ induced by the metric. Let $r \in \R_{\geq 2} \bigcup \{ \infty \}$, and $\theta \in (\frac{d}{d+1},1)$. Let $g \in \textnormal{\Diff}^{r}(M, \mu)$ be a diffeomorphism admitting a uniformly dominated splitting $TM = E_1 \oplus E_2$ ( Definition \ref{def partially hyperbolic} ) such that either $E_1$ or $E_2$ is $\theta-$H\"older ( in particular for any volume preserving partially hyperbolic system $g$ with $\theta-$H\"older stable or unstable distribution  ). Then for all sufficiently large integer $L$, for any $L-$tuple $(f_1,\cdots, f_L)$ in a $C^1$ open $C^r$ dense subset of $\textnormal{\Diff}^{r}(M, \mu)^L$, the IFS $\{ g, f_1,\cdots, f_L \}$ is transitive. Moreover, any $\{ g, f_1,\cdots, f_L \}-$invariant closed set has zero or total volume.
\end{thm}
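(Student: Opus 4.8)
The plan is to prove Theorem \ref{thm transitive group} by reducing it to the transitivity criterion of Proposition \ref{prop main criterion} together with the fact, emphasized in the introduction, that the criterion is $C^1$-robust in the presence of a uniformly dominated splitting. The overall strategy has two distinct layers: (i) an \emph{abstract} layer in which one shows that the hypotheses of the transitivity criterion are satisfied for a suitable perturbation of $(g,f_1,\dots,f_L)$, and (ii) a \emph{genericity} layer in which one shows that the set of $L$-tuples for which this works contains a $C^1$-open $C^r$-dense subset of $\diff^r(M,\mu)^L$.

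First I would fix the uniformly dominated splitting $TM=E_1\oplus E_2$ of $g$ with, say, $E_1$ being $\theta$-H\"older, $\theta\in(\frac{d}{d+1},1)$ (the case of $E_2$ being symmetric). The key structural input is that a dominated splitting is $C^1$-open and varies continuously, so that for $(f_1,\dots,f_L)$ in a $C^1$-neighborhood the associated random/skew-product dynamics over $\{0,1,\dots,L\}^{\mathbb N}$ (using $g$ as the ``$0$'' generator) inherits a dominated splitting; combined with the H\"older control on $E_1$ this should furnish exactly the ``uniform splitting'' of Definition \ref{def what is uniform} that feeds into the criterion. I would then need to produce, for a dense set of choices of $f_1,\dots,f_L$, whatever accessibility-type or minimality-of-stable-lamination hypothesis Proposition \ref{prop main criterion} demands: the natural mechanism is that for sufficiently large $L$ one has enough independent ``directions'' of perturbation to make the $f_i$ move points transversally to the invariant distributions of $g$, so that the union of the unstable-type leaves of the IFS is dense. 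Density in $C^r$ is obtained by an explicit perturbation — composing each $f_i$ with a small $C^r$ (indeed $C^\infty$) map supported near a chosen point and ``spreading'' mass along a curve transverse to $E_1$ — while $C^1$-openness of the resulting transitivity is precisely the content of the robustness of the criterion under dominated splitting. Finally, the ``zero or total volume'' statement for invariant closed sets should come out of the same argument: the uniform splitting gives expansion along $E_2$ (or contraction along $E_1$) at a definite rate for the random dynamics, and a Hopf-type / density-point argument (as in \cite{DK}, now valid in the transitivity formulation regardless of sign of exponents along the remaining directions) upgrades invariance of a positive-measure closed set to full measure.

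In more detail, the ordered steps I would carry out are: (1) State precisely the hypotheses of Proposition \ref{prop main criterion} and of Definition \ref{def what is uniform}, and check that $g$ alone, via its dominated splitting with a $\theta$-H\"older subbundle and $\theta>\frac{d}{d+1}$, already produces the required H\"older/distortion estimates along one of the bundles. (2) Show $C^1$-openness: the set of $(f_1,\dots,f_L)$ such that $\{g,f_1,\dots,f_L\}$ admits the uniform splitting is $C^1$-open, because dominated splittings persist and the relevant cone-field and growth conditions are open conditions. (3) Show $C^r$-density: starting from an arbitrary $(f_1,\dots,f_L)$, perform finitely many small $C^r$ perturbations, each supported in a small ball, to arrange that (a) the perturbed $f_i$'s generate enough transverse spreading so the ``stable lamination'' (or its unstable counterpart) of the IFS is minimal/dense, and (b) no common invariant sub-bundle or common invariant probability obstructs the criterion. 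This is where choosing $L$ large is essential: each perturbation consumes one or a few generators, and one needs $L$ at least of order $d$ to control all tangent directions. (4) Apply Proposition \ref{prop main criterion} to conclude transitivity for this open-dense set. (5) Run the measure-theoretic refinement to get the dichotomy for invariant closed sets, using the uniform splitting to control distortion and a Lebesgue-density-point argument.

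I expect the main obstacle to be Step (3), the $C^r$-density of the ``good'' configurations — specifically, verifying that a finite sequence of small, localized $C^r$ perturbations of the $f_i$ genuinely achieves the accessibility/minimality hypothesis of the criterion \emph{while simultaneously preserving the volume form and not destroying the dominated splitting inherited from $g$}. Volume preservation is the delicate constraint: one cannot freely push points around, so the perturbations must be built from time-one maps of small divergence-free vector fields supported in coordinate charts, and one must argue that the reachable set under the IFS, after such perturbations, sweeps out a neighborhood transverse to $E_1$. The interplay between ``$L$ large enough'' and ``perturbations small enough to stay $C^1$-close and keep the splitting'' will require a careful quantitative bookkeeping, but conceptually it is a controlled-perturbation argument of the type standard in $C^1$ generic dynamics, adapted to the volume-preserving and partially-hyperbolic setting. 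Everything else — openness, the H\"older estimates, the Hopf-type conclusion — should follow fairly directly once the criterion of Proposition \ref{prop main criterion} is in hand.
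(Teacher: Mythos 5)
Your proposal correctly identifies the macro-structure — reduce to Proposition \ref{prop main criterion}, get $C^1$-openness from the persistence of the uniformity condition, and get $C^r$-density by divergence-free perturbations — but it substitutes a vague ``accessibility/minimality of stable lamination'' mechanism for the actual mechanism in the paper, and this is a genuine gap rather than a stylistic difference. Proposition \ref{prop main criterion} does not demand accessibility or a minimal lamination; it demands $(n_0,\kappa_1,\kappa_2)$-uniformity, a quantitative averaged contraction/expansion estimate on Grassmannian subspaces. The paper's route to this has two ingredients that your plan is missing entirely.

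First, having $g$ together with the perturbed $f_i$ is not enough: the paper introduces the notion of a tuple being $\eta$-\emph{nontransverse} to $E_1$ (for any $(x,E)$, at least $(1-\eta)L$ of the $f_i$ map $E$ transverse to $E_1$ at $f_i(x)$), and then Proposition \ref{prop unif to trans} shows that $\{g^K f_i\}_{i=1}^L$ is $(n_0,\kappa_1,\kappa_2,b)$-uniform for a suitable large power $K$. The composition with $g^K$ is essential: it is the dominated splitting of $g$, iterated $K$ times, that forces contraction in the directions aligned with $E_1$, while the $\eta$-nontransversality guarantees that after one step of a random $f_i$, with high probability the image of $E$ lands in a cone around $E_1$ where that contraction bites. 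Your description, in which $g$'s splitting ``inherits'' a dominated splitting for the skew product and directly produces the uniformity, does not work: without the $g^K$ and without the nontransversality condition, there is no averaged estimate of the required kind.

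Second, the $C^r$-density of $\eta$-nontransversal tuples is proved by a transversality-plus-dimension-count argument on the Grassmannian bundle, and this is exactly where $\theta>\frac{d}{d+1}$ is used — not, as you suggest, to ``furnish the uniform splitting.'' One fixes a global finite family of divergence-free vector fields $\{V_\alpha\}$ whose lifts to $Gr(M,\ell)$ span, and considers $f_i(B)=\Psi(V^{(i)}(B,\cdot),1)f_i$; the map $B\mapsto\Phi_{x,E}(B)=(\GG(f_i(B,\cdot))(x,E))_i$ is then a submersion uniformly in $(x,E)$. The ``bad'' set $\Sigma_0=\{(x,E):E\text{ not transverse to }E_1(x)\}$ has Hausdorff dimension strictly less than $d_\ell=\dim Gr(M,\ell)$ \emph{precisely because} $E_1$ is $\theta$-Hölder with $\theta>\frac{d}{d+1}$ (the bound $HD(\Sigma_0)\le d\beta'+\ell(d-\ell)-1<d_\ell$ needs $\beta'<\frac{d+1}{d}$ and $\beta'>\theta^{-1}$ simultaneously). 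Pulling back the $\delta$-neighborhood of the product bad set $\Sigma\subset Gr(M,\ell)^L$ by the submersion and summing over a $\delta^{1+\beta}$-net shows the bad parameter set has small measure once $L>\eta^{-1}\xi^{-1}d_\ell$. Your localized-bump perturbations near a chosen point, with $L$ ``of order $d$,'' do not implement this and would not let you see where the Hölder threshold and the largeness of $L$ actually come from. Finally, the ``zero or total volume'' dichotomy is already part of the conclusion of Proposition \ref{prop main criterion}, so it needs no separate Hopf-type step.
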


We note that in Theorem \ref{thm transitive group}, we fix the map $g$ and consider a generic $L-$tuple $(f_1,\cdots, f_L)$. However, if the splitting in Theorem \ref{thm transitive group}, $TM = E_{1} \oplus E_{2}$, is strongly \textit{pinched} ( see Definition \ref{def pin ds} ), the condition in Theorem \ref{thm transitive group} for $g$ is preserved under $C^1$ small perturbation. For the formal statement, see Corollary \ref{cor pinch}.

Theorem \ref{thm transitive group} is consistent with Pugh-Shub's stable ergodicity conjecture which asserts that ergodicity holds in a $C^2$ open and dense subset of the space of $C^2$ volume preserving partially hyperbolic diffeomorphisms. In fact, the validity of stable ergodicity conjecture would imply Theorem \ref{thm transitive group} for $r =2$ and $K = 1$, even without the H\"older condition.

In both Theorem \ref{thm generalization DK}, \ref{thm transitive group}, we can conclude, in loose terms, \lq\lq topological \rq\rq ergodicity, where we replace the measurable sets in the definition of ergodicity by closed sets.
Unfortunately, this extra requirement is indispensable for our method. That is why we fall short of proving ergodicity. Our results would imply ergodicity if we know that any ergodic invariant measure has open basin. However, this seems to be difficult to check under generic conditions. For dissipative systems, one can sometimes conclude ergodicity through studying the topological properties of the basins of ergodic measures. We refer the readers to \cite{BDPP} for a stable ergodicity result using similar ideas.

\subsection{Plan of the paper} 
In Section \ref{preliminary}, we introduce basic notations and results needed for later sections. In this section the main tool is Theorem \ref{LE random dyn} that establishs properties of Lyapunov exponents for random dynamics. Section \ref{main section} contains our main technical result.   We introduce a certain  uniform splitting for random dynamics defined in Subsection \ref{Averaged uniform growth} and state our main criterion Proposition \ref{prop main criterion}. We give a uniform lower bound for the stable manifolds in Subsection \ref{Stable manifolds with uniformly lower bounded size}, then in Subsection \ref{A criteria for metric transitivity} we relate transitivity to this lower bound. In Section \ref{Transversality and growth} we show how to check the conditions in our criterion. We prove our main theorems in Section \ref{Proof of the main theorems}. 

\subsection*{Acknowledgement}
I thank Artur Avila for his encouragement. I also thank Rapha\"el Krikorian for useful conversations and especially for explaining to me some of the ideas in \cite{DK}. I thank Meysam Nassiri for useful conversations and providing related references. I thank Danijela Damjanovic for related discussions. I thank the anonymous referees for many helpful suggestions that greatly improve the presentation of the paper.

\section{Preliminary}\label{preliminary}

In this section, we will recall some basic notions and results in the study of the dynamics of multiple diffeomorphisms which are relevant to our results.

\subsection{Random dynamics}

Given a compact Riemannian manifold $(M, d_{M})$ endowed with volume form $\mu$ induced by the Riemannian metric, an integer $m \geq 2$ and an IFS $\{f_i\}_{i=1}^{m} \subset \textnormal{\Diff}^1(M)$. Let $\Omega  = \{1,\cdots, m\}^{\Z}$ and denote by $\PP$ the Bernoulli distribution on $\Omega$ with uniform distribution on $\{1,\cdots, m\}$, i.e.
\aryst
\PP(\omega | \omega_{-n} = \zeta_{-n}, \cdots, \omega_{n} = \zeta_{n}) = m^{-2n-1}, \forall n \in \N, (\zeta_{-n}, \cdots, \zeta_{n}) \in \{1,\cdots, m\}^{2n+1}.
\earyst

To simply the notations, for any function $\varphi \in L^1(\Omega, d\PP)$, we denote by $\EV(\varphi) = \int \varphi(\omega) d\PP(\omega)$.

For any $n \geq 1$, any  $\omega = (\omega_{i})_{i \in \Z} \in \Omega$, we set
\begin{eqnarray*}
f_{\omega}^{n} = f_{\omega_{n-1}} \cdots f_{\omega_0}.
\end{eqnarray*}
More generally, for any integer $k \in \Z$, $j \in \N$, we set
\begin{eqnarray*}
f^{k,k+j}_{\omega} &=& f_{\omega_{k+j-1}} \cdots f_{\omega_{k}}, \\
f^{k,k-j}_{\omega} &=& f^{-1}_{\omega_{k-j}} \cdots f^{-1}_{\omega_{k-1}}.
\end{eqnarray*}
Denote the shift map on $\Omega$ by 
\begin{eqnarray*}
T_0 : \Omega &\to& \Omega \\
(T_0(\omega))_n &= &\omega_{n+1},
 \end{eqnarray*}
and the suspension map on $\Omega \times M$ by
\begin{eqnarray*}
T : \Omega \times M &\to& \Omega \times M \\
T(\omega, x)&=& (T_0(\omega), f_{\omega_0}(x)).
\end{eqnarray*}

In the following, we will focus on the case where $\{f_i\}$ preserve the volume form $\mu$. In this case, it is clear that the measure $\PP \times \mu$ is $T-$invariant.

Similar to the study of deterministic dynamics i.e. iterations of a single diffeomorphism, we can also define Lyapunov exponents for random dynamics.   Both the statement  and the proof of the following theorem is similar to that of the well-known Oseledec's theorem.

\begin{thm} \label{LE random dyn}  ( Chapter I, Theorem 3.2 in \cite{LQ} )
Given $f_1, \cdots, f_m \in \textnormal{\Diff}^{1}(M,\mu)$.
There exists a measurable set $\Lambda_0 \subset \Omega \times M$ with $(\PP \times \mu)(\Lambda_0) = 1, T(\Lambda_0) = \Lambda_0$ such that :

\begin{enumerate}
\item For every $(\omega,x) \in \Lambda_0$ there exists a decomposition of $T_{x}M$
\begin{eqnarray*}
   T_{x}M = V^{(1)}(\omega, x) \oplus \cdots \oplus V^{(\gamma(x))}(\omega, x)
\end{eqnarray*}
and numbers
\begin{eqnarray*}
-\infty < \lambda^{(1)}( x) < \lambda^{(2)}(x) < \cdots < \lambda^{(\gamma( x))}( x) < +\infty
\end{eqnarray*}
which depend only on $x$, such that 
\begin{eqnarray*}
\lim_{n \to \infty} \frac{1}{n} \log \norm{Df^{n}_{\omega}(x, \xi)} = \lambda^{(i)}(x)
\end{eqnarray*}
for all $\xi \in V^{(i)}(\omega,x) \setminus \{0\} , 1 \leq i \leq \gamma(x)$. Moreover, $\gamma(x), \lambda^{(i)}(x)$ and $V^{(i)}(\omega,x)$ depend measurably on $(\omega,x) \in \Lambda_0$ and 
\begin{eqnarray*}
\gamma(f_{\omega_0}(x)) = \gamma( x), \lambda^{(i)}(f_{\omega_0}(x)) = \lambda^{(i)}(x), Df_{\omega_0}V^{(i)}(\omega,x) = V^{(i)}(T(\omega,x))
\end{eqnarray*}
for each $(\omega,x) \in \Lambda_0, 1 \leq i \leq \gamma(x)$.

\item For each $(\omega,x) \in \Lambda_0$, if $\{ \xi_1, \cdots, \xi_d \}$ is a basis of $T_{x}M$ which satisfies that for each $i=1,\cdots, \gamma(x)$ there are exactly $\dim V^{(i)}$ many indexes $j$ satisfying \begin{eqnarray} \label{basis le}
\lim_{n \to +\infty} \frac{1}{n} \log \norm{Df^{n}_{\omega}(x,\xi_j)} = \lambda^{(i)}(x),
\end{eqnarray}
then for every two non-empty disjoint subsets $P,Q \subset \{1, \cdots, d\}$ we have
\begin{eqnarray} \label{angle le}
\lim_{n \to +\infty} \frac{1}{n} \log |\angle( Df^{n}_{\omega} E_{P}, Df^{n}_{\omega}E_{Q})| = 0
\end{eqnarray}
where $E_{P}$ and $E_{Q}$ denote the subspaces of $T_{x}M$ spanned by the vectors $\{\xi_i\}_{i \in P}$ and $\{\xi_i\}_{i \in Q}$ respectively.
\end{enumerate}

\end{thm}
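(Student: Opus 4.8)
The plan is to read the statement off from the classical multiplicative ergodic theorem (MET) applied to the derivative cocycle of the skew product $T$, and then to run one short martingale argument, exploiting the product structure of $\PP$ together with the $f_i$-invariance of $\mu$, in order to see that the Lyapunov exponents and their multiplicities do not depend on $\o$.

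First I would fix a measurable trivialisation of $TM$ and view $\cA(\o,x):=D(f_{\o_0})_x$ as a linear cocycle over the measure preserving transformation $(T,\PP\times\mu)$ on $\Omega\times M$. Since $M$ is compact and each $f_i\in\Diff^{1}(M,\mu)$, both $\log\norm{\cA}$ and $\log\norm{\cA^{-1}}$ are bounded, so the $L^{1}$ integrability hypotheses of the invertible MET hold with room to spare (no ergodicity of $T$ is assumed). The MET then produces a $T$-invariant full-measure set $\Lambda_0$ on which one obtains: the splitting $T_{x}M=V^{(1)}(\o,x)\oplus\cdots\oplus V^{(\gamma(x))}(\o,x)$; finite numbers $\lambda^{(1)}<\cdots<\lambda^{(\gamma)}$ (finiteness because $\norm{\cA^{\pm1}}$ is bounded); the forward growth relation for $\xi\in V^{(i)}\setminus\{0\}$; the measurable dependence on $(\o,x)$; the equivariance $Df_{\o_0}V^{(i)}(\o,x)=V^{(i)}(T(\o,x))$; and the subexponential angle estimate \eqref{angle le}, which is one of the equivalent formulations of the Oseledec splitting and part of the standard MET package. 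What is \emph{not} yet in this form is the assertion that $\gamma$, the $\lambda^{(i)}$ and the $\dim V^{(i)}$ depend on $x$ alone, and that $\gamma\circ f_{\o_0}=\gamma$, $\lambda^{(i)}\circ f_{\o_0}=\lambda^{(i)}$; establishing this is the real content.

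For that I would work with the sums of the top $k$ exponents. For $1\le k\le d$ set $\Psi_{k}(\o,x):=\lim_{n\to\infty}\frac1n\log\norm{\Lambda^{k}Df^{n}_{\o}(x)}$, the growth rate of the $k$-fold exterior power; by Kingman's subadditive theorem over $T$ this limit exists a.e., is bounded and $T$-invariant, and — crucially — it is measurable with respect to $\sigma(\o_j:j\ge0)\otimes\cB(M)$, since it only involves forward products. Averaging over $\o$, put $\overline{\Psi}_{k}(x):=\EV_{\o}[\Psi_{k}(\o,x)]$. Using that under the Bernoulli measure $\o_0$ is independent of $(\o_1,\o_2,\dots)$ and that $\Psi_k$ sees $T_0\o$ only through its non-negative coordinates, the $T$-invariance of $\Psi_k$ gives $\overline{\Psi}_{k}=\frac1m\sum_{j=1}^{m}\overline{\Psi}_{k}\circ f_{j}$; since each $f_j$ preserves $\mu$, its Koopman operator is an $L^{2}(\mu)$ isometry, so equality in the triangle inequality forces $\overline{\Psi}_{k}\circ f_{j}=\overline{\Psi}_{k}$ $\mu$-a.e.\ for every $j$. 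Now comes the martingale step: writing $\Psi_{k}(\o,x)=\Psi_{k}(T_0^{n}\o,f_{\o}^{n}(x))$, and noting that conditionally on $(\o_0,\dots,\o_{n-1},x)$ the sequence $(\o_n,\o_{n+1},\dots)$ is independent of the conditioning and has the right law, one gets $\EV[\Psi_{k}\mid\o_0,\dots,\o_{n-1},x]=\overline{\Psi}_{k}(f_{\o}^{n}(x))=\overline{\Psi}_{k}(x)$, the last equality by the invariance just proved. Letting $n\to\infty$, the left side converges a.e.\ (L\'evy's martingale convergence theorem, $\Psi_k$ being measurable for the limiting $\sigma$-algebra) to $\Psi_{k}(\o,x)$, so $\Psi_{k}(\o,x)=\overline{\Psi}_{k}(x)$ a.e. Since the exponents listed in decreasing order with multiplicity are precisely the increments $\overline{\Psi}_{k}(x)-\overline{\Psi}_{k-1}(x)$ (with $\overline{\Psi}_{0}\equiv0$), this shows $\gamma$, the $\lambda^{(i)}$ and the $\dim V^{(i)}$ are functions of $x$ alone, and their $f_i$-invariance is inherited from that of the $\overline{\Psi}_{k}$. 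Intersecting $\Lambda_0$ with the (still full-measure, $T$-invariant) set where all of the above holds yields the theorem.

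The step I expect to be the main obstacle is exactly this last upgrade, from $(\o,x)$-dependence to $x$-dependence of the Oseledec data: soft measurability manipulations do not suffice, because changing $\o_0$ alters the whole forward orbit $f_{\o}^{n}(x)$, so one genuinely has to combine the i.i.d.\ structure of $\PP$ with the $\mu$-invariance of the $f_i$. The care goes into setting up the conditioning and the independence statement cleanly, and into checking that the forward exponents delivered by the two-sided MET really are the slopes of the $\overline{\Psi}_{k}$. The remaining ingredients — integrability, invoking the MET, the angle estimate and the equivariance — are routine.
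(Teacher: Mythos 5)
The paper offers no proof of this statement: it is quoted verbatim from Chapter I, Theorem 3.2 of \cite{LQ}, and the only original content the paper adds is Remark 1, which notes that \cite{LQ} works over the one-sided shift $\{1,\dots,m\}^{\N}\times M$ and obtains a filtration, whereas here one passes to the invertible skew product over $\Omega=\{1,\dots,m\}^{\Z}$ to get a genuine splitting. So there is no internal argument to compare against; what you have done is reconstruct the proof that the paper delegates to the literature, and your reconstruction is sound. You correctly reduce the statement to the two-sided MET for the derivative cocycle (integrability is free since $\log\norm{Df_i^{\pm 1}}$ are bounded on the compact $M$), correctly flag that the angle estimate \eqref{angle le} as stated — for an arbitrary Lyapunov basis, not just one adapted to the Oseledec subspaces — is a Lyapunov--Perron regularity fact that deserves a line in a full write-up, and correctly isolate the crux: the a.e.\ $\omega$-independence of $\gamma$, the $\lambda^{(i)}$ and the $\dim V^{(i)}$. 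Your martingale argument for that step is the standard one and works: Kingman gives existence and $T$-invariance of $\Psi_k$; the Bernoulli independence of $\omega_0$ from $(\omega_j)_{j\geq1}$ yields $\overline{\Psi}_k=\frac1m\sum_j\overline{\Psi}_k\circ f_j$; strict convexity of the $L^2(\mu)$ norm together with the fact that each $f_j$ preserves $\mu$ upgrades this to $\overline{\Psi}_k\circ f_j=\overline{\Psi}_k$; and L\'evy's theorem, using that $\Psi_k$ is $\sigma(\omega_j:j\ge0)\otimes\cB(M)$-measurable, gives $\Psi_k(\omega,x)=\overline{\Psi}_k(x)$ a.e. This is essentially how the result is proved in \cite{LQ} (following Kifer) in the one-sided setting, so your route is the expected one; the paper simply cites rather than reproduces it.
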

\begin{rema}\label{rema1}
In Theorem 3.2 \cite{LQ}, the author considered the non-invertible map defined on $\{1,\cdots, m\}^{\N} \times M$ and got a filtration of subspaces. Here we consider an invertible map and get a splitting.
\end{rema}

We call the numbers $\lambda^{(1)}(x), \cdots, \lambda^{(\gamma(x))}(x)$ the \textit{Lyapunov exponents} at $x$.

Now we recall some facts about stable and unstable manifolds of random dynamical systems. We follow the presentations in \cite{DK}. More detailed informations can be found in \cite{LQ}, Chapter III.

Given $f_1, \cdots, f_m \in \Diff^{1+\epsilon}(M,\mu)$ for some $\epsilon > 0$, for each $(\omega, x) \in \Omega \times M$, we define 
\begin{align*}
\cWs_{\omega}(x) = \{ y |  d(f^{n}_{\omega}(x), f^{n}_{\omega}(y) ) \to 0 \mbox{  exponentially fast, }  n \to \infty \}.
\end{align*}
Let $\varrho_{in}$ be the injectivity radius of $M$. We define 
\begin{align*}
\cWs_{\omega, loc}(x) = \{ y \in \cWs_{\omega}(x) |  d(f^{n}_{\omega}(x), f^{n}_{\omega}(y) ) < \varrho_{in}/2, \forall n \geq 0 \}.
\end{align*}
For $(\PP \times \mu)-a.e. (\omega,x)$ such that the Lyapunov exponents defined at $(\omega,x)$ are not all zero, $\cWs_{\omega}(x)$ are $C^{1}-$manifolds. We call $\cWs_{\omega}(x)$ ( resp. $\cWs_{\omega, loc}(x)$) the \textit{stable manifold} (resp. \textit{local stable manifold} ) associated to $(\omega,x)$. Similarly we can define unstable manifolds for $(\PP \times \mu)-a.e. (\omega,x)$, denoted by $\cWu_{\omega}(x)$, by considering $n$ tends to $-\infty$.
Endow $\cWs_{\omega}(x)$ with the induced Remannian distance, and for any $l > 0$ we denote by $\cWs_{\omega}(x,l)$ the $l-$ball in $\cWs_{\omega}(x)$.
We shall use the \textit{absolute continuity} of the lamination $\cWs_{\omega}$ ( for $\PP-a.e. \omega$). More precisely, we have :

$\textbf{(AC)}$ For almost all $\omega$ the following holds.
For any integer $1 \leq k \leq d$, let $\Gamma_k \subset M$ denote
\begin{eqnarray}
  \nonumber
\Gamma_k =& \{  x | (\omega,x) \in \Lambda_0, \exists 1 \leq i < d \mbox{ such that }\lambda^{(i)}(x) < 0 \leq \lambda^{(i+1)}(x) \mbox{ and } \\ 
& \dim(V^{(1)}(x) \oplus \cdots \oplus V^{(i)}(x)) = k\}. \label{term 101}
\end{eqnarray}
Then for any $\varepsilon > 0$ there exists a closed set $\cal{K} \subset \Gamma_k$, $\varrho > 0$ such that $\mu(\Gamma_k \setminus \cal{K}) < \varepsilon$, and  $\cWs_{\omega}(x, \varrho)$ is a $k-$dimensional submanifold for any $x \in \cal{K}$, and depends continuously on $x \in \cal{K}$. 

Let $\cal{V}_1,\cal{V}_2$ be two open submanifolds of dimension $d-k$ which are uniformly transverse to the lamination $\{ \cWs_{\omega}(x, \varrho) \}_{x \in \cal{K}}$. Assume that $\cal{V}_1, \cal{V}_2$ are small enough so that $\cWs_{\omega}(x, \varrho) \bigcap \cal{V}_i$ contains at most one point for any $i=1,2$, $x\in \cal{K}$.  Let 
\begin{align*}
\cal{U} = \{x \in \cal{V}_1 | \cWs_{\omega, \varrho}(x) \mbox{  transversally intersects $\cal{V}_2$ at a unique point} \}.
\end{align*}
When $\cal{V}_1, \cal{V}_2$ are sufficiently close, $\cal{U}$ is nonempty. In this case, the holonomy map along the stable leaves $p : \cal{U} \to \cal{V}_2$ is defined as 
\aryst
 p(x) = \mbox{ the unique intersection between $\cWs_{\omega}(x, \varrho)$ and $\cal{V}_2$}, \forall x \in \cal{U}.
 \earyst
Any such $p$ is absolutely continuous in the sense that for any $A \subset \cal{U}$ with $Vol_{\cal{U}}(A) = 0$, we have $Vol_{\cal{V}_2}(p(A)) = 0$. Here $Vol_{\cal{U}}$ and $Vol_{\cal{V}_2}$ are respectively the volumes on $\cal{U}$ and $\cal{V}_2$.

\subsection{Dominated splitting and pinching condition}

\begin{defi}\label{def gras}
For any integer $1 \leq \ell \leq d-1$, we denote by $Gr(M, \ell)$ the \textit{$\ell-$subspace Grassmannian bundle over $M$}. For each $x \in M$, the fibre $Gr(M,\ell)_{x}$ is isomorphic to $Gr(T_xM, \ell)$, the Grassmannian of $\ell-$subspaces of $T_xM$.

 We denote $p_{\ell} : Gr(M, \ell) \to M$ the canonical projection.  An element of $Gr(M, \ell)$ will be denoted by $(x,E)$ for some $x \in M$ and $E \in Gr(T_xM, \ell)$. We fix an arbitrary smooth Riemannian metric on $Gr(M,\ell)$.
 
For any $(x,E) \in Gr(M, \ell)$, the tangent space of $Gr(M, \ell)$ at $(x,E)$ can be identified with
\begin{eqnarray*}
T_{(x,E)}Gr(M, \ell) = \{(v,\phi) | v\in T_xM, \phi \in \cal L(E, T_xM \big / E)\}.
\end{eqnarray*}

Let $f : M \to M$ be a $C^r$ diffeomorphism for some $r \geq 1$. We denote by  $\GG(f) : Gr(M,\ell) \to Gr(M,\ell)$ the lift of $f: M \to M$ to the fiber bundle $Gr(M,\ell)$, where for any $(x,E) \in Gr(M, \ell)$,
\begin{eqnarray*}
\GG(f)(x, E) = (f(x), Df(x,E)).
\end{eqnarray*}
It is direct to see that $\GG(f)$ is $C^{r-1}$.
\end{defi}

\begin{defi} \label{lift vector field}
Let $r \geq 1$ be fixed, as well as an integer $1 \leq \ell  \leq d-1$ and a $C^{r}$ vector field $V$ on $M$. Let $\Psi_{V} : M \times \R \to M$ be the flow generated by $V$, i.e. $\partial_{t}\Psi_{V}(x,t) = V(\Psi_{V}(x,t))$ for all $(x,t) \in M \times \R$. Define $\GG(\Psi_{V})$ by
\begin{eqnarray*}
\GG(\Psi_{V}) : Gr(M,\ell) \times \R &\to& Gr(M,\ell) \\
(x,E,t) &\mapsto& \GG(\Psi_{V}(\cdot, t ))(x,E).
\end{eqnarray*}
Then $\GG(\Psi_{V})$ is a flow generated by a $C^{r-1}$ vector field on $Gr(M,\ell)$, denoted by $\GG(V)$. Here  $\GG(V)$ is defined as follows. For each $(x,E) \in Gr(M,\ell)$, 
\begin{eqnarray} \label{eq 90}
\GG(V)(x,E) = (V(x), \phi ),
\end{eqnarray}
where $\phi \in \cL(E, T_xM \big / E)$ is given by
\begin{eqnarray}\label{eq 100}
\phi(u) = DV(x, u) + E, \quad \forall u \in E.
\end{eqnarray}
We will briefly denote $\GG(V)(x,E)$ by
\begin{eqnarray} \label{eq 9022}
\GG(V)(x,E) = (V(x), DV(x,\cdot|E) ). 
\end{eqnarray}
\end{defi}

\begin{defi} \label{def partially hyperbolic}

A $C^{1}$ diffeomorphism $f : M \to M$ has a non-trivial \textit{uniformly dominated splitting} if the following conditions hold. There is a nontrivial continuous splitting of the tangent bundle $TM = E_1 \oplus  E_{2}$ that is invariant under $Df$. Furthermore, there exist constants $\bar{\chi}_1 <  \hat{\chi}_1$ such that 
\begin{eqnarray}
 \norm{Df(x,v)} &<& e^{\bar{\chi}_1} \norm{v},   \forall x \in M,  v \in E_{1}(x)\setminus \{0\}  \label{ds ineq 3}, \\
 \norm{Df(x,u)}  &>& e^{\hat{\chi}_1}\norm{u},  \forall x \in M,  u \in E_{2}(x)\setminus \{0\} \label{ds ineq 32}. 
\end{eqnarray} 

\end{defi}

The above definition includes any partially hyperbolic diffeomorphism, defined as follows.
\begin{defi} \label{def ph}

A $C^{1}$ diffeomorphism $f : M \to M$ is \textit{partially hyperbolic} if the following conditions hold. There is a nontrivial continuous splitting of the tangent bundle $TM = \Es \oplus \Ec \oplus \Eu$, that is invariant under $Df$. Furthermore, there exists $\bar{\chi}^{u},  \bar{\chi}^{s} > 0$ and $\bar{\chi}^{c}, \hat{\chi}^{c} \in \R$  such that
\begin{eqnarray} \label{ph ineq 1}
 -\bar{\chi}^{s} < \bar{\chi}^{c }\leq \hat{\chi}^{c} < \bar{\chi}^{u},
\end{eqnarray}
and we have

\begin{eqnarray} 
&\norm{Df(x, v)}& < e^{-\bar{\chi}^{s}}\norm{v} ,\forall x \in M, v \in \Es(x) \setminus \{0\} \label{ph ineq 2},\\
e^{\bar{\chi}^c}\norm{v} < &\norm{Df(x, v)}& < e^{\hat{\chi}^{c}}\norm{v} ,\forall x \in M,  v \in \Ec(x) \setminus \{0\} \label{ph ineq 3},\\
 e^{\bar{\chi}^{u}}\norm{v} < &\norm{Df(x, v)}& ,\forall x \in M,  v \in \Eu(x) \setminus \{0\}  \label{ph ineq 4}.
\end{eqnarray}

\end{defi}

It is clear that any partially hyperbolic diffeomorphisms has a non-trivial uniformly dominated splitting given by $TM = E^s \oplus (E^c \oplus E^u)$ ( or $TM = E^s \oplus (E^c \oplus E^u)$ ).

The lower bound for the H\"older exponents of $E_1$,$E_2$ in Theorem \ref{thm transitive group} will be used in an essential way. In fact, for any $C^2$ diffeomorphism $f$ with a dominated splitting $TM = E_1 \oplus E_2$, both $E_1(x), E_2(x)$ are H\"older continuous in $x$. This follows from H\"older section theorem (3.8) in \cite{HPS}. 

 The following notion of pinching for diffeomorphisms with a dominated splitting allows us to verify numerically the needed H\"older regularity in Theorem \ref{thm transitive group}. 
\begin{defi}[ Pinching for uniformly dominated splittings ]  \label{def pin ds} 
Let $\theta \in (0,1)$. A diffeomorphisms $f : M \to M$ having a non-trivial uniformly dominated splitting $TM = E_{1} \oplus E_{2}$ is said to be \textit{$\theta-$pinched} if the following holds. There exist constants  $(\bar{\chi}_1, \hat{\chi}_1)$ satisfying  \eqref{ds ineq 3}, \eqref{ds ineq 32} and $\hat{\chi}^u, \hat{\chi}^s > 0$ such that
\begin{eqnarray}
e^{-\hat{\chi}^s} \norm{v} < &\norm{Df(x,v)}&< e^{\hat{\chi}^u} \norm{v}  ,\forall x \in M, v\in T_xM \setminus \{0\} \label{theta pinching ds ineq 02},
\end{eqnarray}
\begin{eqnarray}\label{theta pinching ds ineq 1}
\max(\hat{\chi}^{u},\hat{\chi}^{s})  \theta <  \hat{\chi}_1 - \bar{\chi}_1.
\end{eqnarray} 
\end{defi}

The pinching condition is commonly used  in the study of partially hyperbolic systems. For example, Theorem A in \cite{PSW} says that  $\theta-$pinching implies that the $s,u-$holonomy maps for a $C^2$ partially hyperbolic diffeomorphism are $\theta-$H\"older. What we need from the pinching condition for dominated splitting is the following much weaker statement that can be proved via the standard H\"older section theorem in \cite{HPS}. We omit the proof of the following proposition for it is well-known, see for example \cite{HPS}, chapter 3, (3.8).

\begin{prop} \label{prop theta pin theta holder}
Let  $r \geq 2$ \clb, $\theta \in (0,1)$. Let $f : M \to M$ be a $C^{r}$ diffeomorphism having a $\theta-$pinched uniformly dominated splitting $TM =E_{1} \oplus E_{2}$ with constants  $\bar{\chi}_1, \hat{\chi}_1, \hat{\chi}^s, \hat{\chi}^u$ satisfying \eqref{ds ineq 3}, \eqref{ds ineq 32}, \eqref{theta pinching ds ineq 02} and \eqref{theta pinching ds ineq 1}, then $E_{1}(x), E_{2}(x)$ are $\theta-$H\"older functions of $x$.
\end{prop}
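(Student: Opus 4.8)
The plan is to realise each sub-bundle as the unique attracting invariant section of a Grassmann bundle and to invoke the H\"older section theorem (\cite{HPS}, chapter 3, (3.8)). Put $\ell_i=\dim E_i$ and let $\sigma_i\colon M\to Gr(M,\ell_i)$, $\sigma_i(x)=(x,E_i(x))$; these are continuous by Definition \ref{def partially hyperbolic} and, since $Df$ preserves the splitting, $\sigma_1$ is invariant under $\GG(f^{-1})$ while $\sigma_2$ is invariant under $\GG(f)$ (one must invert for $E_1$, since $E_1$ is normally \emph{repelling} for $\GG(f)$).

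For $E_1$ I would work on $\mathcal N_1=\{(x,E)\in Gr(M,\ell_1):E\pitchfork E_2(x)\}$, which is forward invariant under $\GG(f^{-1})$ because $Df^{-1}$ maps $E_2(x)$ onto $E_2(f^{-1}(x))$. Writing a subspace transverse to $E_2(x)$ as the graph of a linear map $E_1(x)\to E_2(x)$ (using $T_xM/E_1(x)\cong E_2(x)$ as in Definition \ref{def gras}), the fibre map of $\GG(f^{-1})$ becomes, in these coordinates, the linear map
\[
\phi\longmapsto \big(Df^{-1}|_{E_2(x)}\big)\circ\phi\circ\big(Df|_{E_1(f^{-1}(x))}\big),
\]
whose norm is at most $\norm{Df^{-1}|_{E_2(x)}}\,\norm{Df|_{E_1(f^{-1}(x))}}\le e^{-\hat\chi_1}e^{\bar\chi_1}=e^{\bar\chi_1-\hat\chi_1}<1$ by \eqref{ds ineq 3} and \eqref{ds ineq 32}. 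Since the graph-chart metric and the fixed metric on $Gr(M,\ell_1)$ are comparable up to a bounded factor on the compact $M$, the fibre maps of $\GG(f^{-1})$ are uniform $\kappa$-contractions with $\kappa$ as close to $e^{\bar\chi_1-\hat\chi_1}$ as we wish on a uniform neighbourhood of $\sigma_1(M)$; moreover $\GG(f^{-1})$ is Lipschitz on $Gr(M,\ell_1)$ (it is $C^{r-1}$, $r\ge2$), and the base map $f^{-1}$ has inverse $f$ with $\mathrm{Lip}(f)\le e^{\hat\chi^u}$ by the right inequality in \eqref{theta pinching ds ineq 02}.

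The H\"older section theorem then says that the unique invariant section in this neighbourhood is $\alpha$-H\"older whenever $\kappa\,(e^{\hat\chi^u})^{\alpha}<1$, i.e.\ $\alpha\hat\chi^u<\hat\chi_1-\bar\chi_1$ (for $\kappa$ chosen close enough to $e^{\bar\chi_1-\hat\chi_1}$). By \eqref{theta pinching ds ineq 1} this holds for $\alpha=\theta$, and since $\sigma_1$ is itself a continuous invariant section in the neighbourhood it must be that section; hence $x\mapsto E_1(x)$ is $\theta$-H\"older. For $E_2$ the argument is symmetric on $\{(x,E)\in Gr(M,\ell_2):E\pitchfork E_1(x)\}$: the fibre map of $\GG(f)$ is $\psi\mapsto (Df|_{E_1(x)})\circ\psi\circ(Df|_{E_2(x)})^{-1}$, of norm $\le e^{\bar\chi_1-\hat\chi_1}$, the relevant base Lipschitz constant is $\mathrm{Lip}(f^{-1})\le e^{\hat\chi^s}$ (left inequality in \eqref{theta pinching ds ineq 02}), and the bunching inequality reduces to $\theta\hat\chi^s<\hat\chi_1-\bar\chi_1$, again supplied by \eqref{theta pinching ds ineq 1}.

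I expect the only delicate point to be the passage from the pointwise spectral bound $e^{\bar\chi_1-\hat\chi_1}$ to a genuine uniform fibre-contraction constant relative to the ambient Grassmannian metric on an honestly invariant neighbourhood of the section; this is routine given compactness of $M$ and the strictness of \eqref{ds ineq 3}, \eqref{ds ineq 32} and \eqref{theta pinching ds ineq 1}. The rest is a direct application of \cite{HPS}, (3.8), which is why the proposition is stated without proof.
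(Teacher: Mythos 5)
Your proposal is correct and is precisely the proof the paper intends: the paper omits the argument, citing \cite{HPS}, chapter 3, (3.8), and your write-up carries out that citation in detail, with the graph-chart fibre maps, the contraction rate $e^{\bar\chi_1-\hat\chi_1}$, the base Lipschitz constants $e^{\hat\chi^u}$ (for $E_1$, iterating $\GG(f^{-1})$) and $e^{\hat\chi^s}$ (for $E_2$, iterating $\GG(f)$), and the bunching inequality \eqref{theta pinching ds ineq 1} all correctly matched to the hypotheses of the H\"older section theorem. The one point you flag---promoting the pointwise spectral bound to a uniform fibre-contraction constant in the ambient Grassmannian metric---is indeed resolved by the strictness of the inequalities together with passing to a high iterate of $f$ if needed, as you indicate.
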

The following is an immediate consequence of Proposition \ref{prop theta pin theta holder}.
\begin{cor}\label{cor pinch}
Theorem \ref{thm transitive group} applies to any $\frac{d}{d+1}-$pinched $C^r$ volume preserving partially hyperbolic diffeomorphism.
\end{cor}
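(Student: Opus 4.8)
The statement to prove is Corollary~\ref{cor pinch}: Theorem~\ref{thm transitive group} applies to any $\frac{d}{d+1}$-pinched $C^r$ volume preserving partially hyperbolic diffeomorphism.

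The plan is to deduce this directly from Proposition~\ref{prop theta pin theta holder} together with Theorem~\ref{thm transitive group}, so the "proof" is really a short bookkeeping argument about the quantifier structure of $\theta$. First I would pick a pinching exponent: given that $g$ is $\frac{d}{d+1}$-pinched, by definition there exist constants $\bar\chi_1 < \hat\chi_1$ realizing the dominated splitting $TM = E_1 \oplus E_2$ together with $\hat\chi^s, \hat\chi^u > 0$ satisfying \eqref{theta pinching ds ineq 02} and the strict inequality $\max(\hat\chi^u,\hat\chi^s)\,\frac{d}{d+1} < \hat\chi_1 - \bar\chi_1$. Since this inequality is strict, there is some $\theta_0 \in (\frac{d}{d+1}, 1)$ with $\max(\hat\chi^u,\hat\chi^s)\,\theta_0 < \hat\chi_1 - \bar\chi_1$ still holding; indeed one may take any $\theta_0$ with $\frac{d}{d+1} < \theta_0 < \min\{1, (\hat\chi_1-\bar\chi_1)/\max(\hat\chi^u,\hat\chi^s)\}$, which is a nonempty interval. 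Thus $g$ is in fact $\theta_0$-pinched for this $\theta_0$.

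Next I would invoke Proposition~\ref{prop theta pin theta holder} with this $\theta_0$: since $g$ is a $C^r$ diffeomorphism with $r \geq 2$ having a $\theta_0$-pinched uniformly dominated splitting $TM = E_1 \oplus E_2$, the proposition gives that $E_1(x)$ and $E_2(x)$ are $\theta_0$-H\"older functions of $x$. In particular both $E_1$ and $E_2$ are $\theta_0$-H\"older with $\theta_0 \in (\frac{d}{d+1}, 1)$, so the hypothesis "either $E_1$ or $E_2$ is $\theta$-H\"older for some $\theta \in (\frac{d}{d+1},1)$" in Theorem~\ref{thm transitive group} is satisfied (with $\theta = \theta_0$). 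A $\frac{d}{d+1}$-pinched partially hyperbolic diffeomorphism is in particular a volume preserving diffeomorphism admitting a uniformly dominated splitting (for instance $TM = E^s \oplus (E^c \oplus E^u)$, as noted after Definition~\ref{def ph}), so all remaining hypotheses of Theorem~\ref{thm transitive group} hold verbatim. Applying that theorem to $g$ yields: for all sufficiently large $L$, for any $L$-tuple $(f_1,\dots,f_L)$ in a $C^1$ open $C^r$ dense subset of $\Diff^r(M,\mu)^L$, the IFS $\{g, f_1,\dots,f_L\}$ is transitive, and moreover every $\{g,f_1,\dots,f_L\}$-invariant closed set has zero or total volume. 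This is precisely the conclusion of Theorem~\ref{thm transitive group} for $g$, which is what the corollary asserts.

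The only point requiring any care — and the place where a careless argument would fail — is the passage from the pinching exponent $\frac{d}{d+1}$ to a strictly larger $\theta_0 < 1$: one must use that \eqref{theta pinching ds ineq 1} is a \emph{strict} inequality, and that Definition~\ref{def pin ds} only fixes the value of $\theta$ to be the one for which the inequality is tested, not a maximal such value; monotonicity of $\theta \mapsto \max(\hat\chi^u,\hat\chi^s)\theta$ then lets one enlarge $\theta$ slightly while keeping the inequality, which is exactly what is needed to land in the open interval $(\frac{d}{d+1},1)$ demanded by Theorem~\ref{thm transitive group}. No further estimates are involved; everything else is a direct substitution of hypotheses.
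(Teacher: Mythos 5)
Your proof is correct and takes essentially the same approach as the paper: upgrade the pinching exponent from $\frac{d}{d+1}$ to a slightly larger $\theta_0 \in (\frac{d}{d+1},1)$ using the strictness of \eqref{theta pinching ds ineq 1}, invoke Proposition~\ref{prop theta pin theta holder} to get $\theta_0$-H\"older regularity of $E_1, E_2$, and then apply Theorem~\ref{thm transitive group}. The paper's proof is just a terser version of the same two-step argument.
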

\begin{proof}
Any $\frac{d}{d+1}-$pinched partially hyperbolic diffeomorphism is also $\theta-$pinched for some $\theta \in (\frac{d}{d+1},1)$ by Definition \ref{def pin ds}. Then our corollary follows immediately from Proposition \ref{prop theta pin theta holder}.
\end{proof}

\section{From averaged uniform growth to metric transitivity}\label{main section}

From now on, we let $(M, d_M)$ be a smooth $d-$dimensional compact Riemannian manifold endowed with the volume form $\mu$ given by the metric.

\subsection{Averaged uniform growth}\label{Averaged uniform growth}
To simplify the notations, we introduce the following quantities.
\begin{nota}
Given an IFS $\{f_i\}_{i=1}^{m} \subset \textnormal{\Diff}^1(M)$, an integer $b\in [1,d-1]$, for any $x \in M$, any $(d-b)-$subspace  $E \subset T_{x}M$, any $n \in \N$, we denote
\begin{eqnarray*}
C(x,E,n) &=&  \EV(\log \sup_{v \in U(E^{\perp})} \norm{P_{{Df_{\omega}^{n}(x,E)^{\perp}}}(Df_{\omega}^{n}(x,v) )} ), \\
D(x,E,n) &=&  \EV( \log \inf_{u \in U(E)} \norm{Df_{\omega}^{n}(x,u) } ).
\end{eqnarray*}
Here for any linear subspace of $T_{x}M$ denoted by $G$, we define $U(G)$ to be the set of unit vectors in $G$; for any $v \in T_{x}M$, we denote by $P_{G}(v)$ the orthogonal projection of $v$ to $G$. 
\end{nota}

In loose terms, $C(x,E,n)$ describes the weakest contraction rate in the directions transverse to $E$ for a typical iteration of length $n$ starting from $x$; $D(x,E,n)$ describes the strongest contraction ( or the weakest expansion ) in $E$  for a typical iteration of length $n$ starting from $x$.

\begin{defi}\label{def what is uniform}
Given any IFS $\{f_i\}_{i=1}^{m} \subset \textnormal{\Diff}^1(M)$, an integer $b \in [1,d-1]$,
if there exists an integer $n_0 \geq 1$, real numbers $\kappa_1 > 0$, $\kappa_2 \in (-\infty, \kappa_1) $ such that for any $x \in M$, any $(d-b)-$subspace $E \subset T_{x}M$, we have
\begin{eqnarray}
\frac{1}{n_0}C(x,E,n_0) &<& -\kappa_1  \label{uni ineq 1}, \\
\frac{1}{n_0}D(x,E,n_0) &>& -\kappa_2  \label{uni ineq 2}.
\end{eqnarray}
Then we say that $\{f_i\}_{i=1}^{m}$ is \textit{$(n_0, \kappa_1, \kappa_2, b)-$uniform}.

If there exists $b \in [1,d-1]$ such that $\{f_i\}_{i=1}^{ m}$ is $(n_0, \kappa_1, \kappa_2, b)-$uniform then we say that $\{f_i\}_{i=1}^{m}$ is \textit{$(n_0, \kappa_1, \kappa_2)-$uniform}. By definition, it is clear that being $(n_0, \kappa_1,\kappa_2)-$uniform is a $C^1$ open property.

\end{defi}

The following is the main result of this section.
\begin{prop}\label{prop main criterion}
Let $\epsilon > 0$ be fixed, as well as an integer $m \geq 2$. Let $\{ f_i \}_{i=1}^{m} \subset \diff^{1+\epsilon}(M, \mu)$ be an IFS that is $(n_0, \kappa_1, \kappa_2)-$uniform for some integer $n_0 \geq 1$, real numbers $\kappa_1 > 0$, $\kappa_2 \in (-\infty, \kappa_1)$. Then any $\{ f_i \}_{i=1}^{m}-$invariant closed set $\cal{K}$ strictly contained in $M$ satisfies $\mu(\cal{K}) = 0$. In particular, $\{ f_i \}_{i=1}^m$ is transitive. 
\end{prop}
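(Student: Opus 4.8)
The plan is to deduce metric transitivity from the existence of stable manifolds of uniformly bounded size, following the three-step architecture announced in the introduction (the subsections "Averaged uniform growth", "Stable manifolds with uniformly lower bounded size", "A criteria for metric transitivity"). Here is how I would organize it.

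\textbf{Step 1: Uniform growth gives abundant stable directions with controlled exponents.} Fix $b$ and $(n_0,\kappa_1,\kappa_2)$ as in Definition \ref{def what is uniform}. Using the subadditive/Birkhoff argument applied to the functions $\log\sup_{v\in U(E^\perp)}\|P_{(Df^{n_0}_\omega(x,E))^\perp}Df^{n_0}_\omega(x,v)\|$ and $\log\inf_{u\in U(E)}\|Df^{n_0}_\omega(x,u)\|$ over the $T$-invariant measure $\PP\times\mu$, together with Theorem \ref{LE random dyn}, I would show: for $(\PP\times\mu)$-a.e.\ $(\omega,x)$, the Oseledets splitting at $(\omega,x)$ has a sub-bundle $E^-(\omega,x)$ of dimension exactly $d-b$ on which $Df^n_\omega$ contracts at rate at most $-\kappa_1<0$ (so there is a well-defined stable subspace containing it), while on a complementary $b$-dimensional space the growth rate is at least $-\kappa_2$ in a suitable averaged sense — in particular the number of nonpositive Lyapunov exponents is at least $d-b\ge 1$, so $\cWs_\omega(x)$ is a genuine $C^1$ manifold of dimension $\ge d-b$ a.e. The key point to extract is a \emph{quantitative, location-independent} contraction: because the hypotheses \eqref{uni ineq 1}–\eqref{uni ineq 2} hold at \emph{every} $x$ and \emph{every} subspace $E$, the relevant integrals are controlled uniformly, and one gets a uniform exponential estimate $\|Df^n_\omega|_{E^-(\omega,x)}\|\le C(\omega)e^{-\kappa_1 n/2}$ with the failure set controlled.

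\textbf{Step 2: Stable manifolds of uniformly lower-bounded size on a positive-measure set.} From Step 1 plus the standard graph-transform / Pesin-theory construction for random $C^{1+\epsilon}$ dynamics (as in \cite{LQ}, Chapter III, whose presentation \cite{DK} follows), I would produce, for a.e.\ $\omega$, a measurable set $\Lambda_\omega\subset M$ with $\mu(\Lambda_\omega)$ bounded below by a \emph{positive constant independent of $\omega$}, a radius $\varrho_0>0$ independent of $\omega$, such that for every $x\in\Lambda_\omega$ the local stable manifold $\cWs_{\omega}(x,\varrho_0)$ is a $C^1$ disk of dimension $\ge d-b\ge 1$ of internal radius $\ge\varrho_0$, varying continuously with $x$ on (a closed subset of full-enough measure of) $\Lambda_\omega$, and transverse to a fixed reference foliation; here I invoke property \textbf{(AC)} for absolute continuity of the stable holonomy. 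The uniformity of $\varrho_0$ and of the measure lower bound is exactly what the uniform estimates of Step 1 buy us, and it is what makes the covering argument in Step 3 work.

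\textbf{Step 3: From uniformly large stable manifolds to metric transitivity.} Let $\cal K\subsetneq M$ be closed, $\{f_i\}$-invariant, and suppose $\mu(\cal K)>0$; I want a contradiction. First, $\{f_i\}$-invariance of $\cal K$ implies $f^n_\omega(\cal K)=\cal K$ for a.e.\ $\omega$ (all $n$), hence if $x\in\cal K$ then $\cWs_\omega(x)\subset\cal K$ whenever the stable manifold is defined, because points on it are forward-asymptotic to orbits in the closed set $\cal K$. By Step 2 and a Fubini/Lebesgue-density argument, for a density point $x_0$ of $\cal K$ with respect to $\mu$, a positive-$\mu$-measure set of nearby points $y\in\cal K$ carry local stable disks $\cWs_\omega(y,\varrho_0)\subset\cal K$ of dimension $\ge d-b$ and size $\ge\varrho_0$; sweeping these disks through a transverse disk and using absolute continuity of the stable holonomy \textbf{(AC)}, the union has positive $\mu$-measure and, crucially, one shows its complement near $x_0$ cannot have full density — more precisely, $\cal K$ must contain a whole open set, OR (working instead with the \emph{unstable} direction via the symmetric argument applied to $\{f_i^{-1}\}$, which is $(n_0,\kappa_1,\kappa_2)$-uniform for the complementary index $b'=d-b$ after swapping the roles, using that $\sum$ of Lyapunov exponents is $0$ by volume preservation) one saturates $\cal K$ by both stable and unstable plaques and concludes $\mu(\cal K)\in\{0,1\}$ by the usual Hopf-type absolute-continuity dichotomy. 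Since $\cal K$ is closed and invariant, having $\mu(\cal K)=1$ forces $\cal K=M$ by density, contradiction; hence $\mu(\cal K)=0$. Transitivity then follows: if $\{f_i\}$ were not transitive, the complement of the (nonempty) union of some forward orbit's closure would contain an invariant closed set of positive measure, contradicting what we just proved — concretely, one checks that $M\setminus\bigcup_\omega\overline{\{f^n_\omega(x):n\ge0\}}$ reasoning or a Baire-category argument on the closed invariant sets produces such a $\cal K$.

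\textbf{Main obstacle.} The delicate point is Step 2 — extracting \emph{uniform} (in $\omega$, and over a positive-measure set of $x$) lower bounds on the size of stable manifolds purely from the averaged inequalities \eqref{uni ineq 1}–\eqref{uni ineq 2}, which only control expectations at a single scale $n_0$, not pathwise behavior. The averaged hypothesis must be upgraded, via the Birkhoff ergodic theorem applied along the $T$-orbit and a maximal-function / large-deviation estimate, to pathwise exponential contraction with quantitative control on the exceptional set, and this has to be done while keeping all constants independent of $\omega$; handling the non-domination (the stable bundle is only measurable, not continuous) is what forces the use of Pesin blocks and the absolute continuity \textbf{(AC)} rather than a soft argument. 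The second subtlety is making sure the dimension count $d-b\ge1$ really yields \emph{nontrivial} stable disks at a.e.\ point even in the presence of possible zero Lyapunov exponents — this is where $\kappa_1>0$ in \eqref{uni ineq 1} is essential, as it guarantees at least $d-b$ \emph{strictly negative} exponents.
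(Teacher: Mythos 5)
The fatal problem is in Step~3, beginning with its first inference: for $x \in \cal K$ it is \emph{not} true that $\cWs_\omega(x) \subset \cal K$. A point $y \in \cWs_\omega(x)$ merely satisfies $d(f^n_\omega(y), f^n_\omega(x)) \to 0$; being forward-asymptotic to a closed invariant set does not place a point inside it (the stable manifold of a hyperbolic fixed point $p$ is not contained in $\{p\}$). The paper's Proposition~\ref{main prop} is built on the \emph{opposite} observation: with probability $>1/4$ the stable manifold of a boundary point of $\Gamma := \cal K$ must \emph{leave} $\Gamma$. One picks $y \in \Gamma^c$ near $\Gamma$, lets $x \in \Gamma$ realize $d(y,\Gamma)$, and sets $E = T_x Z$ where $Z$ is the sphere centered at $y$ passing through $x$. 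Then any $(l,\rho,\theta)$-regular curve through a nearby $z \in \Gamma$ transverse to (the parallel translate of) $E$ enters the open ball bounded by $Z$, which lies in $\Gamma^c$. Since Proposition~\ref{prop uniform to uniform bound} gives probability $>1/2$ that $\cWs_\omega(z)$ contains such a curve, one combines this with Lusin and \textbf{(AC)} to obtain a positive-volume set of $w \in \Gamma^c$ lying on $\cWs_\omega(z')$ for some $z' \in \Gamma$, and then Poincar\'e recurrence for $\PP \times \mu$ gives $\liminf_n d(w, f^n_\omega(w)) = 0$, whence $\liminf_n d(w, f^n_\omega(z')) = 0$ with $f^n_\omega(z') \in \Gamma$, contradicting $d(w,\Gamma) > 0$. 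The engine is recurrence, not saturation.

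Your fallback — "saturate $\cal K$ by both stable and unstable plaques and conclude $\mu(\cal K)\in\{0,1\}$ by the usual Hopf-type absolute-continuity dichotomy" — is exactly the route the paper explains it \emph{cannot} take. Under the hypotheses only $b$ Lyapunov exponents are forced strictly negative and $\kappa_2$ may be negative, so a center direction with zero exponents is not excluded; the introduction credits Dolgopyat--Krikorian's Hopf argument to the absence of zero exponents in even dimension and attributes the present weaker "topological" conclusion (closed, not measurable, invariant sets) precisely to that obstruction. It is also not automatic that $\{f_i^{-1}\}$ is $(n_0,\kappa_1,\kappa_2,d-b)$-uniform: the definition is not self-dual, since it quantifies over subspaces of a fixed codimension at a single scale $n_0$ and permits $\kappa_2 < 0$. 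Regarding Steps~1--2, your plan matches the chain of lemmas leading to Proposition~\ref{prop uniform to uniform bound}, with two corrections worth making: the fast-contracting Oseledets block $V^-(\omega,x)$ has dimension $b$, not $d-b$ (the supremum in $C(x,E,n)$ runs over the $b$-dimensional $E^\perp$), and the output you actually need is not a uniformly sized disk transverse to a fixed reference foliation but the quantified statement of Proposition~\ref{prop uniform to uniform bound}: for a.e.\ $x$ and \emph{every} hyperplane $E \subset T_x M$, $\PP\bigl(\cWs_\omega(x)\text{ is }(l,\rho,\theta)\text{-regular with respect to }E\bigr) > 1/2$ --- it is the freedom to choose $E$ at the last step that makes the sphere argument close.
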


The proof of Proposition \ref{prop main criterion} is divided into two parts, contained in the next two Subsections. We will give the proof of Proposition  \ref{prop main criterion} at the end of Subsection \ref{A criteria for metric transitivity}.

\subsection{Stable manifolds with uniformly lower bounded size}\label{Stable manifolds with uniformly lower bounded size}

\begin{lemma}\label{lemma ldt}
For any integers $n_0 \geq 1$, $b \in [1,d-1]$, real numbers $\kappa_1 > 0$, $\kappa_2 \in (-\infty, \kappa_1)$,
for any IFS $\{f_i\}_{i=1}^{m}$ that is $(n_0,\kappa_1,\kappa_2,b)-$uniform,  there exist $C_1, \sigma > 0$, such that the following is true.
For any $x \in M$, any $(d-b)-$subspace $E \subset T_{x}M$, any integer $n \geq 0$, we have
\begin{align*}
\EV(\sup_{v \in U(E^{\perp})}{\norm{P_{Df^{n}_{\omega}(x,E)^{\perp}}(Df^{n}_{\omega}(x,v))}}^{\sigma}) < C_1 e^{-n \sigma \kappa_1},
\end{align*}
\begin{align*}
\EV((\inf_{v \in U(E)}\norm{{Df^{n}_{\omega}(x,v)}})^{-\sigma}) < C_1 e^{n \sigma \kappa_2}. 
\end{align*}
\end{lemma}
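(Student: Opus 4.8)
The statement is a standard "large deviations from subadditivity" type estimate, obtained by upgrading the hypothesis on the expectation of the log (the $(n_0,\kappa_1,\kappa_2,b)$-uniformity condition, which controls $\mathbb{E}(\log(\cdots))$ at scale $n_0$) to an estimate on a small moment $\mathbb{E}((\cdots)^{\sigma})$ for all $n$. The plan is to prove the two inequalities separately but by the same mechanism: first establish submultiplicativity (resp. supermultiplicativity) of the relevant random quantities along the orbit, then use the elementary fact that for a random variable $Z>0$ with $\mathbb{E}(\log Z)<-c<0$ one has $\mathbb{E}(Z^\sigma)<1$ for $\sigma>0$ small enough, and finally iterate.

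Let me set up notation for the first inequality. For $(\omega,x)$ and a $(d-b)$-subspace $E$, write $A^n_\omega(x,E)=\sup_{v\in U(E^\perp)}\norm{P_{Df^n_\omega(x,E)^\perp}(Df^n_\omega(x,v))}$. The key structural point is a submultiplicativity relation: writing $n=n_1+n_2$, if we set $y=f^{n_1}_\omega(x)$, $F=Df^{n_1}_\omega(x,E)$, and $\omega' = T_0^{n_1}\omega$, then $A^n_\omega(x,E)\le A^{n_1}_\omega(x,E)\cdot A^{n_2}_{\omega'}(y,F)$. This holds because the norm of a composition of projections-composed-with-linear-maps is dominated by the product of the individual operator norms along the way — the transverse growth of a composition is at most the product of transverse growths, provided one is careful that the "perpendicular" spaces are being tracked consistently, which they are here since $Df^{n_1}_\omega(x,E)^\perp$ at $y$ is exactly the fiber over which the second factor computes its transverse growth. (One should double-check that the orthogonal projection does not cause a loss; since projections have operator norm $1$, the submultiplicativity goes through.) Similarly, for the second inequality, $B^n_\omega(x,E)=\inf_{v\in U(E)}\norm{Df^n_\omega(x,v)}$ is supermultiplicative: $B^n_\omega(x,E)\ge B^{n_1}_\omega(x,E)\cdot B^{n_2}_{\omega'}(y,F)$, so $(B^n_\omega)^{-1}$ is submultiplicative.

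With submultiplicativity in hand, I would proceed as follows. By uniformity, $\mathbb{E}(\log A^{n_0}_\omega(x,E)) = C(x,E,n_0) < -n_0\kappa_1$ uniformly in $(x,E)$. Since $\log A^{n_0}_\omega(x,E)$ is bounded above uniformly (by $n_0\log\max_i\norm{Df_i}_{C^0}$, using compactness of $M$ and finiteness of the IFS), the moment-generating function estimate applies uniformly: there exist $\sigma_0>0$ and $\delta>0$ such that $\mathbb{E}(A^{n_0}_\omega(x,E)^\sigma)\le e^{-\sigma n_0\kappa_1+\sigma^2\delta}<e^{-\sigma n_0 \kappa_1/2}$ say, for all $0<\sigma\le\sigma_0$ and all $(x,E)$; shrink $\sigma$ further so the exponent is $<1$ in multiplicative terms — actually I just need it $<e^{-\sigma n_0\kappa_1'}$ for some $\kappa_1'\in(0,\kappa_1)$. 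Then for $n=qn_0$, conditioning successively and using submultiplicativity together with the Markov property of $\mathbb{P}$ (the increments $T_0^{jn_0}\omega$ restricted to the relevant coordinates are independent), one gets $\mathbb{E}(A^n_\omega(x,E)^\sigma)\le \big(\sup_{x,E}\mathbb{E}(A^{n_0}_\omega(x,E)^\sigma)\big)^q\le e^{-q\sigma n_0\kappa_1'}=e^{-n\sigma\kappa_1'}$; for general $n$ one absorbs the at-most-$n_0$ leftover steps into the constant $C_1$ (again using the uniform $C^0$ bound on the derivatives). Choosing $\kappa_1'$ close enough to $\kappa_1$ — or rather, since the statement only asks for $e^{-n\sigma\kappa_1}$, I would from the start run the argument with $\kappa_1$ replaced by the value $-\frac{1}{n_0}C(x,E,n_0)$ which is $>\kappa_1$, leaving room; alternatively just take $\sigma$ small and note $\sigma^2\delta$ is negligible against $\sigma n_0\kappa_1$. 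The second inequality is identical with $A$ replaced by $B^{-1}$ and $\kappa_1$ by $-\kappa_2$ (note $\kappa_2$ may be negative, so $B^{-1}$ may be growing, but the argument is insensitive to the sign).

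**Expected main obstacle.** The routine part is the moment/large-deviation bookkeeping. The one genuinely delicate point is verifying the submultiplicativity of $A^n_\omega(x,E)$ cleanly, i.e. that tracking the \emph{orthogonal complement} of the pushed-forward subspace (rather than an invariant splitting) does not break the inequality. I expect one needs the observation that for linear maps $L_2\circ L_1$ and subspaces, the transverse norm $\norm{P_{(L_2L_1 E)^\perp} L_2 L_1 v}$ can be bounded by $\norm{P_{(L_2 F)^\perp} L_2 (\cdot)}$ applied to $P_{F^\perp}L_1 v$ plus a contribution from $P_F L_1 v$ that lands in $L_2 F$ and hence is killed by $P_{(L_2F)^\perp}$ — so in fact only the $F^\perp$-component survives and submultiplicativity holds with no loss. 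Making this lemma precise and uniform over all subspaces $E$ is where the real (if modest) work lies; everything downstream is the standard Furstenberg–Kesten / Guivarc'h moment argument adapted to the random (Bernoulli IFS) setting.
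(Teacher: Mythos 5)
Your proof is correct and follows essentially the same route as the paper's (which in turn adapts Lemma~4 of \cite{DK}): both upgrade the hypothesis $\frac{1}{n_0}C(x,E,n_0)<-\kappa_1$ to a uniform bound on the $\sigma$-th moment at scale $n_0$ via the Taylor expansion $e^{\sigma\log Z}=1+\sigma\log Z+O(\sigma^2)$ (valid because $\log Z$ is uniformly bounded by compactness), then iterate using submultiplicativity and independence of the Bernoulli blocks, absorbing the remainder steps into $C_1$. Your explicit verification of the submultiplicativity of $A^n_\omega(x,E)$ — that $P_{F}(L_1v)$ is mapped into $L_2F$ and hence annihilated by $P_{(L_2F)^\perp}$, so only the $F^\perp$-component contributes — is precisely the point the paper leaves implicit in its phrase ``by submultiplicativity,'' and it is correct.
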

\begin{proof}
The proof is similar to that of Lemma 4 in \cite{DK}.

By our hypothesis,  we have 
\begin{eqnarray*}
\sup_{\substack{x \in M \\ E \in Gr(T_{x}M, d-b) }}\EV(\log \sup_{v \in U(E^{\perp})} \norm{P_{Df^{n_0}_{\omega}(x,E)^{\perp}}(Df^{n_0}_{\omega}(x,v))} ) < - n_0 \kappa_1.
\end{eqnarray*}
Then by $e^{x} = 1+ x + O(x^{2})$ when $x \in (-1,1)$, for small $\sigma > 0$  ( depending on $n_0$ and $\norm{f_i}_{C^{1}}$, $i=1,\cdots, m$ ), we have
\begin{eqnarray*}
&&\sup_{\substack{x \in M \\ E \in Gr(T_{x}M, d-b) }}\EV(\sup_{v \in U(E^{\perp})} \norm{P_{Df^{n_0}_{\omega}(x,E)^{\perp}}(Df^{n_0}_{\omega}(x,v))}^{\sigma} )  \\
&=&1 + \sigma \sup_{\substack{x \in M \\ E \in Gr(T_{x}M, d-b) }}\EV(\log \sup_{v \in U(E^{\perp})} \norm{P_{Df^{n_0}_{\omega}(x,E)^{\perp}}(Df^{n_0}_{\omega}(x,v))} ) + O(\sigma^{2}).
\end{eqnarray*}
Then by \eqref{uni ineq 1}, when $\sigma > 0$ is sufficiently small, we have
\begin{eqnarray*}
\sup_{\substack{x \in M \\ E \in Gr(T_{x}M, d-b) }}\EV(\sup_{v \in U(E^{\perp})} \norm{P_{Df^{n_0}_{\omega}(x,E)^{\perp}}(Df^{n_0}_{\omega}(x,v))}^{\sigma} )  < e^{-\sigma n_0 \kappa_1}.
\end{eqnarray*}
By submultiplicativity, for each integer $k > 0$ we have
\begin{eqnarray*}
\sup_{\substack{x \in M \\ E \in Gr(T_{x}M, d-b) }}\EV(\sup_{v \in U(E^{\perp})} \norm{P_{Df^{l n_0}_{\omega}(x,E)^{\perp}}(Df^{k n_0}_{\omega}(x,v))}^{\sigma} )  < e^{-\sigma k n_0 \kappa_1}.
\end{eqnarray*}
Then for some large constant $C_1$, for all $n \in \N$ we have
\begin{eqnarray*}
\sup_{\substack{x \in M \\ E \in Gr(T_{x}M, d-b) }}\EV(\sup_{v \in U(E^{\perp})} \norm{P_{Df^{n}_{\omega}(x,E)^{\perp}}(Df^{n}_{\omega}(x,v))}^{\sigma} )  < C_1 e^{-\sigma n \kappa_1}.
\end{eqnarray*}
This proves the first inequality. 
The second inequality follows from a similar argument.
\end{proof}

In order to state our main proposition properly, we will have to quantify transversality between linear subspaces and curves on the manifold.

\begin{defi}
Given any $x \in M$,   for any real numbers $l, \rho > 0$ , a $C^{1}$ curve $\mathcal{S} \subset M$ through $x$ is said to be \textit{ $(l, \rho)-$ good} if the following holds: 

(1)  $10(l+ l \rho)$ is smaller than the injectivity radius of $M$ and there exists a $C^{1}$ function $\psi : T_{x}\cal{S}(l) \to (T_{x}\cal{S})^{\perp}$, such that $\exp_{x}(\textnormal{graph}(\psi)) \subset \cal{S}$, where $T_{x}\cal{S}(l)$ denotes the interval of radius $l$ centered at the origin of $T_{x}\cal{S}$;

(2) $\psi(0) = 0$ and $\norm{\psi}_{C^1} < \rho$.

Given any $x \in M$, any hyperplane $E \subset T_{x}M$, for any $l, \rho, \alpha > 0$, a $C^1$ curve $\mathcal{S} \subset M$ through $x$ is said to be \textit{$(l, \rho, \alpha)-$regular} with respect to $E$ if $\mathcal{S}$ is $(l,\rho)-$good and :

(3) The angle between $T_{x}\mathcal{S}$ and $E$ is not less than $\alpha$.

We say that a subset $\cD \subset M$ containing $x$ is $(l, \rho, \alpha)-$regular with respect to $E$ if there exists a $C^1$ curve $\cal{S} \subset \cD$ that contains $x$, and is $(l, \rho, \theta)-$regular with respect to $E$.

\end{defi}

The following proposition gives a useful property of a uniform IFS.

\begin{prop} \label{prop uniform to uniform bound}
Let $\epsilon > 0$ be fixed, as well as an integer $m \geq 2$. Let $\{ f_i \}_{i=1}^{m} \subset \diff^{1+\epsilon}(M, \mu)$ be an IFS that is $(n_0, \kappa_1, \kappa_2)-$uniform for some integer $n_0 \geq 1$, real numbers $\kappa_1 > 0$, $\kappa_2 \in (-\infty, \kappa_1)$. 
Then there exist $l, \rho, \theta > 0$ such that for all $x$ in a co-null subset of $M$, for any hyperplane $E \subset T_{x}M$, we have
\begin{eqnarray*}
\PP(\mbox{$\cWs_{\omega}(x)$ is $(l, \rho, \theta)-$ regular with respect to $E$}) > \frac{1}{2}.
\end{eqnarray*}
\end{prop}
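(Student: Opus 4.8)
The goal is to produce, for $\PP$-almost every $\omega$ and every point $x$ in a co-null set, a stable manifold $\cWs_\omega(x)$ that is a genuine $C^1$ curve with controlled size ($l$), controlled $C^1$-geometry relative to its tangent line at $x$ ($\rho$), and tangent direction quantitatively transverse to the prescribed hyperplane $E$ ($\theta$), all with probability $> 1/2$. The plan is to choose $b = d-1$ in the definition of $(n_0,\kappa_1,\kappa_2,b)$-uniform, so that for every $x$ and every hyperplane $E \subset T_xM$ the quantities $C(x,E,n_0)$ and $D(x,E,n_0)$ control the contraction transverse to $E$ and the worst contraction inside $E$; apply Lemma \ref{lemma ldt} to get exponential moment bounds
\begin{align*}
\EV\Big(\sup_{v\in U(E^\perp)} \norm{P_{Df^n_\omega(x,E)^\perp}(Df^n_\omega(x,v))}^\sigma\Big) &< C_1 e^{-n\sigma\kappa_1},\\
\EV\Big(\big(\inf_{v\in U(E)}\norm{Df^n_\omega(x,v)}\big)^{-\sigma}\Big) &< C_1 e^{n\sigma\kappa_2},
\end{align*}
uniformly in $x$ and $E$. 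The first bound, via Chebyshev and Borel--Cantelli, says that for a large-probability set of $\omega$ the line $E^\perp$ (or any direction escaping $E$) is contracted at rate roughly $\kappa_1 > 0$ under $Df^n_\omega(x)$; the second says directions inside $E$ contract no faster than rate $-\kappa_2 < \kappa_1$. Since $\kappa_1 > \kappa_2$, this is a uniform dominated-contraction picture: the one-dimensional family of directions complementary to $E$ is the strong stable direction.

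Next I would run the graph-transform / Hadamard--Perron construction for the cocycle $(Df^n_\omega(x))_{n\ge 0}$ over the base $(\Omega,\PP,T_0)$ using these uniform exponential estimates, rather than the a.e.-defined Oseledets stable manifolds of Theorem \ref{LE random dyn}. Fix a cone $\mathcal{C}$ around the complement of $E$ of angular width $\theta$; the moment bounds guarantee that on an event $\Omega_x \subset \Omega$ with $\PP(\Omega_x) > 1/2$ (by Chebyshev, after fixing $\sigma,C_1$ and choosing the scales appropriately — one may need to pass to the $n_0$-step cocycle and use stationarity of $\PP$) the forward iterates $Df^n_\omega(x)$ contract this cone uniformly with a definite gap. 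Standard fixed-point arguments in the space of $C^1$ graphs over the chosen $1$-dimensional direction then produce a local stable manifold $\cWs_{\omega,loc}(x)$ which is the graph of a $C^1$ function $\psi$ over an interval of radius $l$ in $T_x\cWs_\omega(x)$, with $\norm{\psi}_{C^1} < \rho$, where $l,\rho$ depend only on $n_0,\kappa_1,\kappa_2,\epsilon$ and the $C^{1+\epsilon}$-norms of the $f_i$ — this is exactly the $(l,\rho)$-good condition. Because the contracted cone is contained in the $\theta$-cone complementary to $E$, its tangent $T_x\cWs_\omega(x)$ makes angle $\ge \theta$ with $E$, giving condition (3) and hence $(l,\rho,\theta)$-regularity. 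Here I would be careful that the local stable manifold so constructed is a leaf of the genuine stable lamination $\cWs_\omega$; this follows because any $C^1$ curve whose iterates contract exponentially is contained in $\cWs_\omega(x)$ by definition, and conversely the uniform contraction shows points off this curve (within a neighborhood) are not forward-asymptotic at the required rate.

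The main obstacle, and where I expect the real work to lie, is the uniform $C^1$ (rather than merely $C^0$) control of $\psi$ with constants independent of $x$ and $E$, starting only from integrated (expectation) bounds: Lemma \ref{lemma ldt} controls $\EV$ of the norms, so for a fixed $\omega$ the per-step contraction can be bad, and one must interpolate between the large-deviation-type control of the cocycle and the pointwise graph transform — the natural device is a tempering/stopping-time argument that isolates a subset of $\omega$ on which the finite-time products are uniformly hyperbolic from step zero, while keeping $\PP$ of that subset above $1/2$. A secondary subtlety is that the definition of $(n_0,\kappa_1,\kappa_2)$-uniform only requires \emph{some} $b$; to get transversality to an \emph{arbitrary} hyperplane $E$ one genuinely wants $b = d-1$, so I would first reduce to that case (if $\{f_i\}$ is $(n_0,\kappa_1,\kappa_2,b)$-uniform for $b < d-1$, the transverse contraction on $(d-b)$-planes still dominates, and restricting to lines recovers the needed one-dimensional statement for any hyperplane containing the appropriate $(b-1)$-dimensional piece — some bookkeeping with Grassmannians is needed here). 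Once these uniformity issues are handled, conditions (1)--(3) fall out of the construction, and the constants $l,\rho,\theta$ are extracted from the contraction rates.
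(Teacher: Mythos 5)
Your proposal has two genuine gaps. First, you cannot choose $b$: the hypothesis produces some $b \in [1,d-1]$ for which the moment estimates hold, and the stable manifold $\cWs_\omega(x)$ is $b$-dimensional with tangent the Oseledets subspace $V^-(\omega,x)$ spanned by the $b$ slowest exponents. Your reduction from $b < d-1$ to $b = d-1$ does not follow: $(n_0,\kappa_1,\kappa_2,d-1)$-uniformity requires the transverse-projection estimate for \emph{every line} $E$, but if $E$ is a line inside $V^-(\omega,x)$ the complementary $(d-1)$-plane contains non-contracting directions and the bound has no reason to hold. (You also appear to have the dimensions reversed: with $b = d-1$ the subspace $E$ in Definition \ref{def what is uniform} has dimension $d-b = 1$, not $d-1$.)

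Second and more centrally, a fixed cone around $E^\perp$ is not where the stable manifold sits. The stable tangent $V^-(\omega,x)$ is a random subspace that is merely transverse to $E$; it may be far from $E^\perp$ and arbitrarily close to $E$. The moment bounds of Lemma \ref{lemma ldt} control $\|P_{(Df^n_\omega(x,E))^\perp} Df^n_\omega(x,v)\|$, but they do not make any fixed cone around $E^\perp$ forward-invariant under the random cocycle, so there is no pointwise hyperbolic splitting to feed into a Hadamard--Perron graph transform. The missing ingredient is the paper's Lemma \ref{a priori est 3}: the strict gap $\kappa_1 > \kappa_2$, combined with Lemma \ref{lemma ldt} and Cauchy--Schwarz, forces $\PP(V^-(\omega,x) \cap E \neq \{0\}) = 0$ for each fixed $(d-b)$-subspace $E$, since a nontrivial intersection would yield equal asymptotic rates along and transverse to $E$. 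Lemma \ref{good angles} then upgrades this to the quantitative angle bound $\PP(\angle(E,V^-(\omega,x))\le\epsilon)\le C_0\epsilon^\beta$, which is precisely what gives the lower bound on $\theta$; the $(l,\rho)$-good control on a set of probability close to $1$ comes from the Pesin-set estimates in Lemmas \ref{lemma pesin set large} and \ref{lemma pesin set of uniform bound}. Your tempering/stopping-time remark is a gesture at the latter, but the probabilistic transversality of $V^-$ to $E$ has no analogue anywhere in your argument and is the heart of the statement.
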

\begin{proof} 
By definition, $\{f_i\}_{i=1}^{m}$ is $(n_0, \kappa_1, \kappa_2, b)-$uniform for some  integer $b \in [1,d]$. Let $\Omega = \{1,\cdots, m\}^{\Z}$ and let $\Lambda_0 \subset \Omega \times M$ be given by Theorem \ref{LE random dyn}.

By applying Lemma \ref{lemma ldt} and Borel-Cantelli lemma, we see that: for all $(\omega, x)$ in a co-null set in $\Lambda_0$, the Lyapunov exponents are not all zero. Without loss of generality, we can assume that $\cWs_{\omega}(x)$ are $C^{1}-$manifolds for all $(\omega,x) \in \Lambda_0$.

We claim that we can give a lower bound for the regularity of these stable manifolds uniformly in $(x,E) \in Gr(M, d-b)$ for $x$ in a co-null set. For this purpose, we first establish some \textit{a priori} estimates.

\begin{lemma}\label{a priori est 3}
For any $(\omega,x) \in \Lambda_0$, set $\gamma(x), \{ V^{(j)}(\omega, x)\}_{j=1}^{\gamma(x)}$ as in Theorem \ref{LE random dyn} , let $q \in [1,\gamma(x)]$ be the smallest integer such that $\dim \sum_{j=1}^{q} V^{(j)}(\omega,x)$ is at least $b$, and let $V^{-}(\omega, x) =\sum_{j=1}^{q} V^{(j)}(\omega,x)$. Then for every $x$ in a co-null set of $M$,  for each subspace $E \subset T_{x}M$ of dimension $(d-b)$, we have
\begin{eqnarray*}
\PP( V^{-}(\omega,x) \bigcap E \neq \{0\} ) = 0.
\end{eqnarray*}
Moreover, $\dim(V^{-}(\omega,x)) = b$ for $(\PP \times \mu)-a.e. (\omega,x)$.
\end{lemma}
\begin{proof}
Assume to the contrary that there exists a positive measure set $M_0 \subset M$ such that for every $x \in M_0$, $\PP-a.e. \omega$ satisfy that $(\omega,x) \in \Lambda_0$, and there exists a subspace $E \subset T_{x}M$ of dimension $(d-b)$ such that $\PP(V^{-}(\omega,x) \bigcap E \neq \{0\}) > 0$. 

We fix an arbitrary $x \in M_0$.
For $\PP-a.e. \omega$ such that $(\omega,x) \in \Lambda_0$ and $V^{-}(\omega,x) \bigcap E \neq \{0\}$, for any $v \in V^{-}(\omega,x) \bigcap E$ we have 
\begin{align}\label{a priori est 3 ineq 1}
\lim_{n \to \infty}\frac{1}{n}\log \norm{Df^{n}_{\omega}(x, v)} \leq \lambda^{(q)}(x).
\end{align}
By the choice of $q$, let $V' := \sum_{j=1}^{q-1}V^{(j)}(\omega,x)$, we have $\dim V' < b$. Then $ V' + E \subsetneq T_{x}M$. 
 It is easy to choose a basis of $V' + E$, denoted by $\xi_1, \cdots, \xi_{\ell}$ for some integer $1 \leq \ell < d$, and vectors $\xi_{\ell +1}, \cdots, \xi_{d}$, such that $\xi_1, \cdots, \xi_d$ form a basis of $T_{x}M$, and satisfies that for each $1 \leq i \leq \gamma(x)$, the number of indexes $j$ satisfying \eqref{basis le} equals to $\dim V_i(\omega,x)$.
Let $u = \xi_{r+1}$. By Theorem \ref{LE random dyn} (2), we have 
\begin{eqnarray}\label{anglenottoosmall}
\lim_{n \to \infty} \frac{1}{n}\log |\angle( Df^{n}_{\omega}(x,u), Df^{n}_{\omega}(x,E))| = 0.
\end{eqnarray}
By $u \notin V'$, we have
 \begin{align}\label{a priori est 3 ineq 2}
 \lim_{n \to \infty}\frac{1}{n}\log \norm{P_{Df^{n}_{\omega}(x,E)^{\perp}}(Df^{n}_{\omega}(x,u))} \geq \lambda^{(q)}(x).
 \end{align}
Denote by $\Omega_0$ the set of events $\omega$  simultaneously realising \eqref{a priori est 3 ineq 1} for some $v \in E$, and \eqref{a priori est 3 ineq 2} for some $u \notin E$. Then we have
\ary \label{term 2000}
\PP(V^{-}(\omega, x) \bigcap E \neq \{0\}) \leq \PP(\Omega_0).
\eary
Moreover for any $\omega \in \Omega_0$, we have
\begin{eqnarray*}
\liminf_{n \to \infty}( \sup_{u \in U(E^{\perp})} \frac{1}{n} \log \norm{P_{Df^{n}_{\omega}(x,E)^{\perp}}(Df^{n}_{\omega}(x,u))} - \inf_{v \in U(E)} \frac{1}{n}\log \norm{Df^{n}_{\omega}(x, v)}) \geq 0.
\end{eqnarray*}

\textbf{Claim.} We have $\PP(\Omega_0) = 0$.
\begin{proof}
Assume to the contrary that $\PP(\Omega_0) > 0$. Let $\kappa' = \frac{\kappa_1 - \kappa_2}{4}$.
Then for all sufficiently large $n$, the following event of $\omega$ has probability at least  $\frac{1}{2}\PP(\Omega_0)$ :
\aryst
 \sup_{u \in U(E^{\perp})} \frac{1}{n} \log \norm{P_{Df^{n}_{\omega}(x,E)^{\perp}}(Df^{n}_{\omega}(x,u))} - \inf_{v \in U(E)} \frac{1}{n}\log \norm{Df^{n}_{\omega}(x, v)} > -\kappa'.
 \earyst
Combined with Lemma \ref{lemma ldt} and Cauchy's inequality, we obtain for all sufficiently large $n$ that 
 \begin{eqnarray*}
 \frac{1}{2}\PP(\Omega_0)e^{-\frac{1}{2}n \sigma \kappa'} &<&
\EV((\sup_{v \in U(E^{\perp})}{\norm{P_{Df^{n}_{\omega}(x,E)^{\perp}}(Df^{n}_{\omega}(x,v))}})^{\frac{\sigma}{2}} (\inf_{v \in U(E)}\norm{{Df^{n}_{\omega}(x,v)}})^{-\frac{\sigma}{2}}) \\
 &<& \EV(\sup_{v \in U(E^{\perp})}{\norm{P_{Df^{n}_{\omega}(x,E)^{\perp}}(Df^{n}_{\omega}(x,v))}}^{\sigma})^{\frac{1}{2}}  \EV((\inf_{v \in U(E)}\norm{{Df^{n}_{\omega}(x,v)}})^{-\sigma})^{\frac{1}{2}} \\
 &<&C_1^2e^{-n\sigma \frac{\kappa_1 - \kappa_2}{2}}
\end{eqnarray*}
By letting $n$ tend to infinity, we obtain a contradiction. This proves our claim.
\end{proof}
We obtain a contradiction by the above claim and \eqref{term 2000}. This concludes the proof.
\end{proof}

By Lemma \ref{a priori est 3}, we see that for $x$ in a co-null set in $M$, for any $E \subset T_{x}M$ of dimension $(d-b)$, we have $\dim V^{-}(\omega,x)=b$ and $V^{-}(\omega,x) \bigcap E = \{0\}$ for $\PP-a.e. \omega$.

Now we show that with large probability, the stable manifolds at $x$ form good angles with any given $(d-b)-$dimensional subspace.
\begin{lemma} \label{good angles}
There exist constants $C_0, \beta$ such that for all $x$ in a co-null set in $M$, for any $E \subset T_{x}M$ of dimension $(d-b)$, for any $\epsilon > 0$, we have
\begin{eqnarray*}
\PP(\angle(E, V^{-}(\omega,x)) \leq \epsilon ) \leq C_0 \epsilon^{\beta}
\end{eqnarray*}
\end{lemma}
\begin{proof}
The proof of this lemma follows exactly that of Corollary 4(b) in \cite{DK}. The push-forward of any graph of linear map from $E$ to $E^{\perp}$ under $Df^{n}_{\omega}$ gets exponentially close to that of $E$ with large probability. By combining this with Lemma \ref{a priori est 3}, this shows that $E$ forms small angle with $V^{-}(\omega,x)$ with small probability. We refer the readers to \cite{DK} for details. 
\end{proof}

We set $V^{+}(\omega,x) = \oplus_{j=q+1}^{\gamma(x)} V^{(j)}(\omega,x)$, where $q \in [1,\gamma(x)]$ is given by Lemma \ref{a priori est 3}.
By definition, it is clear that $T_{x}M$ is the direct sum of $V^{-}(\omega,x)$ and $V^{+}(\omega,x)$. It is a standard fact that $V^{-}(\omega,x)$ depends only $\omega_{0}, \omega_{1},\cdots$ and $x$;  $V^{+}(\omega,x)$ depends only $\omega_{-1}, \omega_{-2},\cdots$ and $x$.

Now we define \lq\lq Pesin events \rq\rq as follows. Take any constants  $\bar{\kappa}_1 , \bar{\kappa}_2 $ such that $\bar{\kappa}_1 > 0$ and $\kappa_2 < \bar{\kappa}_2 < \bar{\kappa}_1 < \kappa_1$. We set $\epsilon_0 = \frac{1}{10} \min(\bar{\kappa}_2 - \kappa_2 , \kappa_1 - \bar{\kappa}_1)$.
For any constants $C, \epsilon > 0$, set
\begin{eqnarray*}
\Delta_{C, \epsilon}(x) = \{\omega | \norm{Df^{k, k+j}_{\omega} | V^{-} } \leq  Ce^{\epsilon k -  j \bar{\kappa}_1}, \norm{Df^{k,k-j}_{\omega}| V^{+}} \leq Ce^{\epsilon k + j \bar{\kappa}_2},  \\ \angle(V^{-}(T^{k}(\omega,x)), V^{+}(T^{k}(\omega,x))) \geq C^{-1} e^{-k \epsilon} \}.
\end{eqnarray*}

We can give lower bounds for the probabilities of Pesin events. The following is the analogue to Corollary 4(a),(c) of \cite{DK}.
\begin{lemma} \label{lemma pesin set large}
For each $\sigma > 0$, there exist $C, \epsilon > 0$ such that for any $x$ in a co-null set of $M$, we have 
\begin{eqnarray*}
\PP(\Delta_{C,\epsilon}(x)) > 1- \sigma.
\end{eqnarray*}
\end{lemma}
\begin{proof}
By Lemma \ref{good angles} and the fact that $V^{-}(T^{k}(\omega,x))$ depends only on $\omega_{k}, \omega_{k+1}, \cdots$ and $T^{k}(\omega,x)$ ; $V^{+}(T^{k}(\omega,x))$ depends only $\omega_{k-1}, \omega_{k-2}, \cdots$  and $T^{k}(\omega,x)$, we have 
\begin{eqnarray*}
\PP(\angle(V^{-}(T^{k}(\omega,x)), V^{+}(T^{k}(\omega,x)))\leq C^{-1}e^{-k\epsilon} ) \leq C_0C^{-\beta} e^{-k\beta\epsilon}.
\end{eqnarray*}
Summing up over $k \geq 1$, we see that the probability of the events that violate the last condition in the definition of $\Delta_{C,\epsilon}$ can be made arbitrarily small by making $C$ large.

For each $k$ and $j \geq 1$, for each event $\omega$ that violates the first condition for $\Delta_{C,\epsilon}$, i.e. $\norm{Df^{k,k+j}_{\omega}|V^{-}} > Ce^{\epsilon k - j \bar{\kappa}_1}$, we claim that, conditioning on $\omega_0, \cdots, \omega_{k-1}$, for any subspace $E \subset T_{f^{k}_{\omega}(x)}M$ of dimension $(d-b)$,

(1) either we have  $|\angle(V^{-}(T^{k+j}(\omega,x)), Df^{k,k+j}_{\omega}(x,E))| < C^{-\frac{1}{2}}e^{-(j+k)\epsilon_0}$; 

(2) or there exists $v \in U(E^{\perp})$ such that $\norm{P_{Df^{k,k+j}_{\omega}(x,E)^{\perp}}(Df^{k,k+j}_{\omega}(x,v))} > \frac{1}{10} C^{\frac{1}{2}}e^{\epsilon k - (k+j)\epsilon_0 - j \bar{\kappa}_1}$.  

Indeed, assume the first case fails, that is, $| \angle(V^{-}(T^{k+j}(\omega,x)), Df^{k,k+j}_{\omega}(x,E))| \geq C^{-\frac{1}{2}} e^{-(j+k)\epsilon_0}$.
We choose a unit vector $u \in V^{-}(T^{k}(\omega, x))$ such that $\norm{Df_{\omega}^{k,k+j}(x, u)} \geq Ce^{\epsilon k - j \bar{\kappa}_1}$. Then 
\begin{eqnarray*}
 &&\norm{ P_{Df^{k,k+j}_{\omega}(x,E)^{\perp}}(Df^{k,k+j}_{\omega}(x,P_{E^{\perp}}(u)))} \\
&=&\norm{P_{Df^{k,k+j}_{\omega}(x,E)^{\perp}}(Df^{k,k+j}_{\omega}(x,u)) } \\
&\geq& Ce^{\epsilon k - j \bar{\kappa}_1} |\sin \angle(V^{-}(T^{k+j}(\omega,x)), Df^{k,k+j}_{\omega}(x,E))| \\
&\geq& \frac{2}{\pi}C e^{\epsilon k  - j \bar{\kappa}_1}| \angle(V^{-}(T^{k+j}(\omega,x)), Df^{k,k+j}_{\omega}(x,E))| >  \frac{1}{10}C^{\frac{1}{2}} e^{\epsilon k - (k+j)\epsilon_0 - j \bar{\kappa}_1}.
\end{eqnarray*}
The second case is then verified for $v = \frac{P_{E^{\perp}}(u)}{\norm{P_{E^{\perp}}(u)}}$.

 Choose $\epsilon = 2\epsilon_0$ and $j \geq n_0$. By Lemma \ref{lemma ldt}, the probabilities of the events in (1),(2) are bounded by $C_0C^{-\frac{\beta}{2}}e^{-(j+k)\beta \epsilon_0}$ and $10C_1C^{-\frac{\sigma}{2}}e^{-\sigma (k+j)\epsilon_0}$ respectively. Since $\{e^{- \sigma ( j+k) \epsilon_0} + e^{- (j+k) \beta \epsilon_0}\}_{k \geq 1, j\geq 1}$ are summable, we see that the probabilities of the events that violet the first condition for $\Delta_{C,\epsilon}$ can be made arbitrarily small by making $C$ large.

For the second condition, we note that for each $k \in \N$, $1\leq j \leq k$, $V^{+}(T^{k-j}(\omega,x))$ depends only on $\omega_{k-j-1}, \omega_{k-j-2}, \cdots$. For each $\omega$ violating the second condition, i.e. $\norm{Df^{k,k-j}_{\omega}|V^{+}} > C e^{\epsilon k + j \bar{\kappa}_2}$, we have $\inf_{v \in V^{+}(T^{k-j}(\omega,x))}\norm{Df^{k-j,k}_{\omega}(v)} < C^{-1}e^{- \epsilon k - j \bar{\kappa}_2 }$.
By Lemma \ref{lemma ldt} for $E = V^{+}(T^{k-j}(\omega,x))$, we see that the probabilities of the events that violate the second condition for $k,j$ are bounded by $C_2C^{-\sigma}e^{-\sigma(j+k)\epsilon}$. Then by the same reasoning, we can make the probability of violating the second condition arbitrarily small by making $C$ large.
This proves the lemma. 

\end{proof}

Now by the construction of stable manifolds in the theory of non-uniformly partially hyperbolic systems, we get the following lemma.
\begin{lemma} \label{lemma pesin set of uniform bound}
For each $C, \varepsilon > 0$, there exist constants $l, \rho > 0$ such that for any $x$ in a co-null set of $M$, any $\omega \in \Delta_{C,\varepsilon}(x)$, 
$\cWs_{\omega}(x)$ is $(l, \rho)-$good.

\end{lemma}
\begin{proof}
This is a consequence of Theorem 2.1.1 in \cite{P}.
\end{proof}

Now Proposition \ref{prop uniform to uniform bound} follows from Lemma \ref{good angles}, \ref{lemma pesin set large} and \ref{lemma pesin set of uniform bound}.

\end{proof}

\subsection{A criterion for metric transitivity} \label{A criteria for metric transitivity}

The following proposition is at the core of this section.

\begin{prop} \label{main prop}
Let  $\epsilon > 0$ be fixed, as well as an integer $m \geq 2$. Let $\{f_i\}_{i=1}^{m}$  be an IFS  consisted of diffeomorphisms in $\diff^{1+\epsilon}(M,\mu)$. If there exist $l, \rho, \theta > 0$ such that for any $x$ in a co-null subset of $M$, for any hyperplane $E \subset T_{x}M$, we have
\begin{eqnarray} \label{condition un bounded st manfld}
\PP(\mbox{$\cWs_{\omega}(x)$  is $(l, \rho, \theta)-$ regular with respect to $E$}) > \frac{1}{2}.
\end{eqnarray}
Then any invariant closed set for the IFS $\{f_i\}_{i=1}^{m}$ having positive volume equals to $M$. In particular, $\{f_i\}_{i=1}^{m}$ is transitive.
\end{prop}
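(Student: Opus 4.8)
\textbf{Proof proposal for Proposition \ref{main prop}.}

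The plan is to argue by contradiction: suppose $\cal{K} \subsetneq M$ is a closed $\{f_i\}$-invariant set with $\mu(\cal{K}) > 0$, and derive a contradiction from the density point / stable-manifold structure. First I would pick a Lebesgue density point $x_0 \in \cal{K}$ of $\cal{K}$ at which the conclusion \eqref{condition un bounded st manfld} holds (a co-null set, so such points are dense in $\cal{K}$), and also a point $y_0 \notin \cal{K}$; since $M \setminus \cal{K}$ is open, there is a ball $B(y_0, \delta_0)$ disjoint from $\cal{K}$. The key mechanism is that invariance of $\cal{K}$ under the IFS forces $\cWs_{\omega}(x_0) \cap \cal{K}$ to be essentially all of $\cWs_{\omega}(x_0)$ for $\PP$-typical $\omega$: indeed if $y \in \cWs_{\omega}(x_0)$ then $d(f^n_\omega(x_0), f^n_\omega(y)) \to 0$, and if in addition $f^n_\omega(x_0)$ returns near a density point of $\cal{K}$ then $y$ cannot escape $\cal{K}$ — one makes this precise by a Fubini/absolute-continuity argument using $\textbf{(AC)}$: the conditional measure of $\cal{K}$ on a typical stable leaf through a density point has full measure on that leaf.

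Then I would use the transversality input. Fix a small disk $\cal{V}_2 \subset B(y_0,\delta_0)$ of dimension $d-k$ (where $k = \dim \cWs_\omega(x_0)$; in the relevant hyperplane case $k=1$ and $\cal{V}_2$ is a hypersurface) disjoint from $\cal{K}$, and a parallel disk $\cal{V}_1$ through $x_0$. Because the stable leaves are uniformly $(l,\rho,\theta)$-regular with probability $> 1/2$ — in particular they have uniform size $\sim l$ and make angle $\geq\theta$ with the prescribed hyperplane $E = T_{x_0}\cal{V}_1$ — for $x$ ranging over a positive-measure subset of $\cal{V}_1 \cap \cal{K}$ near $x_0$ (which exists since $x_0$ is a density point) and for a set of $\omega$ of measure $> 1/2$, the leaf $\cWs_\omega(x,\varrho)$ reaches across to $\cal{V}_2$ and intersects it transversally. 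Now apply the holonomy map $p : \cal{U} \to \cal{V}_2$ along stable leaves, which by $\textbf{(AC)}$ is absolutely continuous. Since each such leaf $\cWs_\omega(x)$ lies in $\cal{K}$ up to a null subset (by the previous paragraph), the holonomy image $p(\cal{U} \cap \cal{K})$ has positive $Vol_{\cal{V}_2}$-measure and is contained in $\cal{K}$ — contradicting $\cal{V}_2 \cap \cal{K} = \emptyset$. To convert "transitive" from "every positive-volume invariant closed set is $M$": if $\{f_i\}$ were not transitive, the closure of a typical orbit would be a proper closed invariant set; iterating and using that the ergodic decomposition of $\mu$ is carried by such sets, one produces a proper closed invariant set of positive volume, contradiction.

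The main obstacle is the first step: showing rigorously that stable leaves through density points of $\cal{K}$ are contained in $\cal{K}$ modulo null sets. The subtlety is that "$\cal{K}$ is invariant under the semigroup" is a purely topological condition, while the density-point argument wants a measure-theoretic grip along the leaves; bridging them requires carefully combining the exponential contraction along $\cWs_\omega$, the absolute continuity $\textbf{(AC)}$, and a Fubini argument over the base $\Omega$ to promote "positive measure of good $(\omega,x)$" to "full conditional measure on a typical leaf." I would handle this by first establishing that the set of $(\omega, x) \in \Lambda_0$ with $x \in \cal{K}$ and $\cWs_\omega(x) \subset \cal{K}$ has full $(\PP\times\mu)$-measure within $\{x \in \cal{K}\}$ — using that for $\mu$-a.e. density point $x$ the forward orbit $f^n_\omega(x)$ returns to the density-point set of $\cal{K}$ infinitely often (a Poincaré-recurrence / Birkhoff argument for the measure-preserving map $T$), and that any $y$ with $d(f^n_\omega(x), f^n_\omega(y)) \to 0$ exponentially must then itself satisfy $f^n_\omega(y) \in \cal{K}$ for large $n$, hence $y \in \cal{K}$ by closedness and invariance. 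Once this leafwise containment is in hand, the holonomy/absolute-continuity step is routine.
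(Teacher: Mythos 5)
Your overall plan shares the paper's skeleton (density points of the invariant set, the regularity hypothesis forcing stable leaves to reach into the complement, absolute continuity / holonomy, and a Poincar\'e recurrence argument on the skew product $T$), but the step you yourself flag as ``the main obstacle'' is exactly where the argument breaks, and the fix you propose for it does not work. You want to show that for a.e.\ $(\omega,x)$ with $x\in\cal K$ the whole leaf $\cWs_\omega(x)$ lies in $\cal K$, arguing that if $y\in\cWs_\omega(x)$ then $d(f^n_\omega(x),f^n_\omega(y))\to 0$, $f^n_\omega(x)$ returns near density points of $\cal K$, ``hence $f^n_\omega(y)\in\cal K$ for large $n$, hence $y\in\cal K$.'' This inference is false: being close to a density point of $\cal K$ is not being in $\cal K$ (density points carry no open-neighbourhood information), so $f^n_\omega(y)$ may sit arbitrarily close to $\cal K$ and still never enter it. The same issue resurfaces at the end, where ``each leaf lies in $\cal K$ up to a null subset'' is used to conclude that a specific point of the leaf --- its transversal intersection with $\cal V_2$ --- lies in $\cal K$: a full-measure statement on a one-dimensional leaf does not control one chosen point.

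The paper sidesteps the leafwise-containment claim entirely by running the recurrence argument in the opposite direction: on the side of the \emph{complement}, where the relevant point is the one whose recurrence is used. Having placed a density point $z$ of $\Gamma$ near the boundary so that a positive $\PP$-probability of $(l,\rho,\theta)$-regular leaves $\cWs_\omega(z)$ necessarily exit $\Gamma$, one applies absolute continuity of the stable holonomy $\textbf{(AC)}$ to push the positive $\mu$-measure of $\Gamma$ near $z$ along those leaves onto a set $\cal K_\omega\subset\Gamma^c$ of positive volume, each of whose points $w$ lies on $\cWs_\omega(z')$ for some $z'\in\Gamma$. After a Fubini step this produces a positive $(\PP\times\mu)$-measure set of pairs $(\omega,w)$ with $w\in\Gamma^c$ on a stable leaf of a $\Gamma$-point. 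Poincar\'e recurrence for $T$ on $\Omega\times M$ then gives such a pair with $\liminf_n d(w,f^n_\omega(w))=0$; since $d(f^n_\omega(w),f^n_\omega(z'))\to 0$ and $f^n_\omega(z')\in\Gamma$, one gets $d(w,\Gamma)=0$, contradicting $w\in\Gamma^c$ open. Note that the contradiction is reached without ever asserting that any forward image of $w$ lies in $\Gamma$ --- only that $\Gamma$-points accumulate at $w$. That is precisely the distinction your argument blurred, and it is why the recurrence must be applied to the points \emph{outside} $\Gamma$ (where you have a positive-measure set via $\textbf{(AC)}$, so recurrence is available a.e.) rather than invoked to upgrade proximity to membership for points on a single leaf.
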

\begin{proof}
Assume to the contrary that there exists a closed set $\Gamma \subsetneqq M$ that is invariant under $f_i$ for all $1 \leq i \leq m$, and $\mu(\Gamma) > 0$. Denote $\Gamma'$ the set of Lebesgue density point of $\Gamma$. Since $f_i$ is nonsingular with respect to $\mu$ for all $1\leq i\leq m$, we have $f_i(\Gamma') = \Gamma'$ for all $1\leq i\leq m$. Thus replacing $\Gamma$ by $\overline{\Gamma'}$, we can assume that for any $x \in \Gamma$, any open set $U$ containing $x$, we have $\mu(\Gamma \bigcap U) > 0$.

Take an arbitrary $y \in M \setminus \Gamma$ that is within distance $\varrho_{in}$ to $\Gamma$ ( here $\varrho_{in}$ is the injectivity radius of $M$ ), there exists $x \in \Gamma$ such that $d_{M}(x,y) = \inf_{z \in \Gamma} d_{M}(z,y)$. We define the sphere
\begin{eqnarray*}
Z = \{x' \in M | d_{M}(x', y) = d_{M}(x,y)\}.
\end{eqnarray*}
Let $E \subset T_{x}M$ be the hyperplane tangent to $Z$ at $x$. Then there exists an open neighbourhood of $x$, denoted by $U$, such that : for any $z \in U$, any curve $\cC$ through $z$ that is $(l, \rho, \theta)-$ regular with respect to the hyperplane $E' \subset T_{z}M$, where $E'$ is obtained from $E$ through a local parallel translation, we have $\cC \bigcap \Gamma^{c} \neq \emptyset$ ( Since some point in $\cC$ has distance to $y$ smaller than $d_{M}(x,y)$).

Since $f_i$ preserves $\mu$ for all $1\leq i \leq m$,  we know that for $\PP-a.e.$ $ \omega$, the local stable manifold $\cWs_{\omega,loc}(x)$ is defined for $\mu-a.e.$ $x$, and depends measurably on $x$. By Lusin's theorem and the absolute continuity of the stable lamination, we can find a set $\Omega_0 \subset \Omega$ such that

1. $\PP(\Omega_0) > \frac{9}{10}$;

2. For each $\omega \in \Omega_0$, there exists  $\Gamma_{\omega} \subset \Gamma$, such that : 

(2.I) $\mu(\Gamma \setminus \Gamma_{\omega}) < \frac{1}{10}\mu(\Gamma \bigcap U)$;

(2.II) $\cWs_{\omega,loc}(z)$ is a $C^{1}-$manifold and depends continuously on $z$ restricted to $z \in \Gamma_{\omega}$;  

(2.III) For any $z \in \Gamma_{\omega}$, any open set $V$ containing $z$, we have $\mu(\Gamma_{\omega} \bigcap V) > 0$;

(2.IV) $\Gamma_{\omega} \subset \bigcup_{k=1}^{d-1}\Gamma_k$. For any $1 \leq  k\leq d-1$, $\Gamma_{\omega} \bigcap \Gamma_k$ is a closed set and $\cWs_{\omega, loc}$ restricted to $\Gamma_{\omega} \bigcap \Gamma_{k}$ is an absolutely continous lamination. Here $\Gamma_k$ is defined in  \eqref{term 101}.

We claim that there exists $z \in \Gamma \bigcap U$ such that 
\begin{eqnarray} \label{find z}
\PP(z \in \Gamma_{\omega}) > \frac{3}{4}.
\end{eqnarray}
Indeed, we denote $\overline{\mu} = \frac{1}{\mu(\Gamma \bigcap U)} \mu|_{\Gamma \bigcap U}$ the normalised volume form restricted to $\Gamma \bigcap U$ ( this is well-defined because $\mu(\Gamma \bigcap U) > 0$), then
\ary \label{term 2030}
(\PP \times \overline{\mu})\{(\omega, z) | z \in \Gamma_{\omega}\} \geq \PP(\Omega_0) \times \frac{9}{10} > \frac{3}{4}.
\eary
By Fubini's theorem, there exists $z \in \Gamma \bigcap U$ that satisfies \eqref{find z}.

From the hypothesis of the proposition, we can assume without loss of generality that
\begin{eqnarray*}
\PP(\mbox{$\cWs_{\omega}(z)$ is $(l, \rho, \theta)-$regular with respect to $E'$}) > \frac{1}{2}.
\end{eqnarray*}
where $E' \subset T_{z}M$ is obtained from $E$ through a local parallel translation.
Then by \eqref{term 2030},
\begin{eqnarray*}
\PP(z \in \Gamma_{\omega}\bigcap U \mbox{, $\cWs_{\omega}(z)$  is $(l, \rho, \theta)-$ regular with respect to $E'$}) > \frac{1}{4}.
\end{eqnarray*}
By the choice of $U$, we see that
\begin{eqnarray*}
\PP(z \in \Gamma_{\omega}\bigcap U \mbox{,  $\cWs_{\omega}(z) \bigcap \Gamma^{c} \neq \emptyset$}) > \frac{1}{4}.
\end{eqnarray*}
By (2.IV), we can assume that $z \in \Gamma_{\omega} \bigcap \Gamma_k$ for some $1 \leq k \leq d-1$.
By (2.II), for each $\omega \in \Omega_0$ such that $z \in \Gamma_{\omega}$, there exists an open set $V_{\omega}$ containing $z$ such that: for any $z' \in V_{\omega} \bigcap \Gamma_{\omega}\bigcap \Gamma_k$, $\cWs_{\omega}(z')$ is defined and $\cWs_{\omega}(z') \bigcap \Gamma^c \neq \emptyset$.
  
We choose a local submanifold $Q$ contained in $\Gamma^c$ of dimension $d-k$ through a point in $\cWs_{\omega, loc}(z) \bigcap \Gamma^c$, uniformly transverse to the lamination $\cWs_{\omega, loc}$ on $V_{\omega} \bigcap \Gamma_{\omega} \bigcap \Gamma_k$.

By (2.II), after possibly reducing the size of $V_{\omega}$, we can assume that $\cWs_{\omega, loc}(z')$ transversally intersects $Q$ for all $z' \in V_{\omega} \bigcap \Gamma_{\omega} \bigcap \Gamma_k$.
By (2.III), we have   $\mu(V_{\omega} \bigcap \Gamma_{\omega} \bigcap \Gamma_k )>0$.

By the absolute continuity of the lamination $\cWs_{\omega, loc}$, there exists a set $Q_0 \subset Q$ of positive submanifold volume such that any $w \in Q_0$ belongs to $\cWs_{\omega, loc}(z')$ for some $z' \in V_{\omega}\bigcap\Gamma_{\omega} \bigcap \Gamma_k$.
The same is true for all submanifold $Q'$ sufficiently close to $Q$, that is, there exists a set $Q'_0 \subset Q'$  of positive submanifold volume such that any $w' \in Q'_0$ belongs to $\cWs_{\omega, loc}(z')$ for some $z' \in V_{\omega}\bigcap\Gamma_{\omega} \bigcap \Gamma_k$.

Thus there exists a set $\cal{K}_{\omega} \subset \Gamma^c$ of positive volume such that for each $w \in \cal{K}_{\omega}$, there exists $z' \in V_{\omega} \bigcap \Gamma_{\omega}$ such that $w \in \cWs_{\omega}(z')$. \clb
  Then
\begin{eqnarray*}
\PP(\mu(w \in \Gamma^c | \mbox{there exists $z' \in \Gamma$ such that $w \in \cWs_{\omega}(z')$} ) > 0) > 0.
\end{eqnarray*}
By Fubini's theorem, we have
\begin{eqnarray*}
\mu(w \in \Gamma^c | \PP(\mbox{there exists $z' \in \Gamma$ such that $w \in \cWs_{\omega}(z')$}) > 0) > 0.
\end{eqnarray*}

Recall that $\PP \times \mu$ is $T-$invariant.
By Poincar\'e reccurence theorem, for $(\PP \times \mu)-a.e.$ $(\omega, w)$, the trajectory $\{f_{\omega}^{n}(w)\}_{n \in \N}$ accumulates at $w$.
Then there exist $w \in \Gamma^c$, $z' \in \Gamma$, such that $w \in \cWs_{\omega}(z')$ and $\liminf_{n \to \infty}d(w, f^{n}_{\omega}(w)) = 0$. Then $\liminf_{n \to \infty}d(w, f^{n}_{\omega}(z')) =0$. This gives a contradiction since $f^{n}_{\omega}(z') \in \Gamma$ for all $n$ and $d_M(w, \Gamma) > 0$.
\end{proof}

We are now ready to prove Proposition \ref{prop main criterion}.
\begin{proof}[Proof of Proposition \ref{prop main criterion} :]
By combining Proposition \ref{prop uniform to uniform bound} and \ref{main prop}.
\end{proof}

\section{Transversality and growth}\label{Transversality and growth}

In this section, we show how to verify conditions in Proposition \ref{prop main criterion} under the presence of a uniformly dominated splitting. The content of this section is only used to prove Theorem \ref{thm transitive group}.

Recall that $M$ is a smooth compact Riemannian manifold of dimension $d$.
\begin{defi}
Given any integer $m \geq 2$,$1\leq \ell \leq d-1$, real number $\eta > 0$.  A continous $\ell-$subspace distribution $P$ is a continuous function from $M$ to $Gr(M,\ell)$, assigning each point $x \in M$ to $P(x) \in Gr(T_{x}M, \ell)$. We say a set of diffeomorphisms $\{f_{i}\}_{i=1}^{ m}$ is \textit{$\eta-$nontransverse to $P$} if the following holds: for any $x \in M$, any $E \in Gr(T_{x}M, d-\ell)$, we have
\begin{eqnarray*}
\#\{ i   | \mbox{ $Df_{i}(x,E)$ is transverse to $P(f_i(x))$ }\} > (1-\eta)m.
\end{eqnarray*}
\end{defi}

Proposition \ref{main prop} can be combined with the following proposition to construct transitive IFS.

\begin{prop} \label{prop unif to trans}
Given $\epsilon > 0$, integer $b \in [1, d-1]$ and  $g \in \textnormal{\Diff}^{1+\epsilon}(M, \mu)$ having a uniformly dominated splitting  $TM = E_{1} \oplus E_{2}$
with $\dim(E_{1}) = b$.
Let $\bar{\chi}_1 < 0,\hat{\chi}_1 $ satisfy \eqref{ds ineq 3} and \eqref{ds ineq 32} for $g$ in place of $f$. Take any $\hat{\chi} > \norm{f}_{C^1}$.
 Then for any \begin{eqnarray} \label{eta eta}
 \eta \in (0, \frac{1}{8}\min(\frac{\hat{\chi}_1 - \bar{\chi}_1}{ \hat{\chi} - \bar{\chi}_1}, \frac{ - \bar{\chi}_1}{ \hat{\chi} - \bar{\chi}_1},\frac{\hat{\chi}_1 - \bar{\chi}_1}{ \hat{\chi} + \hat{\chi}_1}, \frac{ - \bar{\chi}_1}{ \hat{\chi} + \hat{\chi}_1} ))
 \end{eqnarray} for any integer $L > 0$, any set of diffeomorphisms $\{ h_1,\cdots, h_L \} \subset \textnormal{\Diff}^{1+\epsilon}(M,\mu)$ which is $\eta-$nontransverse to $E_{1}$,
there exist integers $K, n_0 \geq 1$, real numbers $\kappa_1 > 0, \kappa_2 \in (-\infty, \kappa_1)$  such that IFS  $\{g^{K}h_i  \}_{i=1}^{L}$ is $(n_0, \kappa_1, \kappa_2, b)-$ uniform.
\end{prop}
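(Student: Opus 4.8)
The plan is to show that for a suitably large power $K$ of $g$ composed with any of the $h_i$, the averaged quantities $C(x,E,n_0)$ and $D(x,E,n_0)$ from the definition of uniformity have the required signs, using the domination of the splitting $TM = E_1 \oplus E_2$ together with the $\eta$-nontransversality hypothesis. First I would fix $x \in M$ and a $(d-b)$-subspace $E \subset T_x M$, and analyze the action of $Dg^K$ on $E$ and on $E^\perp$. The key point is that, because $\dim E_1 = b$ and $E$ has codimension $b$, for a generic choice of $E$ the intersection $E \cap E_1(x)$ is trivial, so $E$ projects isomorphically onto $E_2(x)$ along $E_1(x)$; under $Dg^K$, vectors in $E$ that have a nonzero $E_2$-component grow at least like $e^{K\hat\chi_1}$, while the $E_1$-direction is contracted by at most $e^{K\bar\chi_1}$. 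Dually, the orthogonal complement $Df_\omega^n(x,E)^\perp$ is a $b$-dimensional subspace, and one wants the image of $E^\perp$ projected onto it to be contracted; this is where domination (forward contraction of $E_1$ relative to $E_2$) makes the transverse projection shrink at rate controlled by $\bar\chi_1 - \hat\chi_1 < 0$, provided the relevant subspace has become nearly aligned with $E_1$.

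The second, and main, ingredient handles the $h_i$'s. A single step $g^K h_i$ could in principle destroy the good configuration: after applying $Dh_i$, the subspace $Dh_i(x,E)$ might become nearly tangent to $E_1(h_i(x))$, in which case $Dg^K$ would contract rather than expand it. This is exactly what $\eta$-nontransversality forbids for all but an $\eta$-fraction of the indices $i$: for at least $(1-\eta)L$ of the maps, $Dh_i(x,E)$ is transverse to $E_1(h_i(x))$, hence has a definite $E_2$-component, and then $Dg^K$ (for $K$ large) expands it at a rate close to $\hat\chi_1$, while the transverse contraction rate is close to $\bar\chi_1$. The range of $\eta$ in \eqref{eta eta} is calibrated so that, after averaging over $i \in \{1,\dots,L\}$ and over the Bernoulli measure $\PP$, the contribution of the bad $\eta$-fraction (where one only has the crude bounds $\|Df\|_{C^1} \le \hat\chi$ in one direction and $\bar\chi_1$ from domination) is outweighed: one gets
\begin{eqnarray*}
\frac{1}{K}C(x,E,1) < (1-8\eta)\bar\chi_1 + 8\eta\,\hat\chi < 0,
\end{eqnarray*}
(up to an $O(1/K)$ error), and similarly $\frac{1}{K}D(x,E,1)$ stays bounded below, by a quantity involving $\hat\chi_1$ and the bad-set penalty. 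One then sets $n_0$ to absorb the $O(1/K)$ errors — or rather, takes $K$ large first, then $n_0 = 1$ in the composed system, and reads off $\kappa_1, \kappa_2$ from the resulting strict inequalities. I would also need to treat the case $E \cap E_1(x) \neq \{0\}$ separately, but here domination already gives that $E$ contains a vector contracted at rate $\le \bar\chi_1$ only in the $E_1$-part; the point is that $\inf_{u \in U(E)}\|Dg^K(x,u)\|$ is still controlled from below because $E$ cannot lie entirely inside $E_1$ when $\dim E = d-b > d - b$... more carefully, $\dim(E \cap E_1) \le b - 1$ so $E$ always has a component in $E_2$, giving the lower bound for $D$.

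The technical heart — and the step I expect to be the main obstacle — is making quantitative the claim that ``$Dg^K(x,E)$ becomes nearly aligned with $E_2$ and its orthogonal complement nearly aligned with $E_1$, uniformly in $x$ and $E$, at a rate depending only on $\bar\chi_1, \hat\chi_1$,'' and then propagating this alignment correctly through one $h_i$-step and into the \emph{averaged, multiplicative} estimates for $C$ and $D$. This requires a graph-transform / cone-field argument on the Grassmannian $Gr(M, d-b)$ using the lift $\GG(g)$ from Definition \ref{def gras}, showing that the ``cone around $E_2^\perp = $ complements transverse to $E_1$'' is forward-invariant and uniformly contracted by $\GG(g)$. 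Once that uniform alignment rate is in hand, the averaging over $\{1,\dots,L\}$ and over $\PP$ is bookkeeping with the constraint \eqref{eta eta}, and the first inequality of Lemma \ref{lemma ldt} (applied to the composed IFS) is not needed here — we only need to produce the uniformity, from which Proposition \ref{prop main criterion} and hence Theorem \ref{thm transitive group} follow.
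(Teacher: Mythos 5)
Your overall architecture matches the paper: split the indices into a good set (at least $(1-\eta)L$ of them, by $\eta$-nontransversality) and a bad $\eta$-fraction, use the dominated splitting to get a contraction of order $e^{K\bar\chi_1}$ on the good indices, use the crude bound $e^{K\hat\chi}\|h_i\|_{C^1}$ on the bad indices, average over $i$, and take $K$ large so the good indices dominate. The choice to read off the definition directly with $n_0 = 1$ is equivalent to the paper's telescoping estimate at $n = 0$ — the paper conditions on the first $n$ symbols and reduces to exactly the one-step bound uniformly over $(x', E')$, so there is no real gain or loss either way.

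Where you go astray is in identifying the ``technical heart.'' You propose a graph-transform / cone-field argument on the Grassmannian, quantifying how $Dg^K(x,E)$ becomes aligned with $E_2$, and flag this as the main obstacle. This is a significant over-complication: no such alignment analysis is needed. The estimate the paper actually uses is a one-paragraph piece of linear algebra. Fix $(x,E)$ with $\angle(E, E_1(x)) > \tau$. Since $\dim E + \dim E_1(x) = d$ and the angle is bounded away from zero, any unit $v \in E^\perp$ decomposes as $v = u_1 + u_2$ with $u_1 \in E_1(x)$, $u_2 \in E$, and $\|u_1\| \leq \frac{\pi}{2}\tau^{-1}$. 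Then
\begin{equation*}
P_{(Dg^q(x,E))^\perp}\bigl(Dg^q(x,v)\bigr) = P_{(Dg^q(x,E))^\perp}\bigl(Dg^q(x,u_1)\bigr),
\end{equation*}
because $Dg^q(x,u_2) \in Dg^q(x,E)$ is killed by the projection, and $\|Dg^q(x,u_1)\| \leq e^{q\bar\chi_1}\|u_1\| \leq \frac{\pi}{2}\tau^{-1} e^{q\bar\chi_1}$ by \eqref{ds ineq 3}. There is nothing to propagate through iterates: the decay is immediate from the $E_1$-contraction, and it does not matter at all where $Dg^q(x,E)$ ends up relative to $E_2$. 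If you tried to push the cone-field route you would be proving a stronger (and unnecessary) convergence statement on $Gr(M,d-b)$ with extra uniformity to check, while the above kills the problem directly.

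Two smaller points. First, your worry about ``treating the case $E \cap E_1(x) \neq \{0\}$ separately'' is misplaced: the nontransversality hypothesis is a statement about $Dh_i(x,E)$ versus $E_1(h_i(x))$, not about $E$ versus $E_1(x)$, so the good/bad dichotomy already handles all $(x,E)$ uniformly — you never need $E$ itself to be transverse to $E_1(x)$, and the decomposition argument above is applied \emph{after} $Dh_i$, to $(h_i(x), Dh_i(x,E))$. Second, the averaged bound should read $(1-\eta)\bar\chi_1 + \eta\hat\chi$ (plus $O(1/K)$ constants), not $(1-8\eta)\bar\chi_1 + 8\eta\hat\chi$; the factor $\frac{1}{8}$ in \eqref{eta eta} provides the slack to absorb both the $\log C_\tau$, $\log A$ constants in the exponential estimate and the gap between the provisional rate and the final $\kappa_1 = K(-\bar\chi_1 - \frac{1}{3}\xi)$, rather than entering the average itself.
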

\begin{proof}
By \eqref{ds ineq 3} and \eqref{ds ineq 32} in Definition \ref{def partially hyperbolic}, we have
\begin{eqnarray*}
\xi : =  \frac{1}{2}\min((\hat{\chi}_1- \bar{\chi}_1 ), - \bar{\chi}_1 )> 0
\end{eqnarray*}
and
\begin{eqnarray*}
\sup_{v \in E_{1}\setminus \{0\}}\frac{\norm{Dg(v)}}{\norm{v}} < e^{\bar{\chi}_1} < e^{\hat{\chi}_1}  < \inf_{u \in E_{2}\setminus \{0\}}\frac{\norm{Dg(u)}}{\norm{u}}.
\end{eqnarray*}
We set \begin{align} \label{A def} A = \sup_{1 \leq i \leq L}( \norm{h_i}_{C^1} , \norm{h_i^{-1}}_{C^1} ). \end{align}

By $\eta-$nontransverse hypothesis and the compactness of $M$, there exists a constant $\tau > 0$ such that
for any $x \in M$, any $E \in Gr(T_{x}M, d-b)$, we have
\begin{align} \label{tau 1}
\#\{ i | \mbox{ $\angle(Dh_{i}(x,E), E_1(h_i(x))) > \tau$}\} > (1-\eta)L.
\end{align}

We claim that there exists $C_{\tau} > 0$ such that for any $x\in M$, any $E \in Gr(T_{x}M, d-b)$ satisfying $\angle(E, E_{1}(x)) > \tau$, for any integer $q  \geq 1$ we have
\begin{align} \label{c 1 tau}
\sup_{v \in U(E^{\perp})}\norm{P_{(Dg^{q}(x,E))^{\perp}}(Dg^{q}(x,v))} \leq C_{\tau}e^{q\bar{\chi}_1}.
\end{align}
Indeed, there exists a unique pair $(u_1,u_2) \in E_1(x) \times E$ such that $v = u_1 + u_2$. By $v \in U(E^{\perp})$ and $\angle(E, E_1(x)) \in (\tau, \frac{\pi}{2}]$, we have $\norm{u_1} = (\sin \angle(u_1,v))^{-1} \leq \frac{\pi}{2}\tau^{-1}$. Then 
\aryst
\norm{P_{(Dg^{q}(x,E))^{\perp}}(Dg^{q}(x,v))} &=& \norm{P_{(Dg^{q}(x,E))^{\perp}}(Dg^{q}(x,u_1))} \\
&\leq& \norm{Dg^{q}(x,u_1)} \leq \frac{\pi}{2}\tau^{-1}e^{q \bar{\chi}_1}.
\earyst
We obtain \eqref{c 1 tau} for $C_{\tau} = \frac{\pi}{2}\tau^{-1}$.

Denote by $h_{(i)} =  g^{K}h_{i}$ for $1 \leq i \leq L$, where $K$ is a large integer whose value will be determined depending solely on $\eta, \xi, C_{\tau}$ and $g$.

Given $l \geq 1$, $(i_s)_{s=1}^{ l} \in \{1,\cdots, L\}^{l}$, we denote
\begin{eqnarray*}
f(i_1, \cdots, i_l) := h_{(i_l)} \cdots h_{(i_1)} = g^{K}h_{i_l} \cdots g^{K}h_{i_1}.
\end{eqnarray*}

For any infinite sequence $\omega = (i_k)_{k\in \N} \in \{1,\cdots, L\}^{\N}$, any $n \geq 1$, we denote
\begin{eqnarray*}
f_{\omega}^{n} &=& f(i_1, \cdots, i_{n}).
\end{eqnarray*}
It is clear that we have
\begin{eqnarray}  \label{f n+1 f n}
f_{\omega}^{n+1} = g^{K}h_{i_{n+1}} f_{\omega}^n.
\end{eqnarray}

To show that for some $K > 0$, the IFS associated with $\{g^{K}h_i | 1\leq i \leq L \}$ is $(n_0, \kappa_1, \kappa_2, b)-$ uniform for some $n_0, \kappa_1 > 0, \kappa_2 \in (-\infty, \kappa_1)$, it is enough to show that: for sufficiently large $K > 0$, there exist $\kappa_1 >0, \kappa_2 \in (-\infty, \kappa_1)$ such that for any $x\in M$, any $E \in Gr(T_{x}M,d-b)$, any $n \geq 1$, we have
\begin{align}
C(x,E,n+1) - C(x,E,n)
 < -\kappa_1,   \label{negative 1}\\
D(x,E,n+1)-D(x,E,n)
> -\kappa_2.  \label{negative 2}  
\end{align}

Given any $K > 0$, we set $\kappa_{1} = K(  - \bar{\chi}_1  - \frac{1}{3}\xi )$.
We will detail the proof of the first inequality, the second one with $\kappa_2 = -K( \hat{\chi}_1 - \frac{1}{3}\xi )$ follows from a similar argument. We clearly have $\kappa_1 > 0$ and $\kappa_2 < \kappa_1$.

By conditioning on the first $n-$iterations, the inequality \eqref{negative 1} is reduced to the following : 
  there exists $K_0 > 0$, such that for any $K \geq K_0$, for any $x \in M$, any $E \in Gr(T_{x}M,d-b)$, any $n \geq 1$, any $(i_k)_{k=1}^{n}$, the following is true :
\begin{eqnarray} 
&& J \nonumber\\
& :=& \EV(\log \sup_{v \in U(E^{\perp})} \norm{P_{Df_{\omega}^{n+1}(x,E)^{\perp}}(Df_{\omega}^{n+1}(x,v))} - \log \sup_{v \in U(E^{\perp})} \nonumber \norm{P_{Df_{\omega}^{n}(x,E)^{\perp}}(Df_{\omega}^{n}(x,v))} \nonumber \\
&& \quad \quad| \omega_k = i_k \mbox{ for $1\leq k \leq n$}) < -\kappa_1. \label{J negative}
\end{eqnarray}

For any $1 \leq j \leq L$, we set
\begin{eqnarray*}
&&J_{j}\\
& := &\EV(\log \sup_{v \in U(E^{\perp})} \norm{P_{Df_{\omega}^{n+1}(x,E)^{\perp}}(Df_{\omega}^{n+1}(x,v))}  - \log \sup_{v \in U(E^{\perp})} \label{J j p} 
 \norm{P_{Df_{\omega}^{n}(x,E)^{\perp}}(Df_{\omega}^{n}(x,v))} \\
&&| \omega_k = i_k , \forall k \in [1, n], \omega_{n+1} = j).  \nonumber
\end{eqnarray*}

Then we have $J = \frac{1}{L}\sum_{j=1}^{L} J_{j}$.

By \eqref{f n+1 f n}, for each $1 \leq j \leq L$   we have
\begin{eqnarray*}
&&J_{j} \\
&\leq& \EV(\log \sup_{v \in U(E^{\perp})} \frac{\norm{P_{Df_{\omega}^{n+1}(x,E)^{\perp}}(Df_{\omega}^{n+1}(x,v))}}{\norm{P_{Df_{\omega}^{n}(x,E)^{\perp}}(Df_{\omega}^{n}(x,v))}}  \nonumber 
  |\omega_k = i_k , \forall k \in [1,n],\omega_{n+1} = j ) \nonumber \\
&\leq&\EV(\log \sup_{v \in U(Df^{n}_{\omega}(x,E)^{\perp})} \norm{P_{Df_{\omega}^{n+1}(x,E)^{\perp}}(Dg^{K} Dh_{j}(f_{\omega}^n(x),v))}  \nonumber |\omega_k = i_k , \forall k \in [1,n], \omega_{n+1} = j). \label{eq 7}
\end{eqnarray*}

For any $x' \in M$, any $E' \in Gr(T_{x'}M, d-b)$,  any  $j \in \{1,\cdots, L\}$, we set
\begin{align} \label{J j p i x E}
J(j;  x',E') = \log \sup_{v \in U(E'^{\perp})}\norm{P_{(Dg^{K}Dh_{j}(x',E'))^{\perp}}(Dg^{K}Dh_{j}(x',v))}.
\end{align}

Then by \eqref{f n+1 f n}, \eqref{eq 7} and the fact that $f_{\omega}^{n}(x)$ and $Df_{\omega}^{n}(x,E)$ depend only on $(\omega_k)_{k=1}^{n}$ but not on $\omega_{n+1}$, we have
\begin{eqnarray} \label{eq 8}
J \leq\sup_{\substack{ x' \in M \\ E' \in Gr(T_{x'}M, d-b)}} \frac{1}{L} \sum_{j=1}^{L}  J(j ; x',E').
\end{eqnarray}

By the hypothesis that the set of diffeomorphisms $\{h_1,\cdots, h_L\}$ is $\eta-$nontransverse to $E_1$,
for any $x' \in M$, any $E' \in Gr(T_{x'}M,d-b)$, there are more than $(1-\eta)L$ indexes $j$ such that $\angle( Dh_j(x',E'), E_{1}(h_j(x'))) > \tau$. For any such $j$, by  \eqref{c 1 tau}, we have
\begin{align} 
&\sup_{v \in U(E'^{\perp})}\norm{P_{(Dg^{K}Dh_j(x',E'))^{\perp}}(Dg^{K}Dh_j(x',v))}  \nonumber \\
&\leq \sup_{u \in U(Dh_j(x',E')^{\perp})} \norm{P_{(Dg^{K}Dh_j(x',E'))^{\perp}}(Dg^{K}(h_j(x'),u))} \sup_{v \in U(E'^{\perp})}\norm{P_{Dh_j(x',E')^{\perp}}(Dh_j(x',v))} \nonumber \\
 &\leq AC_{\tau}e^{K\bar{\chi}_1}. \label{eq 10000}
\end{align}
For all $1 \leq j \leq L$, we have the trivial bound
\begin{align} \label{eq 109}
\sup_{v \in U(E'^{\perp})}\norm{(P_{(Dg^{K}Dh_j(x',E'))^{\perp}}(Dg^{K}Dh_j(x',v))} \leq e^{K\hat{\chi}}A.
\end{align}
Thus by \eqref{eq 109}, \eqref{eq 10000}, \eqref{eq 8} for any $x' \in M$, any $E' \in Gr(T_{x'}M, d-b)$, we have 
\begin{align} \label{eq 110}
J \leq \frac{1}{L}\sum_{j=1}^{L} J( j; x',E' ) \leq \eta K \hat{\chi} + (1-\eta) ( K \bar{\chi}_{1}  + \log C_{\tau} ) + \log A.
\end{align}

By \eqref{eta eta}, we have $\eta < \frac{\xi}{4( \hat{\chi} - \bar{\chi}_1)}$, thus
\begin{eqnarray*}
J &\leq&(\bar{\chi}_1 + \frac{1}{4}\xi) K + (1-\eta) \log C_{\tau} + \log A.
\end{eqnarray*}
 By letting $K_0$ to be sufficiently large depending on $\hat{\chi}_1, \bar{\chi}_{1}, C_{\tau}, A$, for any $K \geq K_0$, we get \eqref{J negative} for
\begin{eqnarray*}
\kappa_1 =  K(-\bar{\chi}_1 - \frac{1}{3}\xi) > 0.
\end{eqnarray*}
This completes the proof.
\end{proof}

\section{Proof of the main theorems}\label{Proof of the main theorems}

\begin{proof}[Proof of Theorem \ref{thm generalization DK} :]
In the \textit{proof} of \cite{DK} Section 10, Corollary 4, the authors showed that under the hypothesis of Theorem \ref{thm generalization DK}, either $\{f_{\alpha}\}$ is $(n_0,\kappa_1, \kappa_2 )-$uniform for some $n_0,\kappa_1 > 0, \kappa_2 \in (-\infty, \kappa_1)$, in which case the transitivity follows from Proposition \ref{prop main criterion}; or $\{f_{\alpha}\}_{\alpha}$ are simultaneously conjugate to rotations close to the original ones, in which case we have ergodicity ( see \cite{DK} ), and hence transitivity.

More precisely, by letting $r=j=d$ in (45) in the proof of Corollary 4 (a) in \cite{DK}, we verify the $(n_0, \kappa_1, \kappa_2,b)-$uniformity condition for $b=1$,  $\kappa_1 = \lambda_1 - \varepsilon$, $\kappa_2 = \frac{d-3}{d-1}\lambda_1 + \varepsilon$, sufficiently small $\varepsilon$ and sufficiently large $n_0$.
\end{proof}

\begin{proof}[Proof of Theorem \ref{thm transitive group} :]Let $TM = E_1 \oplus E_2$ be the uniformly dominated splitting given by the theorem, with $\dim(E_1 ) = d-\ell$ for some integer $\ell \in \{1,\cdots, d-1\}$. Let  $\bar{\chi}_1,\hat{\chi}_1$ satisfy \eqref{ds ineq 3} and \eqref{ds ineq 32} for $g$ in place of $f$, and let $\hat{\chi} > \log \norm{g}_{C^1}$.

We set
$$d_ {\ell} := \dim Gr(M,\ell) = d+ \ell(d-\ell).$$
To simplify notations, we denote the metrics $d_{M}, d_{Gr(M,\ell)}$ both by $d$. This should cause no confusions.

Without loss of generality, we assume that the map $x \mapsto E_1(x)$ is $\theta-$H\"older, for otherwise we can consider $g^{-1}$ instead of $g$.

By Proposition \ref{prop main criterion}, it suffices to show that there exists an integer $L_0 > 0$ such that for any $L \geq L_0$, for any $L$-tuple $(f_1,\cdots, f_L)$ in  a $C^r$ dense subset of $\Diff^{r}(M,\mu)^L$, $\{f_i\}_{i=1}^{L}$ is $\eta-$nontransverse, where $\eta$ is any constant given by Proposition \ref{prop unif to trans} with $g$ given by the theorem. To prove this, we first construct a finite set of divergence-free vector fields $\{V_{\alpha}\}_{\alpha \in \cal A}$ on $M$ such that the following is true.
There exists a constant $\kappa > 0$, such that for any $(x, E) \in Gr(M, \ell)$, there exists a subset $\{\alpha_1, \cdots, \alpha_{d_{\ell}}\} \subset \cal A$, such that
\begin{eqnarray} \label{construct V} 
|\det[(V_{\alpha_i}(x), DV_{\alpha_i}(x,\cdot | E))_{i=1}^{d_{\ell}}]| > \kappa.
\end{eqnarray}
For the notation $DV_{\alpha_i}(x,\cdot | E)$ we recall \eqref{eq 9022} in Definition \ref{lift vector field}.
Such $\{V_{\alpha}\}_{\alpha \in \cA}$ exists by $d \geq 2$.

For any $B = (B^{(i)})_{i=1}^{ L} \in \R^{L|\cal A|}$, where $B^{(i)} = (B^{(i)}_{\alpha})_{\alpha \in \cA} \in \R^{|\cal A|}$, we define
\begin{eqnarray*}
V^{(i)}(B,x) = \sum_{\alpha \in \cal A} B^{(i)}_{\alpha}V_{\alpha}(x).
\end{eqnarray*}

We denote by $H^{(i)}_{\alpha} = (B^{(j)})_{1 \leq j \leq L} \in \R^{L |\cal A|}$ where $B^{(j)} = \delta_{j=i} (\delta_{\alpha = \beta})_{\beta \in \cA}  \in \R^{\cal A}$ for any $1 \leq  j \leq L$. Here $\delta_{j=i}, \delta_{\alpha = \beta}$ are Kronecker delta functions.  For each $B \in \R^{L|\cal{A}|}$, we define a $L-$tuple denoted by $(f_i(B))_{i=1}^{L} \subset \textnormal{\Diff}^{r}(M,\mu)$ as follows,
\begin{eqnarray*}
f_i(B,x)= \Psi(V^{(i)}(B,\cdot), 1)f_i(x), \quad 1 \leq i \leq L.
\end{eqnarray*}
Recall that $\Psi$ is defined  in Definition \ref{lift vector field}.

By definition, we know that $f_i(0,\cdot) = f_i, \forall 1 \leq i \leq L$.
We will show that there exists arbitrarily small $B \in \R^{L|\cal A|}$ such that $\{f_i(B,\cdot)\}_{i=1}^{L}$ is $\eta-$nontransverse.

For any $(x,E) \in Gr(M, \ell)$, we can construct a map as follows.
\begin{eqnarray*} 
\Phi_{x, E} : \cal D_1 &\to& Gr(M,\ell)^{L} \\
 \Phi_{x,E}(B)&=&  (\GG(f_{i}(B,\cdot))(x,E))_{i=1}^{ L}.
\end{eqnarray*}
Here $\cal D_1$ denotes the unit disk in $\R^{L|\cal A|}$.

Take any $(y,F) \in Gr(M,\ell)$, for any $1 \leq i \leq L$, let $\{\alpha^{(i)}_{1},\cdots, \alpha^{(i)}_{d_{\ell}}\} \subset \cal A$ be the indexes satisfying \eqref{construct V} for $(x,E) = \GG(f_i)(y,F)$.
By differentiating $\Phi_{y,F}$ at the origin, we obtain
\begin{eqnarray} \label{der phi 1}
D_{\alpha^{(i)}_{j}}\Phi_{y,F}(0) = ((V_{\alpha^{(i)}_{j}}(x), DV_{\alpha^{(i)}_j}(x,\cdot |E)\delta_{i=k})_{ k= 1}^{ L}.
\end{eqnarray}
We set $P = \sum_{1 \leq i \leq L, 1 \leq j \leq d_{\ell}} \R H^{(i)}_{\alpha^{(i)}_j}$. Then by \eqref{der phi 1} \eqref{construct V}, we can see that
\begin{eqnarray*}
| \det(D\Phi_{y,F}(0) : P \to \prod_{k=1}^{L}T_{\GG(f_k)(y,F)}Gr(M,\ell)) | > \kappa^{L}.
\end{eqnarray*}
Since $r \geq 2$, $\Phi_{x,E}(B)$ is $C^1$ in $x,E,B$.
By equicontinuity of $D\Phi_{y,F}(B)$ in $y,F,B$, we see that there exists a constant $\epsilon > 0$, $\kappa_0 > 0$ such that $\Phi_{y,F} : B(0,\epsilon) \to Gr(M, \ell)^L$ is a submersion for all $(y,F) \in Gr(M,\ell)$, and  for any $(B, y, F) \in B(0,\epsilon) \times Gr(M,\ell)$, there exists a linear subspace $P' \subset \R^{L|\cal A|}$ of dimension $Ld_{\ell}$,  such that
\begin{eqnarray} \label{lower det}
|\det(D\Phi_{y,F}(B)|_{P'})|> \kappa_0.
\end{eqnarray}
Let $\cD = B(0,\epsilon)$.

We define the following subsets of $Gr(M,\ell)$ and $Gr(M,\ell)^L$.
\begin{eqnarray*}
\Sigma_0 &=& \{(x,E)\in Gr(M,\ell) | E \mbox{ is not transverse to }E_1(x) \}, \\
\Sigma &=& \{(x^i,E^i)_{i=1}^{ L} \in Gr(M,\ell)^L | \#\{i | (x^i,E^i) \in \Sigma_0 \} \geq \eta L \}.
\end{eqnarray*}

Then for any $B \in \cal D$, $\{f_{i}(B,\cdot)\}_{i=1}^{ L}$ is $\eta-$nontransverse if and only if
\begin{eqnarray}\label{nontrans equiv cond}
\Phi_{x,E}(B) \notin \Sigma, \quad \forall (x,E) \in Gr(M,\ell).
\end{eqnarray}

Let $\beta > 0$ and let $\delta > 0$ be a sufficiently small real number, we let $\cal N_{\delta}$ be a  $\delta^{1+\beta}-$net in $Gr(M,\ell)$. Then we have $\cal N_{\delta} < \delta^{-d_{\ell}(1+\beta) - \beta}$ for sufficiently small $\delta$.
We set
\begin{eqnarray}\label{def omega delta}
\Omega_{\delta} = \cD \bigcap (\bigcup_{(x,E) \in \cal N_{\delta}} (\Phi_{x,E})^{-1}(\Sigma_{\delta})),
\end{eqnarray}
where $\Sigma_{\delta}$ is the $\delta-$neighbourhood of $\Sigma$ in $Gr(M,\ell)^L$.
We claim that for any sufficiently large $L$ and any sufficiently small $\beta$ the following holds:

(1) The measure of $\Omega_{\delta}$ tends to $0$ as $\delta$ tends to $0$;

(2) For any sufficiently small $\delta$, any $B \in \cal D \setminus \Omega_{\delta}$, we have \eqref{nontrans equiv cond}.

This will conclude the proof since for any $\varepsilon > 0$, by taking $\delta > 0$ sufficiently small, we can find $B \in \cD \setminus \Omega_\delta$ such that $d_{C^r}(f_i(B,\cdot), f_i) < \varepsilon$ for all $1 \leq i \leq L$. By (2), $\{f_{i}(B,\cdot)\}_{i=1}^{L}$ is $\eta-$nontransverse with respect to $E_1$. Since $V(B,\cdot)$ is divergence free, $f_{i}(B,\cdot) \in \diff^{r}(M, \mu)$ for all $1 \leq i \leq L$. Then by Proposition \ref{prop main criterion}, the IFS $\{ g, f_{1}(B,\cdot),\cdots, f_{L}(B,\cdot)\}$ is transitive.

To see (1), we first show the following lemma.
\begin{lemma}
We have $
 HD(\Sigma_0) < d_{\ell}.$
\end{lemma}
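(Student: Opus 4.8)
The plan is to estimate the Hausdorff dimension of $\Sigma_0$ by exhibiting it as a fiber bundle over $M$ whose fibers are the non-transversality loci in the Grassmannians, and bounding the dimension of those fibers. First I would recall that $\Sigma_0 = \{(x,E) \in Gr(M,\ell) \mid E \cap E_1(x) \neq \{0\}\}$, since $E$ (an $\ell$-subspace) fails to be transverse to $E_1(x)$ (a $(d-\ell)$-subspace) precisely when $E \cap E_1(x) \neq \{0\}$, i.e. $\dim(E + E_1(x)) < d$. The projection $p_\ell : Gr(M,\ell) \to M$ restricts to $\Sigma_0$, and the fiber over $x \in M$ is $\Sigma_0 \cap Gr(T_xM,\ell) = \{E \in Gr(T_xM,\ell) \mid E \cap E_1(x) \neq \{0\}\}$, which is the Schubert-type variety of $\ell$-planes meeting a fixed $(d-\ell)$-plane nontrivially.

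Next I would compute the dimension of this fiber. The set of $\ell$-subspaces $E \subset \R^d$ that meet a fixed $(d-\ell)$-subspace $W$ nontrivially is a closed algebraic subvariety of $Gr(d,\ell)$; stratifying by $j = \dim(E \cap W) \geq 1$, the stratum with $\dim(E\cap W) = j$ has dimension $j(d-\ell-j) + (\ell - j) \cdot \ell = \dim Gr(d,\ell) - j(j + \ell - (d-\ell)) \cdot (\text{something})$; more cleanly, the generic stratum $j=1$ has codimension exactly $d - 2\ell + 1$ when $2\ell \le d$ (and is nonempty, and of strictly smaller dimension, whenever the bundle is nontrivial, i.e. $\ell \in \{1,\dots,d-1\}$ with $d \ge 2$). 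In all cases the fiber has dimension at most $\dim Gr(T_xM,\ell) - 1$. Since $E_1$ is merely continuous (indeed $\theta$-H\"older), $\Sigma_0$ need not be smooth, but it is a closed set, and I would bound its Hausdorff dimension by covering: $Gr(M,\ell)$ is locally a product $M \times Gr(d,\ell)$ via a local trivialization, and under such a chart $\Sigma_0$ is contained in $\{(x,E) \mid E \in \mathcal{S}(W(x))\}$ where $W(x)$ depends continuously on $x$ and $\mathcal{S}(W)$ is the (uniformly in $W$) codimension-$\geq 1$ Schubert variety. A continuous family of sets of Hausdorff dimension $\leq \dim Gr(d,\ell) - 1$ over a $d$-dimensional base has Hausdorff dimension at most $d + \dim Gr(d,\ell) - 1 = d_\ell - 1 < d_\ell$.

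The remaining issue, and the one I expect to be the main obstacle, is making the last covering argument rigorous when $E_1$ is only H\"older rather than Lipschitz: a na\"ive product bound gives $HD(\Sigma_0) \le d + (d_\ell - d) \cdot \frac{1}{\theta}$ if one parametrizes the fiber variety by H\"older-distorted coordinates, which could exceed $d_\ell$. The correct approach is to avoid distorting the fiber: fix a local orthonormal frame for $E_1$ depending continuously on $x$, and observe that in the resulting chart for $Gr(M,\ell)$ the set $\Sigma_0$ becomes exactly $M \times \mathcal{S}$ where $\mathcal{S} \subset Gr(d,\ell)$ is the \emph{fixed} Schubert variety of $\ell$-planes meeting the fixed coordinate $(d-\ell)$-plane. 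The only non-smoothness entering is then in the chart map itself, which is a homeomorphism of an open subset of $Gr(M,\ell)$ with an open subset of $M \times Gr(d,\ell)$ that is bi-H\"older; but a bi-H\"older homeomorphism cannot increase Hausdorff dimension past the ambient dimension if the set in question is contained in a \emph{smooth submanifold} of the model, and here $M \times \mathcal{S}$ is a finite union of smooth submanifolds each of dimension $\le d_\ell - 1$. So I would: (i) reduce to this local model; (ii) cover $\Sigma_0$ by finitely many such charts; (iii) in each chart bound $HD(M \times \mathcal{S}) \le d_\ell - 1$ by the smooth stratification of the Schubert variety, using that $d \ge 2$ guarantees $\mathcal{S} \subsetneq Gr(d,\ell)$; (iv) conclude $HD(\Sigma_0) \le d_\ell - 1 < d_\ell$ by countable stability of Hausdorff dimension. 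The key point ensuring strictness is exactly the hypothesis $d \ge 2$ and $\ell \le d-1$, which makes the non-transversality condition a nontrivial algebraic constraint.
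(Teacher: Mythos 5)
Your setup is correct and the geometric picture you identify is the right one: $\Sigma_0$ is, fiberwise, the codimension-one Schubert variety of $\ell$-planes meeting $E_1(x)$, and the difficulty is that $x\mapsto E_1(x)$ is only $\theta$-H\"older. You also correctly sense that a na\"ive product/H\"older bound is dangerous. But your proposed fix is wrong, and the way it is wrong is instructive: you never invoke the standing hypothesis $\theta\in(\tfrac{d}{d+1},1)$, and the lemma is simply not true without some such lower bound on $\theta$.

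The problematic step is the assertion that ``a bi-H\"older homeomorphism cannot increase Hausdorff dimension past the ambient dimension if the set in question is contained in a smooth submanifold of the model,'' from which you conclude $HD(\Sigma_0)\le d_\ell-1$. The trivial cap $HD\le d_\ell$ is of course true for any subset, but that is not what you need; you need strict inequality, and you try to get $\le d_\ell-1$. This does not follow. A chart for $Gr(M,\ell)$ built from a H\"older frame of $E_1$ is of the form $\Phi(x,E)=(x,A(x)E)$ with $A(\cdot)$ only $\theta$-H\"older in $x$ (and Lipschitz in $E$). Such a map can, and generically does, inflate the Hausdorff dimension of smooth submanifolds: the image of a $d$-dimensional slice $\{(x,E_0)\}$ is a graph of a $\theta$-H\"older map from $M$ into $Gr(d,\ell)$, whose Hausdorff dimension can be as large as about $d+\ell(d-\ell)(1-\theta)>d$. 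Covering $\Phi(U\times\mathcal{S})$ honestly (a $\delta^{1/\theta}$-net in $U$, then $\delta$-balls in the fiber where $\mathcal S$ has codimension one) yields a box-counting bound of the order $d/\theta+\ell(d-\ell)-1$, which strictly exceeds $d_\ell-1$ whenever $\theta<1$, and is $\ge d_\ell$ whenever $\theta\le d/(d+1)$. So your claimed bound $d_\ell-1$ is not attainable by this route, and without using $\theta>\tfrac{d}{d+1}$ there is no route at all.

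The paper's argument uses exactly the covering just described, but with the exponent tuned to the hypothesis: choose $\beta'\in(\theta^{-1},\tfrac{d+1}{d})$ (nonempty precisely because $\theta>\tfrac{d}{d+1}$), take a $\delta^{\beta'}$-net in $M$ of cardinality $O(\delta^{-d\beta'})$, cover each fiber $A(x)$ by $O(\delta^{-\ell(d-\ell)+1})$ many $\delta$-balls, and use H\"older continuity to guarantee that fibers over nearby base points line up to within $\lesssim\delta^{\theta\beta'}<\delta$. The resulting $2\delta$-net of $\Sigma_0$ has cardinality $O(\delta^{-d\beta'-\ell(d-\ell)+1})$, giving $HD(\Sigma_0)\le d\beta'+\ell(d-\ell)-1<d_\ell$, with the last strict inequality coming from $\beta'<\tfrac{d+1}{d}$. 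The moral: the asymmetry (H\"older only in the base direction, smooth in the fiber) is what saves the day, but it has to be exploited by choosing different covering scales in the two directions, not hidden inside a bi-H\"older chart.
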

\begin{proof}
Recall that  the map $x \mapsto E_1(x)$ is $\theta-$H\"older.
Take any $\beta' \in (\theta^{-1} , \frac{d+1}{d})$. For small $\delta > 0$, we choose a $\delta^{\beta'}-$net in $M$, denoted by $\cal{M}$ such that $\#\cal{M} = O(\delta^{-d\beta'})$. For each $x \in \cal{M}$, the subset of $Gr(T_{x}M, \ell)$ defined as $A(x) = \{ E\in Gr(T_{x}M, \ell) | E \mbox{ is not transverse to }E_{1}(x)   \}$ has dimension $\ell(d-\ell)-1$. Thus $A(x)$ can be covered by $O(\delta^{-\ell(d-\ell)+1})$ many $\delta-$balls. For each $y \in M$, there exists $x \in \cal{M}$ such that $d(x,y) < \delta^{\beta'}$, thus $d((x,E_1(x)), (y,E_1(y))) \lesssim\delta^{\theta \beta'}$. Since $\theta \beta' > 1$, when $\delta'$ is sufficiently small, we can choose $\cal{E}$, a subset of $\bigcup_{x \in \cal{M}} A(x)$, such that : 1. $\# \cal{E}  = O( \delta^{-d\beta' - \ell(d-\ell) +1})$; 2. $\cal{E}$ forms a $2\delta-$net in $\Sigma_0$. By the choice of $\beta'$, we see that $d \beta' -1 + \ell(d-\ell) < d + \ell(d-\ell) = d_{\ell}$. Since $\delta$ can be arbitrarily small, this implies that $HD(\Sigma_0) < d_l$.
\end{proof}

Let $\xi > 0$ be a real number such that $HD(\Sigma_0) < d_{\ell} - \xi$.
Then we have
\begin{eqnarray*}
HD(\Sigma) \leq \eta L HD(\Sigma_0) + (L - \eta L) d_{\ell} < L d_{\ell} - \eta L \xi.
\end{eqnarray*}
Take an arbitrary constant $\beta > 0$. Then it is a classical fact that there exists a constant $C_1 = C_1(\Sigma, \beta) > 0$ such that the volume of $\Sigma_{\delta}$ satisfies
\begin{eqnarray*}
|\Sigma_{\delta}| < C_1\delta^{ \eta L \xi - \beta}.
\end{eqnarray*}
Then by \eqref{lower det}, we see that there exists $C_2 =C_2(C_1,\kappa_0)$ such that,
\begin{eqnarray*}
|(\Phi_{x,E})^{-1}(\Sigma_{\delta})| < C_2 \delta^{\eta L \xi - \beta}.
\end{eqnarray*}
Then by \eqref{def omega delta}, there exists $C_3 = C_3(C_2,\beta)$ such that
\begin{eqnarray}\label{def omega deltafff}
|\Omega_{\delta}| < C_3 \delta^{- d_{\ell}(1+\beta) - \beta + \eta L \xi - \beta}
\end{eqnarray}
Then by letting $L > \eta^{-1}\xi^{-1}d_{l}$ and $\beta$ sufficiently small, we can see that $|\Omega_{\delta}| \to 0$ as $\delta \to 0$. This proves (1).

To see (2), we take an abitrary $(x,E) \in Gr(M,\ell)$ and $B \in \cD \setminus \Omega_{\delta}$. Then there exists $(y, F) \in \cal N_{\delta}$ such that $$d((x,E), (y,F)) < \delta^{1+\beta}.$$
Then by $B \notin \Omega_{\delta}$, we see that $\Phi_{y,F}(B) \notin \Sigma_{\delta}$.
We notice that by $r \geq 2$, the function $\GG(f_i(B, \cdot)) : Gr(M,\ell) \to Gr(M,\ell)$ is $C^1$ with $C^1$ norm uniformly bounded for all $1 \leq i \leq L$ and $B \in \cal D$. Thus there exists $C > 0$ independent of the choices of $(x,E), (y,F)$ such that
$$d(\Phi_{x,E}(B), \Phi_{y,F}(B)) < C d((x,E), (y,F)) <  C \delta^{1+\beta}, \quad \forall B \in \cal D.$$
This implies that for all sufficiently small $\delta > 0$, we have
$$d(\Phi_{x,E}(B), \Sigma) > d(\Phi_{y,F}(B), \Sigma) - d(\Phi_{x,E}(B), \Phi_{y,F}(B)) \geq \delta- C\delta^{1+\beta} > 0.$$
This concludes the proof of (2), and thus concludes the proof of Theorem \ref{thm transitive group}.

\end{proof}

\
\

{\footnotesize \noindent Zhiyuan Zhang\\
Institut de Math\'{e}matique de Jussieu---Paris Rive Gauche, B\^{a}timent Sophie Germain, Bureau 652\\
75205 PARIS CEDEX 13, FRANCE\\
Email address: zzzhangzhiyuan@gmail.com


\begin{thebibliography}{aaaa}




\bibitem{Au} H. Auerbach,
\newblock 
 Sur les groupes lin\'eaires born\'es (III),
\newblock {Studia Math.} 5 (1934) 43-49.


\bibitem{ACW}  
A. Avila, S. Crovisier, A. Wilkinson,
\newblock 
Diffeomorphisms with positive metric entropy,
\newblock {Publications Mathématiques de l'IH\'ES,} to appear.




\bibitem{AM}  A. Avila, M. Viana,
\newblock 
Extremal Lyapunov exponents: an invariance principle and applications,
\newblock {Invent. Math.}   181 (2010), no. 1, 115-189.




\bibitem{BC}  C. Bonatti, S. Crovisier,
\newblock 
R\'ecurrence et g\'en\'ericit\'e,
\newblock {Invent. Math.}  158 (2004), 33-104. 



\bibitem{BRH} A. Brown,  F. Rodriguez Hertz,
\newblock 
Measure rigidity for random dynamics on surfaces and related skew products, 
\newblock {J. Amer. Math. Soc,}  to appear.


\bibitem{BDPP} K. Burns, D. Dolgopyat, Y. Pesin, M. Pollicott,
\newblock 
Stable ergodicity for partially hyperbolic attractors with negative central exponents,
\newblock {J. Mod. Dyn.} 2 (2008) 63-81.






\bibitem{Do2} D. Dolgopyat,
\newblock 
 On differentiability of SRB states for partially hyperbolic systems, 
\newblock {Invent. Math.} 155 (2004) 389-449.


\bibitem{DK} D. Dolgopyat, R. Krikorian,
\newblock 
 On simultaneous linearization of diffeomorphisms of the sphere, 
\newblock {Duke Math. J. 136}, (2007) 475-505.


\bibitem{HPS} M. Hirsch, C. Pugh, M. Shub,
\newblock 
 Invariant manifolds, 
\newblock {Lect. Notes in Math. vol.583}, Springer Verlag, 1977.

\bibitem{KN} A. Koropecki , M. Nassiri,
\newblock 
Transitivity of generic semigroups of area-preserving surface diffeomorphisms,
\newblock {Mathematische Zeitschrift }, November 2010, Volume 266, Issue 3, pp 707-718.




\bibitem{LQ}  P.-D. Liu, M. Qian,
\newblock 
Smooth ergodic theory of random dynamical systems,
\newblock {Lecture Notes in Mathematics, vol. 1606, Springer-Verlag, Berlin,}1995.


\bibitem{NP}   M. Nassiri, Enrique Pujals,
\newblock 
Robust transitivity in Hamiltonian dynamics,
\newblock {Annales Scientifiques de l'Ecole Normale Supérieure,} 45 (2012), 191-239.



\bibitem{P}  Ya. Pesin,
\newblock 
Families of invariant manifolds corresponding to nonzero characterstic exponents,
\newblock {Math. USSR-Izv. 40} (1976), no. 6, 1261-1305.





\bibitem{PSW}  C. Pugh, M. Shub,  A. Wilkinson,
\newblock 
H\"older foliations, 
\newblock {Duke Math. J. 86}, No.3 (1997) 517-546. 

\end{thebibliography}
\end{document}